\newtheorem{theorem}{Theorem}[section]
\newtheorem{corollary}[theorem]{Corollary}
\newtheorem{lemma}[theorem]{Lemma}
\newtheorem{proposition}[theorem]{Proposition}
\newtheorem{definition-proposition}[theorem]{Definition-Proposition}
\theoremstyle{definition}
\newtheorem{definition}[theorem]{Definition}
\newtheorem{setting}[theorem]{Setting}
\newtheorem{remark}[theorem]{Remark}
\newtheorem{example}[theorem]{Example}
\numberwithin{equation}{section}
\renewcommand{\AA}{\mathcal{A}}
\newcommand{\CC}{\mathcal{C}}
\newcommand{\OO}{{\mathcal O}}
\newcommand{\UU}{\mathcal{U}}
\newcommand{\MM}{\mathcal{M}}
\renewcommand{\L}{\mathbb{L}}
\newcommand{\X}{\mathbb{X}}
\renewcommand{\P}{\mathbb{P}}
\newcommand{\Z}{\mathbb{Z}}
\newcommand{\mdots}{\begin{turn}{87}$\ddots$ \end{turn}}
\renewcommand{\c}{\vec c}
\newcommand{\y}{\vec y}
\newcommand{\x}{\vec x}
\newcommand{\z}{\vec z}
\newcommand{\w}{\vec{\omega}}
\newcommand{\cut}{\ar@{-}@[|(5)]}
\newcommand{\ca}{\operatorname{ca}\nolimits}
\newcommand{\Hom}{\operatorname{Hom}\nolimits}
\newcommand{\Dom}{\operatorname{Dom}\nolimits}
\newcommand{\End}{\operatorname{End}\nolimits}
\newcommand{\Ext}{\operatorname{Ext}\nolimits}
\renewcommand{\Im}{\operatorname{Im}\nolimits}
\newcommand{\coker}{\operatorname{coker}\nolimits}
\newcommand{\bo}{\operatorname{b}\nolimits}
\newcommand{\rk}{\operatorname{rk}\nolimits}
\newcommand{\gl}{\operatorname{gl}.\operatorname{dim}\nolimits}
\newcommand{\Lotimes}{\mathop{\stackrel{\mathbf{L}}{\otimes}}\nolimits}
\newcommand{\RHom}{\mathbf{R}\strut\kern-.2em\operatorname{Hom}\nolimits}
\newcommand{\Hhom}[3]{\mathcal{H}om_{#1}(#2,#3)}
\DeclareMathOperator{\add}{\mathsf{add}}
\DeclareMathOperator{\thick}{\mathsf{thick}}
\DeclareMathOperator{\CM}{\mathsf{CM}}
\DeclareMathOperator{\ACM}{\mathsf{ACM}}
\DeclareMathOperator{\moduleCategory}{\mathsf{mod}} \renewcommand{\mod}{\moduleCategory}
\DeclareMathOperator{\coh}{\mathsf{coh}}
\newcommand{\DDD}{\mathsf{D}}
\DeclareMathOperator{\lb}{\mathsf{line}}
\DeclareMathOperator{\vect}{\mathsf{vect}}
\DeclareMathOperator{\proj}{\mathsf{proj}}
\tikzset{every picture/.style={line width=0.75pt}} 
\begin{document}
	
	\title[ACM tilting bundles on a GL projective plane of type $(2,2,2,p)$]{ACM tilting bundles on a Geigle-Lenzing projective plane of type $(2,2,2,p)$}

\author[J. Chen,  S. Ruan and W. Weng] {Jianmin Chen,  Shiquan Ruan and Weikang Weng$^*$}

\thanks{$^*$ the corresponding author}
\makeatletter \@namedef{subjclassname@2020}{\textup{2020} Mathematics Subject Classification} \makeatother
	
	\subjclass[2020]{16G10, 14F08, 16G50, 18G80}
	\keywords{Geigle-Lenzing projective space, tilting bundle, arithmetically Cohen-Macaulay bundle, $2$-representation infinite algebra, weighted projective line}
	
	\begin{abstract} Let $\X$ be a Geigle-Lenzing projective plane of type $(2,2,2,p)$ and $\coh \X$  the category of coherent sheaves on $\X$. This paper is devoted to study ACM tilting bundles over $\X$, that is, tilting objects in the
	derived category $\DDD^{\bo}(\coh \X)$ that are also ACM bundles. We show that a tilting bundle consisting of line bundles is the $2$-canonical tilting bundle up to degree shift. We also provide a program to construct  ACM tilting bundles, which give a rich source of (almost) $2$-representation infinite algebras. As an application, we give a  classification result of  ACM tilting bundles. 
	\end{abstract}
	
	\maketitle
	
	\section{Introduction}	
	Geigle-Lenzing  projective spaces, introduced by Herschend, Iyama, Minamoto and Oppermann in \cite{HIMO}, are a higher dimensional generalization of Geigle-Lenzing weighted projective lines  \cite{GL}, and their representation theory was studied.	The study of Geigle-Lenzing projective spaces has a high contact with many mathematical subjects, including representation theory, singularity theory, (non-commutative) algebraic geometry and homological mirror symmetry, see e.g. \cite{CRW, FU, HIMO, IL, KLM, LO}. A different interpretation of the category of coherent sheaves on a Geigle-Lenzing  projective space is the module category of a Geigle-Lenzing order, see \cite{CI,RV} for the dimension one case and \cite{IL} for more general cases.

	The concept of tilting objects is essential for constructing derived equivalences. Tilting bundles and tilting complexes on weighted projective lines have been widely studied, see e.g. \cite{B,CLR1, CRZ,HR, J2, Me}. 
	The canonical tilting bundles \cite{GL} are a class of important tilting objects on weighted projective lines,  as their endomorphism algebras yield Ringel's canonical algebras \cite{Rin}. This result was later generalized to a higher dimensional version in \cite{HIMO}, where they constructed  a $d$-canonical tilting bundle on a Geigle-Lenzing  projective space, whose endomorphism algebra is a $d$-canonical algebra.  The squid tilting sheaves \cite{BKL} is also special on weighted projective lines.  Lerner and Oppermann in \cite{LO} presented a higher dimensional analog of them, whose endomorphism algebras are called $d$-squid algebras.

	The notion of $d$-representation infinite algebras was introduced in \cite{HIO}. This algebra is a  generalization of a hereditary representation infinite algebra, and  plays an important role in the context of higher Auslander-Reiten theory \cite{I1,I3}. However, fewer examples of $d$-representation infinite algebras have been calculated. At present, the main sources of such algebras arise from tensor product constructions \cite{HIO}, dimer models \cite{N}, skew-group algebras such as in  \cite{DG,Gi}, $d$-tilting bundle on Geigle-Lenzing  projective spaces \cite{HIMO} and (non-commutative) algebraic geometry \cite{M,MM} as well as via the action of higher APR-tilting \cite{MY}. Also, it is known that such an algebra can be obtained as the degree zero part of a bimodule $(d+1)$-Calabi-Yau algebra of Gorenstein parameter one \cite{AIR,Ke,MM}. 	
	Almost $d$-representation infinite algebras were introduced in \cite{HIMO}, which is a generalization of $d$-representation infinite algebras. A typical example is a $d$-canonical algebra as mentioned.

	Recently, we introduced the notion of $2$-extension bundles on a Geigle-Lenzing projective plane $\X$ with weight quadruple to study the category $\ACM \X$ of arithmetically Cohen-Macaulay (ACM) bundles and its stable category  $\underline{\ACM}\, \X$ in \cite{CRW}. Moreover, many properties of $2$-extension bundles have been studied. In particular, we proved that when $\X$ is of type $(2,2,2,p)$, each indecomposable ACM bundle is either a line bundle or a $2$-extension bundle. This motivates us to study  ACM tilting bundles on $\X$, that is,  tilting bundles that are also ACM bundles.  
	
	In this paper, we investigate ACM tilting bundles on $\X$ of type $(2,2,2,p)$, and  obtain a new family of (almost) $2$-representation infinite algebras. Our results can be viewed as a 
	higher dimensional generalization of the classification theorem on tilting bundle over a weighted projective line of type $(2,2,n)$, discussed in \cite{CLR1}.

	This paper is organized as follows. In Section \ref{sec:Preliminaries}, we recall some basic concepts and facts on  a   Geigle-Lenzing projective plane $\X$ of type $(2,2,2,p)$.  In Section \ref{sec:Indecomposable ACM bundles of rank four}, 
	we introduce the concepts of rigid domain and roof, together with their essential features, which is useful for  
	constructing ACM tilting bundles on $\X$. 	In Section \ref{sec:ACM tilting bundles on},  we  prove that a tilting bundle consisting of line bundles is the $2$-canonical tilting bundle  up to degree shift, see Theorem \ref{ACM tilting bundles consisting of line bundles}.  
	Furthermore, we provide a program to construct a family of ACM tilting bundles, which have the trichotomy form $T= (\bigoplus_{i\le n\le j}E_n) \oplus 	 V_{g,h}(E_i,E_j) \oplus S^{I}_{g,h}(E_i,E_j)$ for $0\le i\le j \le p-2$, see (\ref{form of tilting bundles}) for details. We show that such ACM tilting bundles are characterized by a certain condition among all ACM tilting bundles, see Theorem \ref{main theorem}.
	As an application, in Section \ref{sec:A classification theorem} we classify all ACM tilting bundles contained in the $2$-cluster tilting subcategory 
	of $\ACM \X$, arising from a $1$-tilting object in $\underline{\ACM}\, \X$. 
	In Section \ref{sec: Endomorphism algebras of}, we show that the endomorphism algebra of the tilting bundle $T$ is an  almost $2$-representation infinite algebra. In particular, if $i=0$ and $j=p-2$, then this gives arise to a  $2$-representation infinite algebra, see Theorem \ref{endomorphism algebra of T}. 
	
	\section{Preliminaries} \label{sec:Preliminaries}
	In this section, we recall some basic notions and facts appearing in this paper. We refer to \cite{I1,I3,J1} on higher homological algebras, and to \cite{HIMO,KLM} on Geigle-Lenzing hypersurfaces and projective spaces. Throughout this paper, we restrict our treatment to the case for dimension $d=2$, that is, Geigle-Lenzing  projective planes, and where the base field $\mathbf{k}$ is supposed to be algebraically closed. 
	
	\subsection{$2$-cluster tilting subcategories} Let us  recall the notion of functorially finite subcategory of an additive category.
	A full subcategory $\CC$ of an additive category $\AA$ is called a
	 \emph{contravariantly finite subcategory}  if for every object in $A\in\AA$, there exists a morphism
	$f:C\to A$ with $C \in \CC$ such that the induced morphism
	$$\Hom_{\AA}(C',C)\to\Hom_{\AA}(C',A)$$ is surjective for all $C'\in\CC$. Such a morphism $f$ is called a \emph{right $\CC$-approximation} of $A$. Dually we define  a \emph{left $\CC$-approximation} and a 
	\emph{covariantly finite subcategory}. We say that $\CC$ is \emph{functorially finite} if it is both contravariantly and covariantly finite.

	\begin{definition}(\cite[definition 4.13]{J1}). A full subcategory $\CC$  of an exact category  $\AA$ is called  ($2$-)\emph{cluster tilting subcategory} if the following conditions are satisfied.	
		\begin{itemize}
			\item [(a)] $\CC$ is a functorially finite subcategory of $\AA$ such that	
			\begin{eqnarray*}
				\CC=\{X\in\AA\mid \Ext_{\AA}^1(\CC,X)=0\}=\{X\in\AA\mid \Ext_{\AA}^1(X,\CC)=0\}.
			\end{eqnarray*}
			\item [(b)]  $\CC$ is a both generating and cogenerating subcategory of $\AA$, that is, for any object $A\in \AA$ there exists an epimorphism $C \to A$ with $C \in \CC$ and a  monomorphism  $ A \to C'$ with  $C' \in \CC$.
		\end{itemize} 
	\end{definition} 

	\subsection{Geigle-Lenzing hypersurfaces  of type $(2,2,2,p)$}  
	 Fix a quadruple $(2,2,2,p)$ with an integer $p\ge 2$, called \emph{weights}.  Consider the  $\mathbf{k}$-algebra
	$$R=\mathbf{k}[X_1, \dots, X_4]/(X_1^{2}+X_2^{2}+X_3^{2}+X_4^{p}).$$ Let $\L$ be the abelian group on generators $\x_1,\ldots,\x_4,\c$, subject to the relations $$2 \x_1= 2 \x_2=2 \x_3=p \x_4=:\c.$$ The element $\c$ is called the \emph{canonical element} of $\L$. Each element $\x$ in $\L$ can be uniquely written in a \emph{normal form} as 
	\begin{align}\label{equ:nor}
		\x=\sum_{i=1}^4\lambda_i\x_i+\lambda\c
	\end{align}
	with integers $ \lambda_1, \lambda_2, \lambda_3 \in \{0,1\}$, $0\le \lambda_4 < p $ and $\lambda\in\Z$. We can regard $R$ as an $\L$-graded $k$-algebra by setting $\deg X_i:=\x_i$ for any $i$, hence $R=\bigoplus_{\x\in \L } R_{\x}$, where $R_{\x}$ denotes the homogeneous component of degree $\vec{x}$. The pair $(R,\L)$ is called a $\emph{Geigle-Lenzing \emph{(}GL\emph{)} hypersurface}$ of type $(2,2,2,p)$.

	Let $\L_+$ be the submonoid of $\L$ generated by $\x_1,\ldots,\x_4,\c$. Then we equip $\L$ with the structure of a partially ordered set: $\x\le \y$ if and only if $\y-\x\in\L_+$. Each element $\x$ of $\L$ satisfies exactly one of the following two possibilities
	\begin{align}\label{two possibilities of x}
		\x\geq 0 \text{\quad or\quad}  \x\leq \vec{\omega}+2\c,
	\end{align}
	where  $\w:=\c-\sum_{i=1}^4 \x_i$ is called the \emph{dualizing element} of $\L$. We have $R_{\x} \neq 0$ if and only if $\x \geq 0$ if and only if $\ell \geq 0$ in its normal form in (\ref{equ:nor}). We say that a subset $I$ of
	$\L$ is $\emph{convex}$ if for any $\x,\y,\z\in \L$ such that $\x\le \y\le\z$ and $\x,\z \in I$, we have $\y \in I$. The interval in $\L$ is denoted by
	$[\x,\y] := \{\z \in\L \mid \x \le \z \le \y\}.$ 
	
	We denote by $\CM^{\L} R$ the category of $\L$-graded (maximal) Cohen-Macaulay $R$-modules, which is Frobenius, and its stable category $\underline{\CM}^{\L}R$ forms a triangulated category 
	by a general result of Happel \cite{Hap1}. 
	The following presents a basic property in Cohen-Macaulay representation theory. Here, we denote by $D$ the $\mathbf{k}$-dual, that is  $D(-):=\Hom_{\mathbf{k}}(-,\mathbf{k})$.
	\begin{theorem}[\cite{HIMO}] (Auslander-Reiten-Serre duality) There exists a functorial isomorphism for any $X,Y\in \underline{\CM}^{\L}R$:
		\begin{equation}\label{Auslander-Reiten-Serre duality CM}
			\Hom_{\underline{\CM}^{\L}R}(X,Y)\simeq D\Hom_{\underline{\CM}^{\L}R}(Y,X(\w)[2]).
		\end{equation}	
	\end{theorem}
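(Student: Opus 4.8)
The plan is to exhibit the functor $(\w)[2]$ as a Serre functor on the triangulated category $\underline{\CM}^{\L}R$ and then read off the stated isomorphism. First I would record the structural facts that make this possible. Since $R$ is a hypersurface, it is $\L$-graded Gorenstein of Krull dimension $3$, and its graded canonical module is $\omega_R = R(\w)$; this is precisely the content of calling $\w=\c-\sum_{i=1}^4\x_i$ the dualizing element, and it comes from the hypersurface adjunction formula (the $a$-invariant of $R$ is the degree $\c$ of the relation minus the sum $\sum_i\x_i$ of the degrees of the variables). Being Gorenstein, $\CM^{\L}R$ is a Frobenius category whose projective-injective objects are the graded free modules, so $\underline{\CM}^{\L}R$ is triangulated with shift $[1]=\Omega^{-1}$ the cosyzygy functor. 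I would also check that $R$ is an isolated singularity in the $\L$-graded sense --- that $R_{\mathfrak p}$ is regular for every homogeneous prime $\mathfrak p$ other than the irrelevant maximal ideal, equivalently that $\X$ is smooth --- which is a direct computation on $X_1^2+X_2^2+X_3^2+X_4^p$ and guarantees that $\underline{\CM}^{\L}R$ is $\Hom$-finite over $\mathbf{k}$ and admits Auslander-Reiten theory.

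With this in hand, the assertion is the $\L$-graded instance of Auslander-Reiten-Serre duality for maximal Cohen-Macaulay modules over a Gorenstein isolated singularity of Krull dimension $n$: there is a functorial isomorphism
\[
\underline{\Hom}_R(X,Y)\simeq D\,\underline{\Hom}_R\bigl(Y,\,(X\otimes_R\omega_R)[\,n-1\,]\bigr),
\]
so that the Serre functor is $\mathbb S=(-\otimes_R\omega_R)[\,n-1\,]$. Substituting $n=\dim R=3$ and $X\otimes_R\omega_R=X(\w)$ then yields exactly \eqref{Auslander-Reiten-Serre duality CM}. The engine behind this duality is graded local duality, $H^{n}_{\mathfrak m}(M)\simeq D\Hom_R(M,\omega_R)$ together with the vanishing $H^{i}_{\mathfrak m}(M)=0$ for $i<n$ on MCM modules, fed through the Frobenius structure to pass from module-level $\Ext$ to stable $\Hom$.

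The step requiring the most care, and the one I expect to be the main obstacle, is pinning the shift down as $[\,n-1\,]=[2]$ rather than $[\,n\,]=[3]$. This reflects the fact that the stable category of a Gorenstein isolated singularity of Krull dimension $n$ is $(n-1)$-Calabi-Yau up to the canonical twist: the cohomological degree $n$ produced by local duality must be traded for the cosyzygy shift, using that for MCM modules over a Gorenstein ring positive stable $\Ext$ agrees with ordinary $\Ext$. Tracking this conversion is the delicate bookkeeping. As a sanity check and an alternative route, one can instead obtain the formula geometrically: via Orlov's comparison between the graded singularity category $\underline{\CM}^{\L}R\simeq\DDD_{sg}^{\L}(R)$ and $\DDD^{\bo}(\coh\X)$, the duality is transported from Serre duality on the smooth projective plane $\X$, whose Serre functor is the twist by $\w$ shifted by $\dim\X=n-1=2$.
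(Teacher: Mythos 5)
The paper does not actually prove this statement: it is quoted from \cite{HIMO} as a known theorem, so there is no in-paper argument to compare against. Your outline is the standard proof and is correct in its essentials: $R$ is $\L$-graded Gorenstein of Krull dimension $3$ with $\omega_R\simeq R(\w)$ by the hypersurface adjunction, it is a graded isolated singularity (direct Jacobian check on $X_1^2+X_2^2+X_3^2+X_4^p$), and graded local duality plus the Frobenius structure of $\CM^{\L}R$ produce the Serre functor $(-\otimes_R\omega_R)[\dim R-1]=(\w)[2]$ on $\underline{\CM}^{\L}R$; your identification of the shift $[n-1]$ as the delicate bookkeeping step is exactly right. The one place I would add a caution is the proposed ``geometric'' sanity check via Orlov: $\underline{\CM}^{\L}R\simeq\underline{\ACM}\,\X$ arises as the stable (quotient) category of the Frobenius subcategory $\ACM\X\subset\coh\X$, its suspension is the cosyzygy functor rather than the complex shift, and it is not a priori a Serre-functor-stable full triangulated subcategory of $\DDD^{\bo}(\coh\X)$, so Serre duality on $\X$ does not transport verbatim without invoking Orlov's semiorthogonal decomposition in the Fano case; since you present that route only as a cross-check, it does not affect the validity of the main argument.
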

	
	\subsection{Geigle-Lenzing projective planes of type $(2,2,2,p)$}
	The category of $\emph{coherent sheaves on Geigle-Lenzing (GL) projective plane}$ ${\X}$ is defined as the quotient category
	\[ \coh \X :=\mod^{\L}R/\mod_0^{\L}R\]
	of the category of finitely generated $\L$-graded  $R$-modules $\mod^{\L}R$ modulo its Serre subcategory $\mod^{\L}_0R$ consisting of finite dimensional $R$-modules. 
	
	Denote by $\pi:\mod^{\L}R\to\coh\X$ the natural functor, also called \emph{sheafification}. The object $\OO:=\pi(R)$ is called the \emph{structure sheaf} of $\X$. We recall a list of fundamental properties of  $\coh\X$ and its derived category $\DDD^{\bo}(\coh\X)$.
	
	\begin{theorem}[\cite{HIMO}] \label{basic properties} The category $\coh\X$ has the following properties:
		\begin{itemize}
			\item[(a)] $\coh\X$ is a Noetherian abelian category of global dimension $2$.
			\item[(b)] $\Hom_{\DDD^{\bo}(\coh\X)}(X,Y)$ is
			a finite dimensional $\mathbf{k}$-vector space for all $X,Y\in\DDD^{\bo}(\coh\X)$. In particular, $\coh\X$ is Ext-finite.
			\item[(c)] For any $\x,\y \in \L$ and $i\in\Z$,  
			\begin{align}\label{extension spaces between line bundles}
				\Ext_{\X}^i(\OO(\x),\OO(\y))=\left\{
				\begin{array}{ll}
					R_{\y-\x}&\mbox{if $i=0$,}\\
					D(R_{\x-\y+\w})&\mbox{if $i=2$,}\\
					0&\mbox{otherwise.}
				\end{array}\right.
			\end{align}
			\item[(d)] (Auslander-Reiten-Serre duality) There exists a functorial isomorphism for any $X, Y\in\DDD^{\bo}(\coh\X) \mathsf{:}$
			\begin{equation}\label{Auslander-Reiten-Serre duality}
				\Hom_{\DDD^{\bo}(\coh\X)}(X,Y)\simeq D\Hom_{\DDD^{\bo}(\coh\X)}(Y,X(\w)[2]).
			\end{equation}
			In other words, $\DDD^{\bo}(\coh\X)$ has a Serre functor $(\w)[2]$.	
		\end{itemize}
	\end{theorem}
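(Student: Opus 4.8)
The plan is to reduce every assertion to homological properties of the $\L$-graded coordinate algebra $R$, exploiting that $R=\mathbf{k}[X_1,\dots,X_4]/(X_1^{2}+X_2^{2}+X_3^{2}+X_4^{p})$ is a Noetherian hypersurface, hence Cohen-Macaulay of Krull dimension $3$, and that its Jacobian ideal cuts out a singular locus supported only at the irrelevant maximal ideal $\mathfrak m$. Thus $\operatorname{Spec}R\setminus\{\mathfrak m\}$ is regular of dimension $3$, and the GL projective plane $\X$, obtained from it by dividing out the $\L$-grading, behaves like a smooth projective stack of dimension $d=2$ whose category of coherent sheaves is $\coh\X=\mod^{\L}R/\mod^{\L}_0R$.

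For (a), I would inherit Noetherianity of $\coh\X$ from $\mod^{\L}R$: since $R$ is Noetherian the category $\mod^{\L}R$ is abelian Noetherian, and a Serre quotient of a Noetherian abelian category is again abelian Noetherian. For the global dimension, regularity of the complement of $\{\mathfrak m\}$ shows that every coherent sheaf is locally resolved in length $d=2$, so $\gl\coh\X=2$. For (b), the finiteness of $\Hom_{\DDD^{\bo}(\coh\X)}(X,Y)$ is the graded analogue of Serre's finiteness theorem: a finitely generated $\L$-graded module sheafifies to an object with finite-dimensional cohomology over the proper stack $\X$, and this propagates to all $\Hom$ and $\Ext$ groups in the bounded derived category, whence $\coh\X$ is $\Ext$-finite.

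For (c) and (d) the decisive input is that $R$ is $\L$-graded Gorenstein with dualizing element exactly $\w$. Applying $\Hom$ into the canonical module to the defining sequence $0\to P(-\c)\xrightarrow{f}P\to R\to 0$ over $P=\mathbf{k}[X_1,\dots,X_4]$, and using $\omega_P\cong P(-\sum_{i}\x_i)$, I would obtain $\omega_R\cong R(\c-\sum_{i=1}^4\x_i)=R(\w)$, which identifies the Gorenstein parameter with $\w$ and shows that $\omega_R$ sheafifies to $\OO(\w)$. Feeding this into graded local/Serre duality on the $2$-dimensional stack $\X$ produces a Serre functor $(-)(\w)[2]$ on $\coh\X$, and this extends to $\DDD^{\bo}(\coh\X)$ by the standard dévissage for an $\Ext$-finite category of finite global dimension; this is precisely (d). Granting (d), assertion (c) follows by direct computation. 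For $i=0$ a homogeneous morphism $\OO(\x)\to\OO(\y)$ is a degree-$(\y-\x)$ element of $R$, so $\Hom_{\X}(\OO(\x),\OO(\y))=R_{\y-\x}$. The case $i=2$ is the Serre dual $\Ext^2_{\X}(\OO(\x),\OO(\y))\cong D\Hom_{\X}(\OO(\y),\OO(\x)(\w))=D(R_{\x-\y+\w})$. Finally, the vanishing for $i=1$ (and for $i\ge 3$) reflects that the local cohomology of the depth-$3$ Cohen-Macaulay ring $R$ is concentrated in top degree $3$, so the sheaf cohomology of every line bundle $\OO(\x)$ lives only in degrees $0$ and $2$.

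The hard part will be the precise determination of the dualizing element $\w$ together with the cohomological shift $[2]$: the Gorenstein-parameter computation above and the accompanying local-cohomology analysis forcing cohomology of line bundles into degrees $0$ and $2$ only. This single point simultaneously pins down the Serre functor $(\w)[2]$ of (d), the exact twist $\w$ in the $i=2$ case of (c), and the vanishing of $\Ext^1$ between line bundles; everything else in the statement is formal once these are in hand.
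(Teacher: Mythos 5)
The paper offers no proof of this statement: it is imported wholesale from \cite{HIMO}, so the only ``in-paper proof'' is the citation, and your attempt has to be measured against the argument in that source. For (b), (c) and (d) your sketch reproduces that argument essentially correctly: the identification $\omega_R\cong R(\c-\sum_i\x_i)=R(\w)$ from the length-one resolution of the hypersurface over $\mathbf{k}[X_1,\dots,X_4]$, the concentration of graded local cohomology of the depth-$3$ Cohen--Macaulay ring $R$ in top degree, and graded local duality are exactly the inputs that give $\Hom(\OO(\x),\OO(\y))=R_{\y-\x}$, the vanishing of $\Ext^1$ and $\Ext^{\geq 3}$ between line bundles, $\Ext^2=D(R_{\x-\y+\w})$, and then the Serre functor $(\w)[2]$ on all of $\DDD^{\bo}(\coh\X)$ by d\'evissage over the line bundles, which generate. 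The one genuinely soft spot is your justification of $\gl\coh\X=2$ in (a). Deducing it from regularity of $\operatorname{Spec}R\setminus\{\mathfrak m\}$ has two problems: the Jacobian criterion for $X_1^{2}+X_2^{2}+X_3^{2}+X_4^{p}$ fails in characteristic $2$ or in characteristic dividing $p$ (the paper imposes no characteristic hypothesis), and even in good characteristic one must pass from the $3$-dimensional regular punctured spectrum to the $2$-dimensional quotient by $\operatorname{Spec}\mathbf{k}[\L]$, so ``locally resolved in length $2$'' needs an argument about the quotient stack, not just about the local rings of $R$. The proof in \cite{HIMO} avoids this entirely and you already have the tools for it: the same local cohomology computation you invoke for (c) shows $\Ext^{i}(\OO(\x),-)=0$ on $\coh\X$ for $i>2$, and since every coherent sheaf is an epimorphic image of a finite direct sum of line bundles, dimension shifting gives $\Ext^{>2}(X,Y)=0$ for all $X,Y$; exactness of the bound then follows from the nonvanishing $\Ext^2$ in (c). Replacing your geometric step for (a) by this purely algebraic one closes the gap; the rest of the outline is the standard argument.
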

	
		Denote by $\vect \X$ the full subcategory of $\coh \X$ formed by all vector bundles 
	and by $\lb\X$ its full subcategory  of finite direct sums of 
	line bundles $\OO(\x)$ with $\x\in \L$. The sheafification  $\pi:\mod^{\L}R\to\coh\X$ restricts to a fully faithful 
	functor $\CM^{\L}R\to\vect\X$ and further induces an equivalence $\CM^{\L}R \xrightarrow{\sim} \pi(\CM^{\L} R)$.  In the context of projective geometry (e.g. \cite{CH,CMP}), the objects in $\pi(\CM^{\L} R)$ are called \emph{arithmetically Cohen-Macaulay} (\emph{ACM}) \emph{bundles}. Denote by $\ACM \X:=\pi(\CM^{\L} R)$. The following gives a description of $\ACM \X$ in terms of vector bundles by \cite{HIMO}.
	\begin{align}\label{ACM bundle eq.}
		\ACM \X&=
		\{X\in\vect\X\mid \Ext_{\X}^1(\OO(\x),X)=0 \text{ for any} \ \x\in\L\}\\
		&=\{X\in\vect\X\mid \Ext_{\X}^1(X,\OO(\x))=0\text{ for any} \ \x\in\L\}\nonumber. 
	\end{align}			
	Notice that all line bundles serve as the indecomposable projective-injective objects in $\ACM \X$ by (\ref{ACM bundle eq.}). Moreover, $\pi$ restricts to an equivalence $\proj^{\L}R\xrightarrow{\sim}\lb\X$ with the $\L$-graded  indecomposable projective module $R(\x)$ corresponding to the line bundle $\OO(\x)$. Therefore, this turns $\ACM \X$ into a Frobenius category and thus its stable category $\underline{\ACM} \, \X$ is triangle equivalent to $\underline{\CM}^{\L}R$. 
	
	For convenient, we denote ${\Ext}^{i}_{\coh \X}(-,-)$ by ${\Ext}^{i}(-,-)$ for $i\ge 0$, and denote ${\Hom}_{\underline{\ACM}\, \X}(-,-)$ by  $\underline{\Hom}(-,-)$.
	\subsection{$2$-tilting bundles and Grothendieck groups} Let $\mathcal{T}$ be a triangulated category with a suspension functor $[1]$. 
	
	\begin{definition} A basic object $V \in \mathcal{T}$ is called \emph{tilting} in $\mathcal{T}$ if
		\begin{itemize}
			\item[(a)] $V$ is \emph{rigid}, that is $\Hom_{\mathcal{T}}(V,V[i])=0$ for all $i \neq 0$.
			\item[(b)] $\thick V=\mathcal{T}$, where $\thick V$ denotes by the smallest thick subcategory of $\mathcal{T}$ containing $V$.		
		\end{itemize}	
	 If moreover $\End_{\mathcal{T}} (V)$ has global dimension at most $2$,  such a $V$ is called \emph{$2$-tilting} in $\mathcal{T}$.
	 A tilting object $V$ in  $\DDD^{\bo}(\coh \X)$ is called a \emph{tilting bundle} (resp.  \emph{ACM tilting bundle}) on $\X$ if $V \in \vect \X$ (resp. $\ACM \X$).
	\end{definition}	
	
	\begin{definition}(\cite[Definition 7.16]{HIMO})\label{def.slice}  Let $\UU$ be a $2$-cluster tilting subcategory of $\vect \X$. We call an object $V \in \UU$ $\emph{slice}$ in $\UU$ if the following conditions are satisfied.
		\begin{itemize}
			\item[(a)] ${\UU}=\add\{ {V} (\ell\w)\mid\ell\in\Z\}$.
			\item[(b)] $\Hom_{}({V},{V}(\ell\w))=0$ for any $\ell>0$.
		\end{itemize}
		In this case, $\add V$ meets each $\w$-orbit of indecomposable objects in $\UU$ exactly once.		
	\end{definition}

	It follows from a general result in \cite{HIMO} that a $2$-tilting bundle $V$ on $\X$ induces the $2$-cluster tilting subcategory ${\UU}:= \add \{V(\ell\w) \mid \ell \in \Z \}$ of $\vect \X$. Moreover,	the following theorem gives a more explicit expression between  $2$-tilting bundles and $2$-cluster tilting subcategories, which will be used frequently later.
	
	\begin{theorem}\emph{(}\cite[Theorem 7.17]{HIMO}\emph{)} \label{tilting-cluster tilting}
		Let $\X$ be a GL projective plane. Then $2$-tilting bundles on $\X$ are precisely
		slices in $2$-cluster tilting subcategories of $\vect\X$.
	\end{theorem}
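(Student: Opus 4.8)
The plan is to prove the two inclusions separately, with a single Serre-duality computation driving both. Write $\Lambda=\End V$, and whenever $V$ is a tilting object use the resulting derived equivalence $F=\RHom(V,-)\colon\DDD^{\bo}(\coh\X)\xrightarrow{\sim}\DDD^{\bo}(\mod\Lambda)$ with $F(V)=\Lambda$. Under $F$ the Serre functor $(\w)[2]$ corresponds to the Nakayama functor $\nu=D\Lambda\Lotimes_\Lambda-$, so the twist $(\w)$ corresponds to $\nu_2:=\nu[-2]$ and $F(V(\ell\w))\cong\nu_2^\ell\Lambda$; hence
\[
\Ext^i(V,V(\ell\w))\cong\Hom_{\DDD^{\bo}(\mod\Lambda)}(\Lambda,\nu_2^\ell\Lambda[i])\cong H^i(\nu_2^\ell\Lambda).
\]
This identity, together with Auslander-Reiten-Serre duality \eqref{Auslander-Reiten-Serre duality}, is the computational engine for everything below.

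\emph{($2$-tilting $\Rightarrow$ slice.)} Given a $2$-tilting bundle $V$, the cited general result already furnishes the $2$-cluster tilting subcategory $\UU:=\add\{V(\ell\w)\mid\ell\in\Z\}$, so condition (a) of a slice holds by definition and only the vanishing in (b) remains. Here I would use an amplitude estimate: since $\gl\Lambda\le2$, the bimodule $D\Lambda$ has flat dimension at most $2$, so the hyper-Tor spectral sequence shows that for any complex $M$ the lowest cohomological degree of $\nu_2M=(D\Lambda\Lotimes_\Lambda M)[-2]$ is at least that of $M$. As $\nu_2\Lambda=D\Lambda[-2]$ sits in degree $2$, induction gives that $\nu_2^\ell\Lambda$ is concentrated in degrees $\ge2$ for every $\ell\ge1$, so $H^0(\nu_2^\ell\Lambda)=0$; by the displayed identity this is exactly $\Hom(V,V(\ell\w))=0$ for $\ell>0$, which is (b). (The case $\ell=1$ is already forced by rigidity via $\Hom(V,V(\w))\cong D\Hom(V,V[2])=0$, a useful anchor.)

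\emph{(slice $\Rightarrow$ $2$-tilting.)} Conversely let $\UU$ be $2$-cluster tilting in $\vect\X$ and $V\in\UU$ a slice. Rigidity is immediate: $\Ext^i(V,V)=0$ for $i<0$ or $i>2$ since $\gl\coh\X=2$, while $\Ext^1(V,V)=0$ because $\Ext^1(\UU,\UU)=0$, and $\Ext^2(V,V)\cong D\Hom(V,V(\w))=0$ by (b) at $\ell=1$. For generation, note that $(\w)$ is an autoequivalence, so it suffices to place $V(\pm\w)$ in $\thick V$, which follows from the length-$\le2$ $\add V$-resolutions of objects of $\UU$ supplied by the $2$-cluster tilting/slice structure; as $\UU$ generates $\vect\X$ and $\vect\X$ generates $\DDD^{\bo}(\coh\X)$, we get $\thick V=\DDD^{\bo}(\coh\X)$, so $V$ is a tilting object and $\gl\Lambda<\infty$ by smoothness of $\coh\X$. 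To pin the bound I would compute $\nu_2^{-1}\Lambda$: for $i\neq0$, duality gives $H^i(\nu_2^{-1}\Lambda)\cong\Ext^i(V,V(-\w))\cong D\,\Ext^{2-i}(V,V(2\w))=D\,H^{2-i}(\nu_2^2\Lambda)$, which vanishes because $\nu_2^2\Lambda$ is concentrated in degree $2$ (its $H^0$ and $H^1$ vanish by (b) at $\ell=2$ and by $\Ext^1(\UU,\UU)=0$, the rest by $\gl\coh\X=2$). Thus $\nu_2^{-1}\Lambda\cong\RHom_\Lambda(D\Lambda,\Lambda)[2]\in\mod\Lambda$, i.e. $\Ext^i_\Lambda(D\Lambda,\Lambda)=0$ for $i\neq2$; combined with the elementary fact $\gl\Lambda=\max\{i\mid\Ext^i_\Lambda(D\Lambda,\Lambda)\neq0\}$ (every simple embeds in the socle of $D\Lambda$ and the top $\Ext(-,\Lambda)$ is right exact), this yields $\gl\Lambda\le2$, so $V$ is $2$-tilting.

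The two cohomological-concentration computations are the clean conceptual core, and they run on the same duality identity. The step I expect to be most delicate to make fully rigorous is generation in the converse, namely extracting the finite $\add V$-resolutions of $V(\pm\w)$ from the slice structure (the higher analogue of ``a slice in the preprojective component is tilting''), and, relatedly, justifying $\gl\Lambda=\max\{i\mid\Ext^i_\Lambda(D\Lambda,\Lambda)\neq0\}$; these are where I would concentrate the detailed work.
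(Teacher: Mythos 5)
The paper does not prove this statement at all --- it is imported verbatim from [HIMO, Theorem 7.17] --- so there is no internal proof to compare against; I can only judge your argument on its own terms. The direction ``$2$-tilting $\Rightarrow$ slice'' is essentially correct: you may legitimately quote the fact that $\UU=\add\{V(\ell\w)\mid\ell\in\Z\}$ is $2$-cluster tilting (the paper itself states this as a separate prior result from \cite{HIMO}), and your amplitude argument is sound --- $D\Lambda$ has projective dimension at most $2$ as a right module, so $\nu_2$ does not lower the infimum of cohomological degrees, whence $\nu_2^{\ell}\Lambda$ is concentrated in degrees $\ge 2$ for $\ell\ge1$ and $H^0(\nu_2^{\ell}\Lambda)\cong\Hom(V,V(\ell\w))=0$. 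In the converse, the rigidity check and the final global-dimension bound (via $\nu_2^2\Lambda$ being concentrated in degree $2$ and $\gl\Lambda=\max\{i\mid\Ext^i_\Lambda(D\Lambda,\Lambda)\neq0\}$, a fact the paper also invokes from [HIMO, Observation 2.7]) are correct \emph{once $V$ is known to be a tilting object}.

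The genuine gap is exactly the step you flagged: generation in the converse. The $2$-cluster tilting structure does give two-term $\add\UU$-resolutions of arbitrary vector bundles (a right $\UU$-approximation is epi since $\UU$ is generating, and its kernel lies in $\UU$ by the Ext-orthogonality characterization), so $\thick\UU=\DDD^{\bo}(\coh\X)$ is unproblematic. What is missing is the inclusion $\UU\subseteq\thick V$, i.e. that each twist $V(\ell\w)$ admits a finite resolution by objects of $\add V$. The slice axioms only provide Hom-vanishing in one direction along $\w$-orbits; they do not by themselves produce any exact sequences linking $V(\ell\w)$ to $\add V$, and the ``length-$\le2$ $\add V$-resolutions supplied by the slice structure'' that you appeal to are precisely what has to be constructed. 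This is the real content of ``slice $\Rightarrow$ tilting'' (the higher analogue of the fact that a section of a preprojective component is a tilting module), and in \cite{HIMO} it is obtained from the structure theory of the associated $2$-preprojective algebra rather than from a formal cluster-tilting argument. Until that step is supplied, the converse direction does not close, and the subsequent global-dimension computation (which presupposes $\thick V=\DDD^{\bo}(\coh\X)$ and hence $\gl\Lambda<\infty$) remains conditional.
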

	
	Let $K_0(\coh \X)$ be the Grothendieck group of $\coh \X$. We denote by $[X]$ the corresponding element in $K_0(\coh \X)$. It was shown in \cite{HIMO} that $K_0(\coh \X)$ is a free abelian group  with a  basis $[\OO(\x)]$ for $0 \le \x \le 2\c$. Further, the rank of $K_0(\coh \X)$ equals $5p+7$. The \emph{rank functor} {\rm rk}: $K_0(\coh \X) \to \Z$ is the additive function uniquely determined by $${\rm rank}\,([\OO(\x)])=1\  \text{for any}\  \x\in \L.$$ 	
	
	Recall that there are no nonzero morphisms from torsion sheaves (that is, rank zero objects in $\coh \X$) 
	to vector bundles by a general result of \cite[Proposition 3.28]{HIMO}.

		\subsection{Indecomposable rank-four ACM bundles} Let us recall the notions of $2$-(co)extension bundles and $2$-(co)Auslander bundles \cite{CRW}.	Let $L$ be a line bundle and $\x:=\lambda\x_4$ with an integer $0\le \lambda \le p-2$. We set
	\begin{align*}
		L\langle \x \rangle&:=\big(\bigoplus_{1\le i \le 3}L(\x-\x_i) \big) \oplus  L(-\x_4), \\ 
		L[\x]&:=\big(\bigoplus_{1\le i \le 3}L(\w+\x_i) \big)  \oplus  L(\w+\x+\x_4).
	\end{align*}
	Consider  short exact sequences
	\begin{align*}
		\eta_{1}: \ 0 \to K \xrightarrow{} L\langle \x \rangle \xrightarrow{\gamma} L(\x) \to 0 \ \text{ and } \
		\mu_{1}: \ 0 \to L(\w) \xrightarrow{\gamma'} L[\x] \to C \to 0
	\end{align*}
	in $\coh \X$, where  the morphisms $\gamma=\gamma':=((X_i)_{i=1}^{3}, X_4^{\lambda+1})$.  
	Applying $\Hom(-,L(\w))$ (resp. $\Hom(L(\x),-)$) to the  sequence $\eta_{1}$ (resp. $\mu_{1}$), one can check that
	$$\Ext^{1}(K,L(\w)) =\mathbf{k}\ \ (\text{resp.} \ \Ext^{1}(L(\x),C)=\mathbf{k}).$$ Let  
	$\eta_{2}:0\to L(\w) \xrightarrow{\alpha} E \xrightarrow{} K \to 0  \text{ (resp.  }  \mu_2: 0\to C \to F \xrightarrow{\alpha'} L(\x)  \to 0)$
	be a nonsplit exact sequence in $\coh \X$. By connecting these two sequences $\eta_{1}$ and $\eta_{2}$ (resp. $\mu_{1}$ and $\mu_{2}$), we finally obtain an exact sequence 
	\begin{align}
		\eta:& \quad	0 \to L(\w) \xrightarrow{\alpha}  E 
		\xrightarrow{\beta} 
		L\langle \x \rangle \xrightarrow{\gamma} L(\x) \to 0 \label{2-extension bundle exact} \\ 
		(\text{resp.} \ \mu:& \quad	0 \to L(\w)\xrightarrow{\gamma'} L[\x] \xrightarrow{\beta'} F \xrightarrow{\alpha'} L(\x) \to 0)
		\label{2-coextension bundle exact} 
	\end{align}
	in $\coh \X$. In this case,
	the term $E_L \langle \x \rangle:=E$ (resp. $F_L \langle \x \rangle:=F$)  of the sequence, which is uniquely determined up to isomorphism, is called the  \emph{$2$-extension bundle} (resp. \emph{$2$-coextension bundle})
	given by the data $(L,\x)$. If $L=\OO$, then we just write $E\langle \x \rangle$ (resp. $F\langle \x \rangle$).

	For $\x=0$, the morphism $ L(\w) \xrightarrow{\alpha}  E_L\langle 0 \rangle$ (resp. $F_L\langle 0 \rangle \xrightarrow{\alpha'} L(\x)$) is a left (resp. right) minimal almost split morphism in $\ACM \X$, and the
	term $E_L:=E_L\langle 0 \rangle$ (resp. $F_L:=F_L\langle 0 \rangle$) is called the 
	\emph{$2$-Auslander bundle} (resp. \emph{$2$-coAuslander bundle}) associated with $L$. 						 		
	We recall the following properties of  $2$-extension bundles. 
	
	\begin{proposition}[\cite{CRW}] \label{L-action on 2-extension bundles}
		Let  $0\le \x \le (p-2)\x_4$. The following assertions hold.
		\begin{itemize}
			\item[(a)]  We have $ F_{L}\langle \x \rangle\simeq E_{L}\langle \x \rangle[1]$.
			\item[(b)]   For any $\y\in\L$, we have $(E_{L}\langle \x \rangle)(\y)\simeq E_{L(\y)}\langle \x \rangle$.				
			\item[(c)] 	For any $1\le i \le 3$, there are isomorphisms
			$$  E_{L}\langle \x \rangle[1]\simeq E_{L}\langle \x \rangle(\x_i) \simeq E_{L}\langle (p-2)\x_4-\x \rangle(\x+\x_4).$$ 
		\end{itemize}
		Here, $[1]$ denotes the suspension functor in $\underline{\ACM} \, \X$.
	\end{proposition}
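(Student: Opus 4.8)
The plan is to establish (b) first and then observe that all three statements are equivariant under the twist autoequivalences, so that one may assume $L=\OO$ throughout. For (b), I would apply the exact autoequivalence $(\y)\colon\coh\X\to\coh\X$ to the defining four-term exact sequence $\eta$ of $E_L\langle\x\rangle$. Since $(\y)$ is exact and additive, it sends $\eta$ to an exact sequence $0\to L(\w+\y)\to E_L\langle\x\rangle(\y)\to L\langle\x\rangle(\y)\to L(\x+\y)\to 0$, and I would check termwise that this is exactly the defining sequence of $E_{L(\y)}\langle\x\rangle$: the outer terms are $L(\y)(\w)$ and $L(\y)(\x)$, the middle sum of line bundles is $(L(\y))\langle\x\rangle$, and the maps are still given by the same matrix $((X_i)_{i=1}^{3},X_4^{\lambda+1})$, because a twist only shifts degrees. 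As $(\y)$ is an equivalence it preserves the nonsplitness of the inner extension $\eta_2$ and the one-dimensionality of the controlling $\Ext^1$, so the uniqueness up to isomorphism of the $2$-extension bundle forces $E_L\langle\x\rangle(\y)\cong E_{L(\y)}\langle\x\rangle$. The same argument on the coextension data gives $F_L\langle\x\rangle(\y)\cong F_{L(\y)}\langle\x\rangle$; and because $(\y)$ permutes line bundles, it preserves the projective--injectives of $\ACM\X$ and hence commutes with the Frobenius suspension $[1]$ of $\underline{\ACM}\,\X$. This is what lets me reduce (a) and (c) to the case $L=\OO$.

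For (a), writing $L=\OO(\ell)$ and using (b), it suffices to produce a conflation $0\to E_L\langle\x\rangle\to P\to F_L\langle\x\rangle\to 0$ in the Frobenius category $\ACM\X$ with $P\in\add\lb\X$, which yields $F_L\langle\x\rangle\cong E_L\langle\x\rangle[1]$ at once. The two defining four-term sequences $\eta$ and $\mu$ share their outer terms $L(\w)$ on the left and $L(\x)$ on the right, so I would splice them along these terms --- via the pushout of $E\xleftarrow{\alpha}L(\w)\xrightarrow{\gamma'}L[\x]$ and the pullback of $F\xrightarrow{\alpha'}L(\x)\xleftarrow{\gamma}L\langle\x\rangle$ --- and assemble the resulting short exact sequences into the desired conflation. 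A more conceptual alternative is to dualize: applying $\Hhom{}{-}{\OO}$ to $\eta$ (exact on these locally free terms, since the intermediate $K$ is a vector bundle) reverses it into $0\to L(\x)^{*}\to L\langle\x\rangle^{*}\to E_L\langle\x\rangle^{*}\to L(\w)^{*}\to 0$, a sequence of the $\mu$-shape, so that $\Hhom{}{E_L\langle\x\rangle}{\OO}\cong F_{M}\langle\x\rangle$ with $M=L^{*}(-\x-\w)$. One would then identify the suspension by combining this self-duality interchanging the extension and coextension constructions with the Auslander--Reiten--Serre duality with Serre functor $(\w)[2]$.

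For (c), by (a) and (b) the first isomorphism $E_L\langle\x\rangle[1]\cong E_L\langle\x\rangle(\x_i)$ is equivalent to $F_L\langle\x\rangle\cong E_{L(\x_i)}\langle\x\rangle$ in $\underline{\ACM}\,\X$. Here I would exploit that $R=\mathbf{k}[X_1,\dots,X_4]/(f)$ is a hypersurface with $\deg f=\c$, so that $\underline{\ACM}\,\X\cong\underline{\CM}^{\L}R$ is $2$-periodic and $(\c)\cong[\pm 2]$; since $2\x_i=\c$, the twist $(\x_i)$ is a square root of a shift, consistent with $(\x_i)\cong[1]$, and the remaining content is to pin down that it is the correct square root on these bundles. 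I would do this through an explicit conflation built from multiplication by the section $X_i\in R_{\x_i}$ together with the symmetry among the three weight-$2$ variables $X_1,X_2,X_3$. For the second isomorphism I observe $(p-2)\x_4-\x=(p-2-\lambda)\x_4$ and $\x+\x_4=(\lambda+1)\x_4$, and I would compare the defining four-term sequence of $E_{L(\x+\x_4)}\langle(p-2)\x_4-\x\rangle$ with that of $E_{L(\x_i)}\langle\x\rangle$ after twisting, reading off the isomorphism from the reflection symmetry $\lambda\leftrightarrow p-2-\lambda$ of the exponent in $\gamma=((X_i)_{i=1}^3,X_4^{\lambda+1})$.

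The main obstacle is (a), together with the stable isomorphisms feeding (c). Although all four outer and middle terms of $\eta$ and $\mu$ are ACM, the intermediate sheaves $K=\ker\gamma$ and $C=\coker\gamma'$ are vector bundles that are \emph{not} ACM, so the four-term sequences are genuine $2$-extensions and do not split into conflations inside $\ACM\X$. Consequently one cannot simply read off triangles in $\underline{\ACM}\,\X$ from them; the real work is to reorganize these $2$-extensions into honest conflations with projective--injective (line bundle) middle terms, equivalently to control the finite-length discrepancy between $\coh\X$ and the stable category $\underline{\ACM}\,\X$.
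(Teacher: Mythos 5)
First, note that the paper does not prove this proposition: it is quoted from \cite{CRW} with only a citation, so there is no internal argument to compare yours against, and I can only judge the proposal on its own terms. Part (b) is complete and correct: the degree-shift functor is an exact autoequivalence, it carries the defining sequence $\eta$ of $E_L\langle\x\rangle$ termwise to the defining sequence of $E_{L(\y)}\langle\x\rangle$, and it preserves non-splitness and the one-dimensionality of the controlling $\Ext^1$, so uniqueness finishes the argument. The identification $E_L\langle\x\rangle^{\vee}\simeq F_{L^{\vee}(-\x-\w)}\langle\x\rangle$ obtained by dualizing $\eta$ is also correct and is a genuinely useful partial result.

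The gaps are in (a) and (c), and you have in effect named them yourself without closing them. For (a), the pushout of $E\xleftarrow{\alpha}L(\w)\xrightarrow{\gamma'}L[\x]$ and the pullback of $F\xrightarrow{\alpha'}L(\x)\xleftarrow{\gamma}L\langle\x\rangle$ produce objects that are extensions involving the non-ACM sheaves $K$ and $C$; nothing in this construction forces the middle term of the desired conflation $0\to E\to P\to F\to 0$ to be a sum of line bundles, and without that you obtain no triangle in $\underline{\ACM}\,\X$. The alternative route ``combine the duality with the Serre functor $(\w)[2]$'' is a one-sentence gesture: $(-)^{\vee}$ is contravariant and inverts the suspension, so extracting the single shift $[1]$ from it requires a computation that is not supplied. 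For (c), the $2$-periodicity of the graded hypersurface singularity category gives at most $(\c)\simeq[2]$; that $(\x_i)$ is the \emph{correct} square root of the shift on these particular bundles is precisely the assertion to be proved, and ``an explicit conflation built from multiplication by $X_i$'' is the missing proof rather than a reduction of it. Likewise the second isomorphism in (c) cannot be read off by comparing defining sequences: after applying (b), the sequences presenting $E_{L(\x_i)}\langle\x\rangle$ and $E_{L(\x+\x_4)}\langle(p-2)\x_4-\x\rangle$ have different outer and middle terms (for instance the right-hand ends are $L(\x_i+\x)$ versus $L(\c-\x_4)$), so the reflection $\lambda\leftrightarrow p-2-\lambda$ does not match them termwise, and a separate argument (uniqueness of the rank-four indecomposable at a given position in the Auslander--Reiten quiver, its class in $K_0$, or an explicit matrix factorization) is needed. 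As it stands the proposal establishes (b) and part of the duality input to (a), but defers the substantive content of (a) and (c).
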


	According to \cite[Theorem 6.1]{CRW} (see also \cite[Theorem 4.76]{HIMO}),  $T_{\rm cub}:=\bigoplus_{ \ell=0}^{p-2} E\langle \ell\x_4 \rangle$ is a tilting object in $\underline{\ACM}\, \X$, whose endomorphism algebra  $\underline{\End}(T_{\rm cub})^{}\simeq  \mathbf{k}\vec{\mathbb A}_{p-1}$, where  $\vec{\mathbb A}_n$ denotes the equioriented quiver of type ${\mathbb A}_n$. Further,  it induces a  triangle equivalence 
	\[ \underline{\ACM} \, \X   \simeq \DDD^{\bo}(\mod \mathbf{k} \vec{\mathbb A}_{p-1}).\]	
	We identify objects in $\underline{\ACM} \, \X$ with objects in $\ACM \X$ without line bundles direct summands. 
	By adding all line bundles $\OO(\x)$ for $\x \in \L$  together with the arrows connected to $2$-(co)Auslander bundles into the Auslander-Reiten quiver  ${\mathfrak A}(\underline{\ACM} \, \X)$, we get the Auslander-Reiten quiver ${\mathfrak A}({\ACM} \, \X)$, see also \cite[Theorems 4.13, 4.46]{HIMO}.
	\begin{figure}[H]
	 \resizebox{\textwidth}{!}{
		\begin{xy} 0;<16pt,0pt>:<0pt,16pt>::
			(16,8) *+{\cdot} ="42",
			(18,10) *+{\bullet} ="52",
			(16,4) *+{} ="23",
			(16.4,4.5) *+{\mdots} ="x",
			(17.2,5.4) *+{\mdots} ="y",
			(18,6) *+{} ="33",
			(20,8) *+{\cdot} ="43",
			(22,10) *+{\bullet} ="53",
			(16,0) *+{\bullet} ="04",
			(18,2) *+{\cdot} ="14",
			(20,4) *+{} ="24",
			(20.4,4.5) *+{\mdots} ="x",
			(21.2,5.4) *+{\mdots} ="y",
			(22,6) *+{} ="34",
			(24,8) *+{\cdot} ="44",
			(26,10) *+{\bullet} ="54",
			(20,0) *+{\bullet} ="05",
			(22,2) *+{\cdot} ="15",
			(24,4) *+{} ="25",
			(24.4,4.5) *+{\mdots} ="x",
			(25.2,5.4) *+{\mdots} ="y",
			(26,6) *+{} ="35",
			(28,8) *+{\cdot} ="45",
			(30,10) *+{\bullet} ="55",
			(24,0) *+{(\w)} ="06",
			(26,2) *+{\langle 0,0 \rangle} ="16",
			(28,4) *+{} ="26",
			(28.4,4.5) *+{\mdots} ="x",
			(29.2,5.4) *+{\mdots} ="y",
			(30,6) *+{} ="36",
			(32,8) *+{\langle p-2, 0 \rangle} ="46",
			(34,10) *+{(-\x_4)} ="56",
			(28,0) *+{(\w+\x_4)} ="07",
			(30,2) *+{\langle 0,1 \rangle} ="17",
			(32,4) *+{} ="27",
			(32.4,4.5) *+{\mdots} ="x",
			(33.2,5.4) *+{\mdots} ="y",
			(34,6) *+{} ="37",
			(36,8) *+{\langle p-2, 1 \rangle} ="47",
			(38,10) *+{(0)} ="57",
			(32,0) *+{\bullet} ="08",
			(34,2) *+{\cdot} ="18",
			(36,4) *+{} ="28",
			(36.4,4.5) *+{\mdots} ="x",
			(37.2,5.4) *+{\mdots} ="y",
			(38,6) *+{} ="38",
			(40,8) *+{\cdot} ="48",
			(42,10) *+{\bullet} ="58",
			(36,0) *+{\bullet} ="09",
			(38,2) *+{\cdot} ="19",
			(40,4) *+{} ="29",
			(40.4,4.5) *+{\mdots} ="x",
			(41.2,5.4) *+{\mdots} ="y",
			(42,6) *+{} ="39",
			(44,8) *+{\cdot} ="49",
			(46,10) *+{\bullet} ="59",
			(40,0) *+{\bullet} ="010",
			(42,2) *+{\cdot} ="110",
			(44,4) *+{} ="210",
			(44.4,4.5) *+{\mdots} ="x",
			(45.2,5.4) *+{\mdots} ="y",
			(46,6) *+{} ="310",
			(44,0) *+{\bullet} ="011",
			(46,2) *+{\cdot} ="111",
			"42", {\ar"52"},
			"42", {\ar"33"},
			"52", {\ar"43"},
			"33", {\ar"43"},
			"43", {\ar"53"},
			"23", {\ar"14"},
			"43", {\ar"34"},
			"53", {\ar"44"},
			"04", {\ar"14"},
			"14", {\ar"24"},
			"34", {\ar"44"},
			"44", {\ar"54"},
			"14", {\ar"05"},
			"24", {\ar"15"},
			"44", {\ar"35"},
			"54", {\ar"45"},
			"05", {\ar"15"},
			"15", {\ar"25"},
			"35", {\ar"45"},
			"45", {\ar"55"},
			"15", {\ar"06"},
			"25", {\ar"16"},
			"45", {\ar"36"},
			"55", {\ar"46"},
			"06", {\ar"16"},
			"16", {\ar"26"},
			"36", {\ar"46"},
			"46", {\ar"56"},
			"16", {\ar"07"},
			"26", {\ar"17"},
			"46", {\ar"37"},
			"56", {\ar"47"},
			"07", {\ar"17"},
			"17", {\ar"27"},
			"37", {\ar"47"},
			"47", {\ar"57"},
			"17", {\ar"08"},
			"27", {\ar"18"},
			"47", {\ar"38"},
			"57", {\ar"48"},
			"08", {\ar"18"},
			"18", {\ar"28"},
			"38", {\ar"48"},
			"48", {\ar"58"},
			"18", {\ar"09"},
			"28", {\ar"19"},
			"48", {\ar"39"},
			"58", {\ar"49"},
			"09", {\ar"19"},
			"19", {\ar"29"},
			"39", {\ar"49"},
			"49", {\ar"59"},
			"19", {\ar"010"},
			"29", {\ar"110"},
			"49", {\ar"310"},
			"010", {\ar"110"},
			"110", {\ar"210"},
			"110", {\ar"011"},
			"210", {\ar"111"},
			%
			"011", {\ar"111"},
		\end{xy}
	} 
	\caption{The Auslander-Reiten quiver of $\ACM \X$} \label{ARq of X}
	\end{figure}
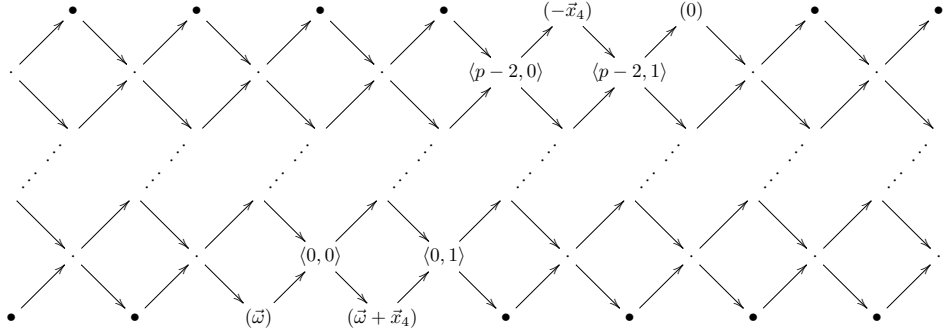
	Here, a vertex labeled $\langle i,j \rangle$ denotes the position of $2$-extension bundle  $E\langle i\x_4 \rangle(j\x_4)$ for $0\le i \le p-2$ and $j\in\Z$.
	The position of $\OO(\x)$ is denoted $(\x)$ or $\bullet$ in case $\x$ is not specified. The  
	positions of $\OO(\x)$ and $\OO(\x+\bar{x}_{ab})$ are overlapping, where $\bar{x}_{ab}:=\c-\x_a-\x_b$ for  $1\le a<b\le 3$. The interpretation is that every arrow starting or ending at  vertex $(\x)$ or $\bullet$ represents four arrows in the actual Auslander-Reiten quiver (each one connected to one of the $\OO(\x)$ and $\OO(\x+\bar{x}_{ab})$). 
	
	From Figure \ref{ARq of X}, we know that each indecomposable ACM bundle is either a line bundle or a $2$-extension bundle.  According to \cite{CRW}, the following objects are coincide: $2$-extension bundles, $2$-coextension bundles, rank-four indecomposable ACM bundles. 	
		
			\section{Rigid domains and roofs}\label{sec:Indecomposable ACM bundles of rank four}
		In this section, we introduce the notions of rigid domain and roof, together with their essential features, which play a key role in the construction of ACM tilting bundles. The major results of this paper are based on these properties.

		\subsection{Rigid domains} Throughout this subsection, let $E$ be an indecomposable ACM bundle of rank four.
		
			\begin{definition}  We define the \emph{rigid domain} of $E$, denoted by $\Dom(E)$ as the subset of $\ACM \X$ consisting of indecomposable ACM bundles $X$ such that $X \oplus E$ is rigid. 	Denote by $\Dom_{L}(E)$ the subset consisting of the line bundles in $\Dom(E)$. 
								
			Moreover, for a finite direct sum of rank-four indecomposable ACM bundle $F:=\bigoplus_{i\in I}E_i$, we denote by  $\Dom(F):=\bigcap_{i\in I}\Dom(E_i)$	and $\Dom_L(F):=\bigcap_{i\in I}\Dom_L(E_i)$.		
		\end{definition}
		
			\begin{lemma} \label{Hom-domains}
			Let  $X$ be an indecomposable ACM bundle. 
			\begin{itemize}
				\item[(a)]  If $\Hom(E,X)\neq 0$, then   we have $$\Hom(E,X(\x_4))\neq 0   \text{ and }  \Hom(E,X(-\w))\neq 0.$$
				\item[(b)]  If $\Hom(X,E)\neq 0$, then   we have $$\Hom(X(-\x_4),E)\neq 0  \text{ and } \Hom(X(\w),E)\neq 0.$$
			\end{itemize}
		\end{lemma}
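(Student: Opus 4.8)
The plan is to separate the two $\x_4$-shifts from the two $\w$-shifts, since only the former are realised by an honest morphism of sheaves. Because the section $X_4\in R_{\x_4}=\Hom(\OO,\OO(\x_4))$ is a non-zero-divisor, multiplication by $X_4$ is a monomorphism $\OO\to\OO(\x_4)$ with torsion cokernel; tensoring with a locally free sheaf $Y$ preserves both properties (local freeness is flatness), giving a monomorphism $m_Y\colon Y\to Y(\x_4)$ whose cokernel is torsion. For (a) I would post-compose: given $0\neq f\in\Hom(E,X)$, the map $m_X\circ f\in\Hom(E,X(\x_4))$ is nonzero because $m_X$ is monic, hence left-cancellable, so $\Hom(E,X(\x_4))\neq0$. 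For (b) I would pre-compose: given $0\neq g\in\Hom(X,E)$, consider $g\circ m_{X(-\x_4)}\in\Hom(X(-\x_4),E)$; were it zero, $g$ would factor through the torsion sheaf $\coker m_{X(-\x_4)}$, but a torsion sheaf admits no nonzero morphism to the bundle $E$ by \cite[Proposition 3.28]{HIMO}, forcing $g=0$. This settles the first assertion of each of (a) and (b).

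For the $\w$-shifts no such morphism exists, as $\pm\w$ is not effective ($R_{\w}=R_{-\w}=0$), and the statement genuinely uses that $E$ has rank four: for a line bundle in place of $E$ it already fails, e.g.\ $\Hom(\OO,\OO)\neq0$ while $\Hom(\OO,\OO(-\w))=R_{-\w}=0$. Applying the Auslander--Reiten--Serre duality of Theorem \ref{basic properties}(d) to both sides, the remaining assertions of (a) and of (b) become the single uniform claim
\[\Ext^2(A,C)\neq0\ \Longrightarrow\ \Ext^2(A,C(\w))\neq0,\]
where $(A,C)=(X,E(\w))$ for (a) and $(A,C)=(E,X(\w))$ for (b), so that in each case exactly one of $A,C$ is the rank-four bundle. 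Writing $\w=\c-\sum_{i=1}^4\x_i$, I would first dispose of the downward part: for each $i$ the sequence $0\to D(-\x_i)\xrightarrow{X_i}D\to T\to0$ with $T$ torsion gives, after applying $\Ext^2(A,-)$ and using $\Ext^2(A,T)\simeq D\Hom(T,A(\w))=0$ (torsion into a bundle), a surjection $\Ext^2(A,D(-\x_i))\twoheadrightarrow\Ext^2(A,D)$; hence nonvanishing of $\Ext^2(A,-)$ propagates under every twist by $-\x_i$. Since $C(\w)=C(\c)(-\x_1)(-\x_2)(-\x_3)(-\x_4)$, this reduces the uniform claim to the single upward step by $\c$, namely $\Ext^2(A,C)\neq0\Rightarrow\Ext^2(A,C(\c))\neq0$.

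The entire difficulty is thereby concentrated in this $\c$-step, and I expect it to be the main obstacle. Here the multiplication morphism $C\to C(\c)$ does exist (e.g.\ by $X_4^{p}$), but it induces only a map $\Ext^2(A,C)\to\Ext^2(A,C(\c))$ whose injectivity is precisely the point at issue — its kernel is the image of the connecting map $\Ext^1(A,T')\to\Ext^2(A,C)$ for the torsion cokernel $T'$ — so the formal cancellation used for the $\x_4$-shifts no longer suffices, and the rank-four hypothesis must enter. My plan is to feed the defining four-term sequence of the rank-four bundle (in the second variable for (a), the first for (b)) into $\Ext^2(A,-)$, respectively $\Ext^2(-,C)$, and reduce to the explicit line-bundle groups of Theorem \ref{basic properties}(c); since that sequence bridges the degrees $\w$ and $\x=\lambda\x_4$ and so spans the canonical degree $\c$, the combination of its line-bundle constituents should force the nonvanishing even though no single constituent does. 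Equivalently, and perhaps more transparently, both $\w$-assertions can be read off the Auslander--Reiten quiver of Figure \ref{ARq of X}, on which the autoequivalences $(\x_4)$ and $(\w)$ act as explicit translations and the supports of $\Hom(E,-)$ and $\Hom(-,E)$ are intervals closed under them; the hard part is to verify that the effective-degree conditions surviving in these $\Ext^2$-computations are indeed preserved across the $\c$-gap.
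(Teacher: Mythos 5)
Your treatment of the two $\x_4$-shifts is correct and, if anything, cleaner than the paper's: the paper also post-composes with the injection $L\to L(\x_4)$ in the line-bundle case, but handles the rank-four case separately via the almost split sequence $0\to F\to Y\to F(\x_4)\to 0$, whereas your multiplication-by-$X_4$ argument (monomorphism with torsion cokernel, plus ``no nonzero maps from torsion sheaves to bundles'') works uniformly for both ranks. Your observation that the $\w$-assertions fail for line bundles in place of $E$ is also correct and well taken.

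The $\w$-shifts, however, are a genuine gap: you explicitly leave the key step unproved, and the reduction you propose is a dead end. After passing to $\Ext^2$ you route the twist by $\w=\c-\sum_{i=1}^4\x_i$ through the intermediate object $C(\c)$, so that the ``$\c$-step'' you would need is $\Ext^2(A,C)\neq0\Rightarrow\Ext^2(A,C(\c))\neq0$. Translating back by Serre duality (in case (b), with $A=E$ and $C=X(\w)$) this says $\Hom(X,E)\neq0\Rightarrow\Hom(X,E(-\c))\neq0$, which is false: take $X=E=E\langle 0\rangle$, so $\Hom(E,E)\neq0$ while $\Hom(E,E(-\c))=\Hom(\langle 0,0\rangle,\langle 0,-p\rangle)=0$ by Lemma \ref{Hom-domains 2}. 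So the difficulty is not merely ``concentrated'' in the $\c$-step --- that step cannot be established, because shifting the second argument of $\Ext^2$ up by $\c$ strictly shrinks the relevant Hom-support. The idea you are missing is Proposition \ref{L-action on 2-extension bundles}(b),(c): a rank-four indecomposable ACM bundle satisfies $E\simeq E(\x_1-\x_2)$, and since $2\x_2=\c$ one has $-\w=(\x_1-\x_2)+\x_3+\x_4$. Hence $\Hom(E,X(-\w))\simeq\Hom(E,X(\x_3+\x_4))$ after absorbing the twist $\x_1-\x_2$ into the rank-four factor (into $E$ if $X$ is a line bundle, into $X$ if it has rank four), and the effective shift by $\x_3+\x_4$ is handled by exactly the multiplication argument you already used for $\x_4$. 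This is precisely how the paper proceeds (composing $E\to L(\x_1-\x_2)\to L(-\w)$ in the line-bundle case, and using the injective hull sequence $0\to F\to\mathfrak{I}(F)\to F(\x_3)\to 0$ together with $F(-\w)\simeq F(\x_3+\x_4)$ in the rank-four case); with this substitution your own framework of monomorphisms with torsion cokernels would close the argument, while the detour through $\c$ and the appeal to Figure \ref{ARq of X} (which would be circular, as Lemma \ref{Hom-domains 2} is derived from the present lemma) should be discarded.
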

		\begin{proof} We only prove (a), since  (b) can be shown by a dual argument.
			Concerning the rank of $X$, we divide the proof into the following two cases.
			
			\emph{Case $1$}:  $X$ is a line bundle, denoted by $L$.
			Applying $\Hom(E,-)$ to the injective morphism $L \to L(\x_4)$,  we obtain the first inequality. 
			
			For the second inequality, we let $L':=L(\x_1-\x_2)$. By Proposition \ref{L-action on 2-extension bundles}(b), we have $E\simeq E(\x_1-\x_2)$ and thus
			$ \Hom_{}(E,L')\simeq \Hom_{}(E,L) \neq 0.$
			Take a nonzero morphism $f:E \to L'$. Then the composition $if: E \to L' \to L'(\x_3+\x_4)$ of $f:E \to L'$ and the injective morphism $i:L' \to L'(\x_3+\x_4)$ is also  nonzero. Note that $L'(\x_3+\x_4)=L(-\w)$. Thus the second inequality follows.
			
			\emph{Case $2$}:  $X$ is an indecomposable ACM bundle with $\rk X=4$, denoted by $F$. Note that $\tau^{-1} F:= F(-\w)[-1] =F(\x_4)$, where $\tau$ denotes the Auslander–Reiten translation.
			Then there exists an almost split sequence $$0 \to F \to Y \to F(\x_4) \to 0$$ in $\ACM \X$. Since $\Hom_{}(E,F) \neq 0$  by our assumption, there is an indecomposable direct summand $Y'$ of $Y$ such that $\Hom_{}(E,Y') \neq 0$. Moreover, $\rk Y' \le 4=\rk F(\x_4)$ implies that the irreducible morphism $\phi: Y' \to F(\x_4)$ is injective. In fact, if $\phi$ is surjective, then its kernel has rank $0$ and thus $\phi$ is an isomorphism, a contradiction. Then  $\Hom_{}(E,F(\x_4))\neq 0$, which concludes the first inequality.  
			
			For the second inequality, let $\mathfrak{I}(F)$ be the injective hull of $F$ in the Frobenius category $\ACM \X$. By Proposition \ref{L-action on 2-extension bundles}(b), there exists an exact sequence $$0\to F \to \mathfrak{I}(F) \to F(\x_3) \to 0$$ in $\ACM \X$. Since $\Hom_{}(E,F) \neq 0$, there exists some line bundle $L''$ of $\mathfrak{I}(F)$ such that $\Hom_{}(E,L'') \neq 0$. 
			Then the composition $E \to L'' \to F(\x_3) $ of  a nonzero morphism $E \to L''$ and the injective morphism $L'' \to  F(\x_3)$ is also nonzero. 
			Moreover, by Proposition \ref{L-action on 2-extension bundles}, we have
			 $\Hom(E,F(-\w))\simeq \Hom(E,F(\x_3+\x_4))$, which is nonzero by the first inequality. Hence we have the assertion.			
		\end{proof}

		\begin{proposition}\label{Rigid-domains} Let $X$ be an indecomposable ACM bundle of rank four. Then $X\in \Dom(E)$ if and only if $X$ satisfies the following conditions.
			\begin{itemize}
				\item[(a)] $\Hom(X,E(\w))=0$ and $\Hom(E(-\w),X)=0$.
				\item[(b)] $\underline{\Hom}(E[-1],X)=0$ and $\underline{\Hom}(X,E[1])=0$. 
			\end{itemize}
		\end{proposition}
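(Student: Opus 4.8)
The plan is to unwind the definition and match the rigidity of $X\oplus E$ against the two conditions. By definition $X\in\Dom(E)$ means that $X\oplus E$ is rigid in $\DDD^{\bo}(\coh\X)$, i.e. $\Hom(X\oplus E,(X\oplus E)[i])=0$ for every $i\neq 0$. Since $X,E$ lie in the heart $\coh\X$, all negative $\Hom$'s vanish, and since $\coh\X$ has global dimension $2$ by Theorem \ref{basic properties}(a), only $i\in\{1,2\}$ can contribute. Thus rigidity of $X\oplus E$ is equivalent to the simultaneous vanishing of $\Ext^1$ and $\Ext^2$ for each of the four ordered pairs $(E,E)$, $(X,X)$, $(E,X)$, $(X,E)$. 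I would first translate the two mixed pairs into (a) and (b), and then argue that the two diagonal pairs vanish automatically.

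For the mixed pairs I would use two identifications valid for $Y,Z\in\ACM\X$. First, $\ACM\X$ is extension-closed in $\coh\X$ by \eqref{ACM bundle eq.} and, being a Frobenius category with the line bundles as projective-injectives, satisfies $\Ext^{1}(Y,Z)\cong\underline{\Hom}(Y,Z[1])$; applying the autoequivalence $[1]$ of $\underline{\ACM}\,\X$ then rewrites condition (b) as $\Ext^{1}(E,X)=0$ and $\Ext^{1}(X,E)=0$. Second, the Auslander-Reiten-Serre duality of Theorem \ref{basic properties}(d) gives
\[
\Ext^{2}(E,X)=\Hom(E,X[2])\cong D\Hom(X,E(\w)),\qquad
\Ext^{2}(X,E)=\Hom(X,E[2])\cong D\Hom(E(-\w),X),
\]
where in the second isomorphism I also use $\Hom(E,X(\w))\cong\Hom(E(-\w),X)$. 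Hence condition (a) is exactly $\Ext^{2}(E,X)=0$ and $\Ext^{2}(X,E)=0$. Combining the two, (a) together with (b) is equivalent to the vanishing of all four mixed groups $\Ext^{1}(E,X)$, $\Ext^{2}(E,X)$, $\Ext^{1}(X,E)$, $\Ext^{2}(X,E)$.

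It remains to see that the diagonal pairs impose no condition, i.e. that every indecomposable rank-four ACM bundle is rigid in $\DDD^{\bo}(\coh\X)$. Writing such a bundle as a twist $E_{L}\langle\lambda\x_4\rangle$ by Proposition \ref{L-action on 2-extension bundles}(b) and using that twisting by a line bundle is an autoequivalence of $\coh\X$ preserving $\ACM\X$ (hence of $\underline{\ACM}\,\X$), rigidity is twist-invariant, so it suffices to treat $E\langle\lambda\x_4\rangle$ with $0\le\lambda\le p-2$. Since $T_{\mathrm{cub}}=\bigoplus_{\ell=0}^{p-2}E\langle\ell\x_4\rangle$ is a tilting object in $\underline{\ACM}\,\X$ by \cite[Theorem 6.1]{CRW}, it is rigid there, which gives $\underline{\Hom}(E\langle\lambda\x_4\rangle,E\langle\lambda\x_4\rangle[i])=0$ for $i\neq 0$; the $i=1$ instance yields $\Ext^{1}(E,E)=0$ via the Frobenius identification above. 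For $\Ext^{2}(E,E)$ I would again invoke Serre duality, reducing it to $\Hom(E,E(\w))=0$. The main obstacle is precisely this last vanishing: the stable Auslander-Reiten-Serre duality \eqref{Auslander-Reiten-Serre duality CM} together with stable rigidity only give $\underline{\Hom}(E,E(\w))=0$, so one must still rule out nonzero morphisms $E\to E(\w)$ that factor through a line bundle. I would settle this by analysing the injective hull of $E$ in the Frobenius category $\ACM\X$ (Proposition \ref{L-action on 2-extension bundles}(b)) and checking that no line-bundle summand $L$ admits compatible nonzero maps $E\to L\to E(\w)$, a computation in the spirit of Lemma \ref{Hom-domains}. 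Once the diagonal vanishing is established, the equivalence ``$X\oplus E$ rigid $\iff$ (a) and (b)'' follows, which is the assertion.
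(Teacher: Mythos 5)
Your proposal is correct and follows essentially the same route as the paper: reduce rigidity of $X\oplus E$ to the vanishing of the four mixed $\Ext$-groups, convert the $\Ext^2$'s into condition (a) via Auslander--Reiten--Serre duality, and identify the $\Ext^1$'s with the stable Hom-groups of condition (b) using the Frobenius structure of $\ACM\X$ (the paper carries out this last identification by applying $\Hom(E,-)$ to $0\to X\to\mathfrak{I}(X)\to X[1]\to 0$ and observing that factoring through line bundles is the same as factoring through the injective hull, which is exactly the identification you invoke). The one place you diverge is the diagonal terms: the paper dismisses $\Ext^{i}(E,E)$ and $\Ext^{i}(X,X)$ in a single phrase by citing the exceptionality of $2$-extension bundles established in \cite{CRW}, whereas you attempt to re-derive this and leave the crucial vanishing $\Hom(E,E(\w))=0$ as an announced but unexecuted computation. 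That step is not in doubt --- it follows immediately from the Hom-vanishing criterion recorded in Lemma \ref{Hom-domains 2} together with $\langle i,k_i\rangle(\w)=\langle p-2-i,\,i+k_i-p\rangle$ (Proposition \ref{L-action on 2-extension bundles}), and in any case the full exceptionality is already proved in \cite{CRW} --- but as written your treatment of the diagonal vanishing is a plan rather than a proof, so you should either complete that short computation or simply quote the known exceptionality as the paper does.
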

		
		\begin{proof} By the exceptional properties of $E$ and $X$,  $E \oplus X$ is rigid if and only if 	
			$$ \Ext_{}^{i}(E,X)=0\ \text{ and } \ \Ext_{}^{i}(X,E)=0  \ \text{ for } i=1,2. $$
			Using Auslander-Reiten-Serre duality, we have 
			\begin{align} \label{Rigid-domains equ. 0}
				\Hom_{}(X,E(\w))\simeq D\Ext_{}^2(E,X)\ \text{ and }\ \Hom_{}(E(-\w),X)\simeq D\Ext_{}^2(X,E).
			\end{align}	
					
			On the other hand, we claim the following equalities
			\begin{align} \label{Rigid-domains equ.}
				\underline{\Hom}(E[-1],X)\simeq \Ext^{1}(E,X)\ \text{ and } \ \underline{\Hom}(X,E[1])\simeq\Ext^{1}(X,E).
			\end{align}
			We only show the first isomorphism since the second one can be shown by a similar argument.
			If $X$ is a line bundle, then (\ref{Rigid-domains equ.}) is obvious by (\ref{ACM bundle eq.}). Otherwise we apply $\Hom(E,-)$ to the exact sequence $0\to X \to \mathfrak{I}(X) \xrightarrow{\phi} X[1] \to 0$, where $\mathfrak{I}(X)$ is the injective hull of $X$. Then we obtain an exact sequence $$ \Hom(E,\mathfrak{I}(X))\xrightarrow{\phi^{\ast}} \Hom(E,X[1]) \to \Ext^{1}(E,X) \to 0. $$
			Observe that for each $f\in\Hom(E,X[1])$, we have $f=0$ in $\underline{\Hom}(E,X[1])$ (that is, $f$ factors through a finite direct sum of line bundles) if and only if $f$ factors through the injective hull $\mathfrak{I}(X)$ of $X$. Thus
			we have $$\coker (\phi^{\ast})=\Hom(E,X[1])/ \Im \phi^{\ast} =\underline{\Hom}(E,X[1]).$$
			Thus  (\ref{Rigid-domains equ.}) holds. Combining (\ref{Rigid-domains equ. 0}) and (\ref{Rigid-domains equ.}), we obtain the assertion.
		\end{proof}
		
		Combining the previous proposition with  Lemma \ref{Hom-domains},  $\Dom(E)$ may be illustrated by 
		the following shaded regions of the Auslander-Reiten quiver ${\mathfrak A}(\ACM \X)$.
		\begin{figure}[H]
		\begin{tikzpicture}[x=0.75pt,y=0.85pt,yscale=-1.05,xscale=1.05] 
			\draw [draw opacity=0][fill={rgb, 255:red, 208; green, 2; blue, 27 }  ,fill opacity=0.2 ]   (276.43,132.01) -- (314.75,169.43) -- (237.87,168.72) -- cycle ;
			\draw [shift={(276.43,132.01)}, rotate = 316.4] [draw opacity=0][line width=0.75]      (0, 0) circle [x radius= 3.35, y radius= 3.35]   ;
			\draw [draw opacity=0][fill={rgb, 255:red, 208; green, 2; blue, 27 }  ,fill opacity=0.2 ]   (213.68,70.33) -- (340.02,70.75) -- (276.2,132.13) -- (213.68,70.75) ;
			\draw [shift={(213.68,70.75)}, rotate = 224.48] [draw opacity=0][line width=0.75]      (0, 0) circle [x radius= 3.35, y radius= 3.35]   ; 
			\draw [draw opacity=0][fill={rgb, 255:red, 74; green, 144; blue, 226 }  ,fill opacity=0.2 ]   (137.88,70.75) -- (213.68,70.75) -- (175.12,106.7) -- (237.87,168.72) -- (112.62,169.43) -- (175.28,106.37) -- cycle ;
			\draw [shift={(137.88,70.75)}, rotate = 223.61] [draw opacity=0][line width=0.75]      (0, 0) circle [x radius= 3.35, y radius= 3.35]   ;
			\draw [shift={(137.88,70.75)}, rotate = 0] [draw opacity=0][line width=0.75]      (0, 0) circle [x radius= 3.35, y radius= 3.35]   ; 
			\draw [draw opacity=0][fill={rgb, 255:red, 74; green, 144; blue, 226 }  ,fill opacity=0.2 ]   (340.25,70.44) -- (415.82,70.75) -- (376.84,107.05) -- (376.84,107.05) -- (440.01,168.72) -- (314.75,169.43) -- (376.84,107.05) -- cycle ;
			\draw [color={rgb, 255:red, 0; green, 0; blue, 0 }  ,draw opacity=1 ]   (62.04,70.75) -- (491.38,70.39) ; 
			\draw [color={rgb, 255:red, 0; green, 0; blue, 0 }  ,draw opacity=1 ]   (61.66,169.43) -- (491.01,169.07) ;
			\draw    (340.02,70.75) -- (238.95,169.43) ; 
			\draw    (213.68,70.75) -- (314.75,169.43) ;
			\draw  [dash pattern={on 0.84pt off 2.51pt}]  (213.68,70.75) -- (174.71,107.05) -- (237.87,168.72) ;
			\draw    (112.62,169.43) -- (174.71,107.05) -- (137.88,70.75) ;
			\draw  [dash pattern={on 0.84pt off 2.51pt}]  (314.75,169.43) -- (376.84,107.05) -- (340.02,70.75) ; 
			\draw    (415.82,70.75) -- (376.84,107.05) -- (440.01,168.72) ;
			
			\draw (281,128) node [anchor=north west][inner sep=0.75pt]  [font=\scriptsize]  {$E$};
			\draw (140,102) node [anchor=north west][inner sep=0.75pt]  [font=\scriptsize]  {$E[-1]$};
			\draw (382,102.5) node [anchor=north west][inner sep=0.75pt]  [font=\scriptsize]  {$E[1]$};
		\end{tikzpicture}
		\caption{$\Dom(E)$ in Auslander-Reiten quiver ${\mathfrak A}(\ACM \X)$}\label{Dom(E)}
		\end{figure}
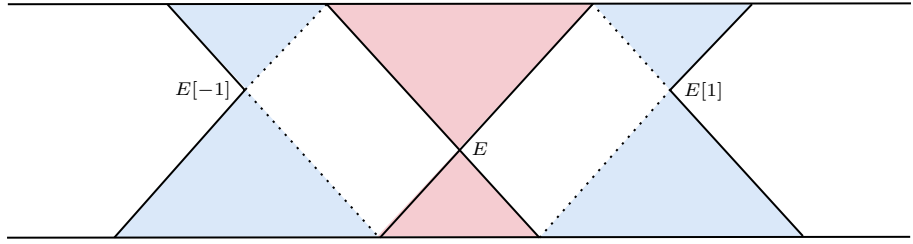
		
		For later use, we define a distinguished subset of $\Dom(E)$ by
		$$\Dom^{+}(E):=\{X\in \Dom(E)\mid \Hom_{}(X,E(-\x_4))=0 \text{ and } \Hom_{}(E(\x_4),X)=0\}.$$
		Its complement in $\Dom(E)$ is denoted by $\Dom^{-}(E):=\Dom(E)\setminus \Dom^{+}(E).$				
		Intuitively, the set $\Dom^{+}(E)$ (resp. $\Dom^{-}(E)$) corresponds to the red (resp. blue) region in Figure \ref{Dom(E)}.
		
		\begin{setting} \label{setting 1} We fix some conventions and notation. Any indecomposable  ACM bundle $E$ of rank four  is precisely a $2$-extension bundle, and it can be written uniquely as  $E=E\langle i\x_4 \rangle(k_i\x_4)$ with $0\le i\le p-2$ and $k_i\in\Z$. For simplicity, we just write the pair $\langle i,k_i\rangle$. Let $g\in G:=\{0,\x_1-\x_2,\x_1-\x_3,\x_2-\x_3\}$ and $\langle i,k_i\rangle_g:=\langle i,k_i\rangle(g)$. Note that $\langle i,k_i\rangle=\langle i,k_i\rangle_g$ holds by Proposition \ref{L-action on 2-extension bundles}.
			 			
			Enlarging the range of $i$ in $\langle i,k_i\rangle$ to the interval $[-1,p-1]$, 	we put
			\begin{align*}
				\langle -1,k_i \rangle:=\bigcup_{g\in G} \{\langle -1,k_i \rangle_g\} \ \text{ (resp. }  \langle p-1,k_i+1\rangle:=\bigcup_{g\in G} \{ \langle p-1,k_i+1  \rangle_g\} )
			\end{align*}
			where $\langle -1,k_i \rangle_g:=\OO(\w+k_i\x_4+g)$ and $\langle p-1,k_i+1 \rangle_g:=\OO(k_i\x_4+g)$.

				With the notation, we provide an explicit description of the following domains: 
			\begin{eqnarray}
				\Dom^{+}\langle i,k_i \rangle&=&\hspace{-1.5em}\bigcup_{{\substack{n_1n_2 \ge 0 \\ -i-1\le n_1+n_2\le p-1-i }}}\hspace{-1.5em}\{ \langle i+n_1+n_2,k_i-n_2 \rangle_g \mid g\in G\},\\ 	
				\Dom_{L}\langle i,k_i \rangle&=&\bigcup_{n_1\in I_1} \langle -1,n_1 \rangle \cup \bigcup_{n_2\in I_2} \langle p-1,n_2+1 \rangle \label{Dom 1},			
			\end{eqnarray}
			where the integer ranges 
			\begin{align*}I_1=[i+k_i-p+1,k_i+p] \ \text{ and }\ I_2=[k_i-p-1,i+k_i].\end{align*}	
				
			Moreover, we divide $\Dom^{+}\langle i,k_i \rangle$ into the following two useful parts 
				\begin{eqnarray} 
				\Dom^{+}_{\triangle}\langle i,k_i \rangle&:=&\hspace{-0.3em}\bigcup_{\substack{n_1\ge 0, n_2 \ge 0\\  n_1+n_2\le i+1}} \hspace{-0.3em}  \{ \langle i-n_1-n_2,k_i+n_2 \rangle_g  \mid g\in G\} \label{Dom 3},\\
				\Dom^{+}_{\bigtriangledown}\langle i,k_i \rangle&:=&\hspace{-0.8em}\bigcup_{\substack{n_1\ge 0, n_2 \ge 0\\n_1+n_2\le p-1-i}}\hspace{-0.8em} \{ \langle i+n_1+n_2,k_i-n_2 \rangle_g \mid g\in G \}. \label{Dom 2}  			
			\end{eqnarray}	
			
				Similarly, we divide $\Dom_{L}\langle i,k_i \rangle$ into the following two parts
			\begin{align} \label{Dom 4}
				\Dom_{\triangle,L}\langle i,k_i \rangle:=\bigcup_{n_1\in I_1} \langle -1,n_1 \rangle \ \ \text{and} \ \ \Dom_{\bigtriangledown,L}\langle i,k_i \rangle:=\bigcup_{n_2\in I_2} \langle p-1,n_2+1 \rangle.
			\end{align}

		\end{setting}	

		We give a simple example to explain our above setting.
		
		\begin{example} \label{example_1}
			Let $\X$ be a GL projective space of weight type $(2,2,2,4)$. Put $E:=\langle 1,0 \rangle$.  
			A piece of the Auslander-Reiten quiver $\mathfrak A{(\ACM \X)}$ is the following, where the objects in $\Dom (E)$ coincide with the red-shaded region $\Dom^{+}(E)$ and  the blue-shaded region $\Dom^{-}(E)$.  The objects in $\Dom^{+}_{\triangle}(E)$ (resp. $\Dom^{+}_{\bigtriangledown}(E)$) consist of $E$ together with  the objects in $\Dom^{+}(E)$  which posit on the below (resp. above) of $E$. 						
			The objects in $\Dom_L (E)$  correspond the line bundle contained in $\Dom (E)$, which consists of two parts: one is  the button line bundles $\Dom_{\triangle,L}(E)$ in $\Dom (E)$, and the other is the top line bundles $\Dom_{\bigtriangledown,L} (E)$ in $\Dom (E)$.
		\begin{figure}[H]
				 \resizebox{\textwidth}{!}{
				\begin{xy} 0;<14pt,0pt>:<0pt,17pt>::
					(12,8) *+{\bullet} ="41",
					(12,4) *+{\cdot} ="22",
					(14,6) *+{\langle 2,-4 \rangle} ="32",
					(16,8) *+{\blue \scalebox{1.08}{$(-\c-\x_4)$}} ="42",
					(12,0) *+{\bullet} ="03",
					(14,2) *+{\langle 0,-3 \rangle} ="13",
					(16,4) *+{\langle 1,-3 \rangle} ="23",
					(18,6) *+{\blue \scalebox{1.08}{$\langle 2,-3 \rangle$}} ="33",
					(20,8) *+{\blue \scalebox{1.08}{$(-\c)$}} ="43",
					(16,0) *+{\blue \scalebox{1.08}{$(\w-2\x_4)$}} ="04",
					(18,2) *+{\blue  \scalebox{1.08}{$\langle 0,-2 \rangle$}} ="14",
					(20,4) *+{\langle 1,-2 \rangle} ="24",
					(22,6) *+{\langle 2,-2 \rangle} ="34",
					(24,8) *+{\red \scalebox{1.08}{$(-3\x_4)$}} ="44",
					(20,0) *+{\blue \scalebox{1.08}{$(\w-\x_4)$}} ="05",
					(22,2) *+{\langle 0,-1 \rangle} ="15",
					(24,4) *+{\langle 1,-1 \rangle} ="25",
					(26,6) *+{\red  \scalebox{1.08}{$\langle 2,-1 \rangle$}} ="35",
					(28,8) *+{\red \scalebox{1.08}{$(-2\x_4)$}} ="45",
					(24,0) *+{\red \scalebox{1.08}{$(\w)$}} ="06",
					(26,2) *{\red \scalebox{1.08}{$\langle 0,0 \rangle$}} ="16",
					(28,4) *+{\red \scalebox{1.08}{$\langle 1,0 \rangle$}} ="26",
					(30,6) *+{\red \scalebox{1.08}{$\langle 2,0 \rangle$}} ="36",
					(32,8) *+{\red \scalebox{1.08}{$(-\x_4)$}} ="46",
					(28,0) *+{\red \scalebox{1.08}{$(\w+\x_4)$}} ="07",
					(30,2) *+{\red \scalebox{1.08}{$\langle 0,1 \rangle$}} ="17",
					(32,4) *+{\langle 1,1 \rangle} ="27",
					(34,6) *+{\langle 2,1 \rangle} ="37",
					(36,8) *+{\blue \scalebox{1.08}{$(0)$}} ="47",
					(32,0) *+{\red \scalebox{1.08}{$(\w+2\x_4)$}} ="08",
					(34,2) *+{\langle 0,2 \rangle} ="18",
					(36,4) *+{\langle 1,2 \rangle} ="28",
					(38,6) *+{\blue \scalebox{1.08}{$\langle 2,2 \rangle$}} ="38",
					(40,8) *+{\blue \scalebox{1.08}{$(\x_4)$}} ="48",
					(36,0) *+{\blue \scalebox{1.08}{$(\w+3\x_4)$}} ="09",
					(38,2) *+{\blue \scalebox{1.08}{$\langle 0,3 \rangle$}} ="19",
					(40,4) *+{\langle 1,3 \rangle} ="29",
					(42,6) *+{\langle 2,3 \rangle} ="39",
					(44,8) *+{\bullet} ="49",
					(40,0) *+{\blue \scalebox{1.08}{$(\w+\c)$}} ="010",
					(42,2) *+{\langle 0,4 \rangle} ="110",
					(44,4) *+{\cdot} ="210",
					(44,0) *+{\bullet} ="011",
					"41", {\ar"32"},
					"22", {\ar"32"},
					"32", {\ar"23"},
					"32", {\ar"42"},
					"22", {\ar"13"},
					"42", {\ar"33"},
					"03", {\ar"13"},
					"13", {\ar"23"},
					"33", {\ar"43"},
					"13", {\ar"04"},
					"23", {\ar"14"},
					"23", {\ar"33"},
					"33", {\ar"24"},
					"43", {\ar"34"},
					"04", {\ar"14"},
					"14", {\ar"24"},
					"34", {\ar"44"},
					"14", {\ar"05"},
					"24", {\ar"15"},
					"24", {\ar"34"},
					"34", {\ar"25"},
					"44", {\ar"35"},
					"05", {\ar"15"},
					"15", {\ar"25"},
					"35", {\ar"45"},
					"35", {\ar"26"},
					"15", {\ar"06"},
					"25", {\ar"16"},
					"25", {\ar"35"},
					"45", {\ar"36"},
					"06", {\ar"16"},
					"16", {\ar"26"},
					"36", {\ar"46"},
					"36", {\ar"27"},
					"16", {\ar"07"},
					"26", {\ar"17"},
					"26", {\ar"36"},
					"46", {\ar"37"},
					"07", {\ar"17"},
					"17", {\ar"27"},
					"37", {\ar"47"},
					"37", {\ar"28"},
					"27", {\ar"37"},
					"17", {\ar"08"},
					"27", {\ar"18"},
					"47", {\ar"38"},
					"08", {\ar"18"},
					"18", {\ar"28"},
					"28", {\ar"38"},
					"38", {\ar"48"},
					"38", {\ar"29"},
					"18", {\ar"09"},
					"28", {\ar"19"},
					"48", {\ar"39"},
					"09", {\ar"19"},
					"19", {\ar"29"},
					"39", {\ar"49"},
					"39", {\ar"210"},
					"19", {\ar"010"},
					"29", {\ar"110"},
					"29", {\ar"39"},
					"010", {\ar"110"},
					"110", {\ar"210"},
					"110", {\ar"011"},
				\end{xy}
			} \caption{$\Dom^{+}(E)$ and $\Dom^{-}(E)$ in ${\mathfrak A}(\ACM \X)$}\label{Dom in AR}
		\end{figure}
		\end{example}
		
		As a direct consequence of Lemma \ref{Hom-domains}, we have the following description.
		\begin{lemma}  \label{Hom-domains 2}
			Let  $E:=\langle i,k_i \rangle$ and $X:=\langle j,k_j \rangle$ with $0\le i \le p-2$, $-1\le j \le p-1$  and $k_i,k_j\in \Z$. Then we have
			\begin{itemize}
				\item[(a)]  $\Hom(E,X)\neq0  \text{ if and only if } k_i\le k_j \text{ and } i+k_i\le j+k_j$.
				\item[(b)] $\Hom(X,E)\neq0  \text{ if and only if } k_j\le k_i \text{ and } j+k_j\le i+k_i$.
			\end{itemize}			
		\end{lemma}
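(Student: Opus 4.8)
The plan is to read both statements in the coordinates $(k_\bullet,\ \bullet+k_\bullet)$ attached to a symbol $\langle \bullet, k_\bullet\rangle$, so that the two inequalities in (a) say exactly that $\langle j,k_j\rangle$ lies in the ``forward quadrant'' of $\langle i,k_i\rangle$; the lemma then becomes the assertion that $\Hom(E,-)$ is nonzero precisely on this quadrant. I would first reduce (b) to (a): when $X=\langle j,k_j\rangle$ is again of rank four ($0\le j\le p-2$) the claim for $\Hom(X,E)$ is (a) with the two bundles interchanged, so only the line-bundle symbols $j\in\{-1,p-1\}$ remain, and these are treated by the same computation as the corresponding line-bundle instance of (a). Using the degree shift $(\x_4)$ and Proposition~\ref{L-action on 2-extension bundles}, which increases $k_i,k_j$ simultaneously and leaves the two inequalities invariant, I would also normalize to $k_i=0$. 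The engine for everything is Lemma~\ref{Hom-domains}: in labels it says the nonvanishing locus of $\Hom(E,-)$ is stable under $(\x_4)\colon\langle j,k_j\rangle\mapsto\langle j,k_j+1\rangle$ (a move increasing both coordinates by one) and under $(-\w)$, while its contrapositive says the vanishing locus is stable under the inverse moves.

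For the sufficiency direction of (a) I would produce nonzero maps along the two ``walls'' of the quadrant and then spread them by monotonicity. The wall $\{\,i+k=i+k_i\,\}$ is the path $\langle i,k_i\rangle\to\langle i-1,k_i+1\rangle\to\cdots\to\langle -1, i+k_i+1\rangle$ and the wall $\{\,k=k_i\,\}$ is the path $\langle i,k_i\rangle\to\langle i+1,k_i\rangle\to\cdots\to\langle p-1,k_i\rangle$; in both cases these are paths of irreducible maps in the Auslander--Reiten quiver of Figure~\ref{ARq of X}, each running in a single direction and ending at one of the line bundles $\langle -1,\cdot\rangle$, $\langle p-1,\cdot\rangle$ through the arrows joining the rows $i=0$, $i=p-2$ to the line bundles. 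Because such a path is sectional, its composite is a nonzero morphism out of $E$, so every wall symbol lies in the nonvanishing locus. Applying the $(\x_4)$-move of Lemma~\ref{Hom-domains}(a) to each wall symbol sweeps out its whole diagonal, and one checks directly that the two walls together with these diagonals exhaust the forward quadrant; this gives $\Hom(E,X)\neq0$ for every $X$ satisfying the inequalities.

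For the necessity direction I would argue by contraposition. The vanishing locus is stable under $(-\x_4)$, and a short bookkeeping shows that every symbol lying strictly below the quadrant ($i+k<i+k_i$) or strictly to its left ($k<k_i$) is an iterated $(-\x_4)$-image of a symbol on one of the two bordering walls $\{\,i+k=i+k_i-1\,\}$, $\{\,k=k_i-1\,\}$; hence it suffices to prove $\Hom(E,W)=0$ for $W$ on these two walls. For such $W$ I would show that a hypothetical nonzero map $E\to W$ reduces, upon composing with the line-bundle sub- and quotient-maps of the defining four-term sequences (\ref{2-extension bundle exact}) of $E$ and of $W$, to a nonzero $\Hom$ between line bundles; by (\ref{extension spaces between line bundles}) this $\Hom$ is $R_{\y-\x}$, which vanishes because the relevant degrees $\y-\x$ fail to be $\ge 0$ on these walls.

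The main obstacle is the base case hiding in both directions: the explicit line-bundle computations run through the four-term sequence (\ref{2-extension bundle exact}), so one must thread the five line bundles $\OO(\w+k_i\x_4)$, $\OO((i+k_i)\x_4-\x_m)$, $\OO((k_i-1)\x_4)$, $\OO((i+k_i)\x_4)$ through the long exact $\Hom/\Ext$ sequences, keep track of the $\Ext^2$ contributions $D(R_{\x-\y+\w})$ produced by Auslander--Reiten--Serre duality (\ref{Auslander-Reiten-Serre duality}), and verify the (non)vanishing of the graded pieces $R_{\vec d}$ uniformly over the four $G$-translates that each line-bundle symbol $\langle -1,\cdot\rangle$, $\langle p-1,\cdot\rangle$ abbreviates. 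The second delicate point is the claim that the sectional composites along the two walls stay nonzero even at the line-bundle ends; I expect to settle this from the standardness of the Auslander--Reiten quiver in Figure~\ref{ARq of X} (no mesh relation shortens a one-directional path), after which the monotonicity of Lemma~\ref{Hom-domains} supplies all the remaining, purely formal, propagation.
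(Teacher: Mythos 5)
The paper gives no written argument for this lemma (it is recorded as ``a direct consequence of Lemma \ref{Hom-domains}''), and your plan --- propagate (non)vanishing of $\Hom(E,-)$ by the monotonicity of Lemma \ref{Hom-domains} from the two walls of the quadrant, then settle the walls by hand --- is exactly the argument the authors are gesturing at. Your reductions are all sound: (b) for rank-four $X$ is (a) with the roles exchanged, the walls together with their $(\x_4)$-diagonals exhaust the quadrant, and the complement is swept out by $(-\x_4)$ from the two bordering walls. For the sufficiency base case you could bypass the sectional-path/standardness input (which the paper never establishes for $\ACM\X$) by using Proposition \ref{ext-pullback}, which exhibits an explicit map $E\langle\x\rangle\to E\langle\x+\y\rangle$ restricting to the identity on the common subobject $L(\w)$, together with the visibly nonzero component $E\to\OO(-\x_4)$ of $\beta$ in \eqref{2-extension bundle exact}; but your route also works.

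The genuine gap is in the necessity base case. You propose to kill $\Hom(E,W)$ for $W$ on the bordering walls by showing the relevant line-bundle Homs are graded pieces $R_{\y-\x}$ with $\y-\x\not\ge 0$. This fails on the wall $\{j+k=i+k_i-1\}$ at the points with $j\le i-1$ (including the line bundles $\langle -1,i+k_i\rangle$). Normalizing $k_i=0$ and taking $W=\OO(\w+i\x_4+g)$, the sequence \eqref{2-extension bundle exact} gives $\Hom(K,W)=0$ by degree reasons, but $\Hom(\OO(\w),W)=R_{i\x_4}\neq 0$, so the long exact sequence only yields $\dim\Hom(E,W)\le 1$; the vanishing is equivalent to the injectivity of the connecting map, i.e.\ to the nonsplitness of the pushout of the defining extension along $X_4^{i}$, which is not a statement about a graded piece of $R$ being zero. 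The same one-dimensional ambiguity appears for the rank-four wall points $W=\langle j,i-1-j\rangle$ with $j\le i-1$, since $\Hom(\OO(\w),\OO(\w+(i-1-j)\x_4))\neq 0$. The repair is to switch to the coextension presentation: by Proposition \ref{L-action on 2-extension bundles} one has $E\langle i\x_4\rangle\simeq F_{\OO(-\x_3)}\langle i\x_4\rangle$, and in the sequence \eqref{2-coextension bundle exact} the only surviving component of $\Hom(\OO(-\x_3)[i\x_4],W)$ is $R_{i\x_4}$ on the summand $\OO(\w)$; that map does not descend to $\coker\gamma'$ because its composite with $\gamma'$ is multiplication by $X_3X_4^{i}\neq 0$ in the domain $R$. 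Without this (or an equivalent computation showing the connecting homomorphism is injective) the necessity direction is incomplete as written.
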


			\begin{lemma}\label{Domain property} Let $X,Y$ be two indecomposable ACM bundles of rank four. The following statements are equivalent.
			\begin{itemize}
				\item[(a)] $X \in \Dom^{+}_{\triangle}(Y)$.
				\item[(b)] $\Dom^{+}_{\triangle}(X) \subset \Dom^{+}_{\triangle}(Y)$.
				\item[(c)] $Y\in \Dom^{+}_{\bigtriangledown}(X)$.
				\item[(d)] $\Dom^{+}_{\bigtriangledown}(Y) \subset \Dom^{+}_{\bigtriangledown}(X)$.
			\end{itemize}				
			In this case, we have $$\Dom_{\triangle,L}(Y)\subset \Dom_{\triangle,L}(X) \  \text{and}\  \Dom_{\bigtriangledown,L}(X)\subset \Dom_{\bigtriangledown,L}(Y).$$
		\end{lemma}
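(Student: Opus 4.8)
The plan is to reduce all four conditions to the explicit combinatorial descriptions of the domains recorded in Setting \ref{setting 1}. Write $X=\langle j,k_j\rangle$ and $Y=\langle i,k_i\rangle$ with $0\le i,j\le p-2$. Reparametrizing the union in (\ref{Dom 3}) by setting $a=i-n_1-n_2$ and $b=k_i+n_2$, one reads off that $\langle a,b\rangle$ (with the boundary value $a=-1$ interpreted as a line bundle as in Setting \ref{setting 1}) lies in $\Dom^{+}_{\triangle}(Y)$ exactly when
\[ a\ge -1,\qquad b\ge k_i,\qquad a+b\le i+k_i. \]
Likewise, reparametrizing (\ref{Dom 2}) by $a=j+n_1+n_2$, $b=k_j-n_2$ shows that $\langle a,b\rangle$ lies in $\Dom^{+}_{\bigtriangledown}(X)$ exactly when $a\le p-1$, $b\le k_j$ and $a+b\ge j+k_j$. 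Substituting $X=\langle j,k_j\rangle$ into the first description and $Y=\langle i,k_i\rangle$ into the second, one finds that both (a) and (c) are equivalent to the same pair of inequalities
\[ k_i\le k_j \qquad\text{and}\qquad j+k_j\le i+k_i, \]
which I denote $(\star)$; hence (a)$\Leftrightarrow$(c) is immediate.

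For (a)$\Leftrightarrow$(b) and (c)$\Leftrightarrow$(d), the key observation is that each region is a lattice cone whose apex is the defining bundle itself. Taking $n_1=n_2=0$ in (\ref{Dom 3}) gives $X\in\Dom^{+}_{\triangle}(X)$ (using $\langle j,k_j\rangle=\langle j,k_j\rangle_g$ from Proposition \ref{L-action on 2-extension bundles}), and symmetrically $Y\in\Dom^{+}_{\bigtriangledown}(Y)$. Thus if $\Dom^{+}_{\triangle}(X)\subseteq\Dom^{+}_{\triangle}(Y)$ then the apex $X$ lies in $\Dom^{+}_{\triangle}(Y)$, which is exactly (a); this gives (b)$\Rightarrow$(a). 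Conversely, assuming $(\star)$, the half-plane description above shows that each defining inequality of $\Dom^{+}_{\triangle}(Y)$ is implied by those of $\Dom^{+}_{\triangle}(X)$ (the bound $b\ge k_j$ forces $b\ge k_i$, and $a+b\le j+k_j$ forces $a+b\le i+k_i$), so $\Dom^{+}_{\triangle}(X)\subseteq\Dom^{+}_{\triangle}(Y)$, giving (a)$\Rightarrow$(b). The equivalence (c)$\Leftrightarrow$(d) is handled by the identical argument applied to $\Dom^{+}_{\bigtriangledown}$.

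Finally, the two line-bundle inclusions follow by comparing the index intervals $I_1,I_2$ attached to $X$ and $Y$ as in (\ref{Dom 1}) and (\ref{Dom 4}). For $\Dom_{\triangle,L}$, condition $(\star)$ is precisely what gives $I_1^{Y}\subseteq I_1^{X}$: the lower endpoints compare via $j+k_j\le i+k_i$ and the upper endpoints via $k_i\le k_j$, whence $\Dom_{\triangle,L}(Y)\subseteq\Dom_{\triangle,L}(X)$. The reverse inclusion $\Dom_{\bigtriangledown,L}(X)\subseteq\Dom_{\bigtriangledown,L}(Y)$ follows from the same two inequalities applied to the intervals $I_2$. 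I expect the only real care to be in the bookkeeping of the reparametrization $\langle i-n_1-n_2,\,k_i+n_2\rangle\leftrightarrow\langle a,b\rangle$ and in the handling of the boundary terms $\langle -1,\cdot\rangle$ and $\langle p-1,\cdot\rangle$; beyond that there is no substantive obstacle, since once the cone/half-plane pictures are set up all four statements collapse to $(\star)$.
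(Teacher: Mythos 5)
Your proposal is correct and takes essentially the same route as the paper's (much terser) proof: both reduce everything to the explicit parametrizations \eqref{Dom 3}, \eqref{Dom 2} and \eqref{Dom 4}, with all four conditions collapsing to the pair of inequalities $k_i\le k_j$, $j+k_j\le i+k_i$ (in your labelling) and the line-bundle inclusions read off from the index intervals. Your half-plane/apex bookkeeping, including the boundary terms $\langle -1,\cdot\rangle$ and $\langle p-1,\cdot\rangle$, checks out.
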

		\begin{proof} The first part follows directly from  (\ref{Dom 3}) and   (\ref{Dom 2}).
			For the second part, we only show the first inclusion, since the other one can be shown similarly. Note that there exist $0\le i,j \le p-2$ and $k_i,k_j\in \Z$, such that $X=\langle i,k_i \rangle$ and $Y=\langle j,k_j \rangle$. Since $X \in \Dom^{+}_{\triangle}(Y)$, we have $k_j\le k_i$ and $ i+k_i\le j+k_j$. By (\ref{Dom 4}), we have $\Dom_{\triangle,L}(Y)\subset \Dom_{\triangle,L}(X)$. Thus the assertion follows.
		\end{proof}

		\begin{corollary}\label{Domain corollary}
			Let $X,Y$ be two indecomposable ACM bundles of rank four. 
			\begin{itemize}
				\item[(a)] If $X \in \Dom^{+}_{\bigtriangledown}(E)$ and $Y \in \Dom^{+}_{\triangle}(E)$ hold, then $X \oplus Y$ is rigid.
				\item[(b)] We have $X\in \Dom^{\ast}(Y)$ if and only if $Y\in \Dom^{\ast}(X)$, where $\ast \in \{\emptyset,+,-\}$.
			\end{itemize}		
		\end{corollary}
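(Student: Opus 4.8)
The plan is to reduce everything to the combinatorial machinery already established, handling part~(b) by a symmetry argument built on the shift autoequivalence, and part~(a) by the transitivity packaged into Lemma~\ref{Domain property}. Neither part requires reopening the $\Hom$-computations from scratch.

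For part~(b) I would first dispose of the case $\ast=\emptyset$: by definition $Z\in\Dom(W)$ means precisely that $Z\oplus W$ is rigid, and since $X\oplus Y\cong Y\oplus X$ this condition is manifestly symmetric in $X$ and $Y$. For $\ast=+$ the decisive observation is that the degree shift $(\x_4)\colon\coh\X\to\coh\X$ is an autoequivalence, so applying it (resp.\ its inverse) to morphism spaces yields $\Hom(X,Y(-\x_4))\cong\Hom(X(\x_4),Y)$ and $\Hom(Y(\x_4),X)\cong\Hom(Y,X(-\x_4))$, using $Y(-\x_4)(\x_4)=Y$. Feeding these two isomorphisms into the definition of $\Dom^{+}$, the condition ``$X\in\Dom(Y)$, $\Hom(X,Y(-\x_4))=0$ and $\Hom(Y(\x_4),X)=0$'' becomes verbatim the condition ``$Y\in\Dom(X)$, $\Hom(Y,X(-\x_4))=0$ and $\Hom(X(\x_4),Y)=0$'', i.e.\ $Y\in\Dom^{+}(X)$; here I also invoke the $\ast=\emptyset$ symmetry just proved. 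Finally $\ast=-$ follows by complementation, since $\Dom^{-}(W)=\Dom(W)\setminus\Dom^{+}(W)$ and both $\Dom$ and $\Dom^{+}$ are now known to be symmetric.

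For part~(a) I would translate the two hypotheses through the equivalences of Lemma~\ref{Domain property}. From $X\in\Dom^{+}_{\bigtriangledown}(E)$ the equivalence (a)$\Leftrightarrow$(c), applied with the roles of the lemma's two objects taken by $E$ and $X$, gives $E\in\Dom^{+}_{\triangle}(X)$, while $Y\in\Dom^{+}_{\triangle}(E)$ is a hypothesis. Now the inclusion form (a)$\Leftrightarrow$(b) of the same lemma yields $\Dom^{+}_{\triangle}(Y)\subseteq\Dom^{+}_{\triangle}(E)\subseteq\Dom^{+}_{\triangle}(X)$, and reading this back through (b)$\Leftrightarrow$(a) produces $Y\in\Dom^{+}_{\triangle}(X)$. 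Since $\Dom^{+}_{\triangle}(X)\subseteq\Dom^{+}(X)\subseteq\Dom(X)$, this says exactly that $Y\oplus X$ is rigid.

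The proof is largely bookkeeping once Lemmas~\ref{Hom-domains 2} and~\ref{Domain property} are available; the only step demanding genuine care is part~(b), where one must recognise that the two pairs of $\Hom$-spaces appearing in the definitions of $X\in\Dom^{+}(Y)$ and of $Y\in\Dom^{+}(X)$ are interchanged precisely by the shift $(\x_4)$. If one prefers, this can equally be checked combinatorially via Lemma~\ref{Hom-domains 2}: writing $X=\langle i,k_i\rangle$ and $Y=\langle j,k_j\rangle$, both $\Hom(X,Y(-\x_4))$ and $\Hom(X(\x_4),Y)$ are nonzero under the single condition $k_i<k_j$ and $i+k_i<j+k_j$, while both $\Hom(Y(\x_4),X)$ and $\Hom(Y,X(-\x_4))$ are nonzero under $k_j<k_i$ and $j+k_j<i+k_i$, so the defining vanishing requirements coincide. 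The conceptual content of part~(a)—that $\Dom^{+}_{\triangle}$ and $\Dom^{+}_{\bigtriangledown}$ organise the rank-four bundles into compatible triangular preorders—has already been absorbed into Lemma~\ref{Domain property}, so no further obstacle remains.
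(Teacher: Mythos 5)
Your proposal is correct and follows essentially the same route as the paper: part (a) is exactly the paper's argument (chaining the inclusions of Lemma~\ref{Domain property} to land $Y$ in $\Dom^{+}_{\triangle}(X)$), and part (b) — which the paper dismisses as an immediate consequence of Lemma~\ref{Domain property} — you verify directly from the definition of $\Dom^{+}$ via the shift autoequivalence $(\x_4)$, which is a legitimate and slightly more explicit way to see the same symmetry. No gaps.
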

		\begin{proof} (a) By Lemma \ref{Domain property}, $X \in \Dom^{+}_{\bigtriangledown}(E)$ implies $\Dom^{+}_{\triangle}(E) \subset \Dom^{+}_{\triangle}(X)$. Then we get $Y \in \Dom^{+}_{\triangle}(X)$ by our assumption. Thus the assertion follows. 
			
			(b) This is an immediate consequence of  Lemma \ref{Domain property}.
		\end{proof}

		\subsection{Left and right roofs} In this subsection, we present a method to generate an indecomposable ACM bundle of rank four from another such bundle together with certain line bundles. We introduce the following useful notation. 
		
		
		Throughout this section, fix an indecomposable ACM bundle $$E:=\langle i,k_i \rangle \text{\;\; with\;\;} 0\le i \le p-2,\; k_i\in \Z,$$ and         $$g\in G:=\{0,\x_1-\x_2,\x_1-\x_3,\x_2-\x_3\}.$$
		
		We introduce the following useful notation. For $\x \in \L$, the interval $[\x,\x-2\w]=[\x,\x+(p+2)\x_4]$ is called a \emph{roof} of $\x$. We consider the roofs contained in $\Dom_{L}(E)$. 
			 
		For  $1\le m \le i$ and $1\le n \le p-i-2$, setting		
				\begin{align*}
						\mathcal{L}^{}_m(E, g)&:=[l_m+g, l_m+(p+2)\x_4+g],\\
						\mathcal{R}^{}_{n}(E, g)&:=[r_n+g, r_n+(p+2)\x_4+g],						
					\end{align*}
					where $l_m:=(k_i+m-p-2)\x_4$ and $r_n:=\w+(i+k_i+n-p)\x_4$.
		We call $\mathcal{L}^{}_m(E,g)$ (resp. $\mathcal{R}^{}_{n}(E,g)$)  the $m$-th \emph{left roof} (resp. $n$-th \emph{right roof}) of the pair $(E, g)$. If $g=0$, we just write $\mathcal{L}^{}_m(E)$ (resp. $\mathcal{R}^{}_{n}(E)$).

			\begin{proposition}  \cite[Proposition 4.3]{CRW} \label{ext-pullback} 
			Assume $0\le \x \le \x+\y \le (p-2)\x_4$ and $\y=e\x_4$ with $e \ge 0$.
			Consider $y:=X_4^{e}$ as a morphism $L(\x)\to
			L(\x+\y)$. Then there exists a   commutative diagram 	
			\begin{equation*}\label{pullback and pushout diagram}
				\begin{tikzcd}[column sep=1.6em]
					\eta_{\x}:\quad\,\,	0\rar &	L(\w)\rar{\alpha}\arrow[d,equal]&E_L\langle \x \rangle \rar{\beta}\dar{}&L\langle \x \rangle  \rar{\gamma}\dar{y'}&L(\x)\arrow[d,"y"] \rar& 0\,\\
					\eta_{\x+\y}:\,\,	0\rar&	L(\w)\rar{\alpha}&E_{L}\langle \x+\y \rangle \rar{\beta}&L\langle \x+\y \rangle\rar{\gamma}&L(\x+\y) \rar& 0,
				\end{tikzcd}
			\end{equation*}
			with exact rows, where $y':={\rm diag}(y, y, y, 1)$.	
		\end{proposition}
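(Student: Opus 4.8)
The plan is to construct the asserted map of four-term exact sequences in two halves, splicing them along the kernels $K_{\x}:=\ker(\gamma\colon L\langle\x\rangle\to L(\x))$ and $K_{\x+\y}:=\ker(\gamma\colon L\langle\x+\y\rangle\to L(\x+\y))$.

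First I would verify that the rightmost square commutes with the prescribed vertical maps. Writing $\x=\lambda\x_4$ and $\x+\y=(\lambda+e)\x_4$, so that $\gamma=((X_i)_{i=1}^3,X_4^{\lambda+1})$ for $\x$ and $((X_i)_{i=1}^3,X_4^{\lambda+e+1})$ for $\x+\y$, I would simply compare the entries of $y\circ\gamma$ and $\gamma\circ y'$ with $y=X_4^{e}$ and $y'=\mathrm{diag}(y,y,y,1)$: on each of the first three summands both composites give $X_iX_4^{e}$, and on the last both give $X_4^{\lambda+e+1}$, so the square commutes. Consequently $y'$ carries $K_{\x}$ into $K_{\x+\y}$ and induces $\bar y\colon K_{\x}\to K_{\x+\y}$, producing a morphism $(\bar y,y',y)$ of the short exact sequences $\eta_{1,\x}$ and $\eta_{1,\x+\y}$.

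The core step is to lift this across the extension sequences $\eta_{2,\x}\colon 0\to L(\w)\to E_L\langle\x\rangle\to K_{\x}\to 0$ and its $\x+\y$ analogue, keeping the identity on $L(\w)$. I would apply $\Hom(-,L(\w))$ to $\eta_{1,\x}$: since $\Ext^1$ between line bundles vanishes by \eqref{extension spaces between line bundles}, the connecting map $\delta_{\x}\colon\Ext^1(K_{\x},L(\w))\to\Ext^2(L(\x),L(\w))$ is injective, and as both spaces are one-dimensional—the source is $\mathbf{k}$ by construction, while $\Ext^2(L(\x),L(\w))\cong D(R_{\x})=D(\mathbf{k}\,X_4^{\lambda})$ again by \eqref{extension spaces between line bundles}—it is an isomorphism, and likewise $\delta_{\x+\y}$. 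Naturality of $\delta$ along $(\bar y,y',y)$ then identifies $\bar y^{*}$ on the $\Ext^1$ groups with $y^{*}$ on the $\Ext^2$ groups. Under the above identifications $y^{*}$ is the $\mathbf{k}$-dual of multiplication $X_4^{e}\colon R_{\x}\to R_{\x+\y}$, i.e. $\mathbf{k}\,X_4^{\lambda}\to\mathbf{k}\,X_4^{\lambda+e}$, which is an isomorphism exactly because $0\le\lambda\le\lambda+e\le p-2$ keeps both pieces one-dimensional with nonzero image. Hence $\bar y^{*}$ is an isomorphism, so $\bar y^{*}[\eta_{2,\x+\y}]$ is a nonzero element of $\Ext^1(K_{\x},L(\w))$.

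Finally I would read off the lift: the pullback $\bar y^{*}\eta_{2,\x+\y}$ is a nonsplit extension of $K_{\x}$ by $L(\w)$, hence isomorphic to $\eta_{2,\x}$ (as $E_L\langle\x\rangle$ is the unique such extension up to isomorphism), and the canonical pullback morphism supplies $v\colon E_L\langle\x\rangle\to E_L\langle\x+\y\rangle$ that is the identity on the sub $L(\w)$ and induces $\bar y$ on the quotient. A brief chase, using $y'\iota_{\x}=\iota_{\x+\y}\bar y$ for the kernel inclusions $\iota$, checks the two remaining squares, and splicing the two halves yields the commutative diagram with exact rows. I expect the main obstacle to be precisely the third paragraph: one must rule out that $\bar y$ splits $\eta_{2,\x+\y}$, and transporting the elementary degree-one computation (invertibility of $X_4^{e}$ on the relevant graded pieces) through the connecting isomorphisms is exactly what guarantees this; everything else is formal. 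The only cosmetic point is that matching the left-hand map to the identity on the nose rather than to a nonzero scalar is free, since $E_L\langle\x\rangle$ is defined only up to isomorphism.
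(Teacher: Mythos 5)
The paper does not prove this proposition at all: it is imported verbatim from \cite[Proposition 4.3]{CRW}, so there is no in-paper argument to compare yours against. Judged on its own, your proof is correct and is the argument one would expect: the rightmost square commutes entry-by-entry (the last entries $X_4^{\lambda+1}$ and $X_4^{\lambda+e+1}$ of the two $\gamma$'s differ exactly by $y=X_4^{e}$, while $y'$ is the identity on the summand $L(-\x_4)$), so $y'$ induces $\bar y$ on kernels; the connecting maps $\Ext^1(K,L(\w))\to\Ext^2(L(\x),L(\w))$ are isomorphisms of one-dimensional spaces because $\Ext^1$ between line bundles vanishes and $R_{\lambda\x_4}=\mathbf{k}X_4^{\lambda}$ for $0\le\lambda\le p-2$; and naturality identifies $\bar y^{*}$ with the dual of $X_4^{e}\colon R_{\x}\to R_{\x+\y}$, which is nonzero precisely because $\lambda+e\le p-2$, so the pullback of $\eta_{2,\x+\y}$ along $\bar y$ is nonsplit and may be taken as the defining extension for $E_L\langle\x\rangle$. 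Your closing remark correctly disposes of the only delicate point, namely that the scalar relating the pullback class to a previously chosen $\eta_{2,\x}$ is absorbed into the freedom in the definition of the $2$-extension bundle, so the left vertical map can be taken to be the identity on the nose.
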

			
		Recall that \emph{vector bundle duality} $$(-)^{\vee}: \vect\X \to \vect\X, \ X\mapsto \Hhom{}{X}{\OO},$$ 
		which sends line bundle $\OO(\x)$ to $\OO(-\x)$ for any $\x \in \L$.

			For simplicity, for a subset $I\subset \L$, we write $$\OO(I):=\{\OO(\x) \mid \x\in I\}.$$
			
		\begin{lemma}\label{ext-bundle thick}  For $0\leq i\leq p-2$,  the following assertions hold.
			
			\begin{itemize} 
				\item[(a)] If $i\neq 0$, then we have 
				$$\langle i-1,k_i\rangle \in \thick\{E, \OO(\mathcal{L}_{i}(E, g))\} \text{\;\;and\;\;} \langle i-1,k_i+1\rangle \in \thick\{E, \OO(\mathcal{L}_{1}(E, g))\}.$$
				\item[(b)] If $i\neq p-2$, then we have  $$\langle i+1,k_i\rangle \in \thick\{E,\OO(\mathcal{R}_{1}(E, g))\} 
				\text{\;\;and\;\;}\langle i+1,k_i-1\rangle \in \thick\{E, \OO(\mathcal{R}_{p-i-2}(E, g))\}.$$
			\end{itemize}
		\end{lemma}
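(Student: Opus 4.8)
The plan is to deduce each of the four containments from a short exact sequence in $\coh\X$ that places the target rank-four bundle and $E$ together with a single torsion sheaf, and then to show that this torsion sheaf lies, inside $\DDD^{\bo}(\coh\X)$, in the thick subcategory generated by the line bundles of the prescribed roof. Since a thick subcategory is closed under shifts, cones, extensions and direct summands, such a sequence yields the stated membership at once. It is convenient to write $S_{\z}:=\coker\!\big(X_4\colon\OO(\z)\to\OO(\z+\x_4)\big)$ for $\z\in\L$, so that $S_{\z}\in\thick\{\OO(\z),\OO(\z+\x_4)\}$. Two dual engines produce the sequences: for (a) the extension presentation (\ref{2-extension bundle exact}) together with the pullback ladder of Proposition \ref{ext-pullback}, and for (b) the coextension presentation (\ref{2-coextension bundle exact}) together with the pushout ladder dual to it (obtained via $(-)^{\vee}$, equivalently via Proposition \ref{L-action on 2-extension bundles}(a)). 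The twist by $g\in G$ is harmless: twisting a finished sequence by $g$ and invoking $\langle i,k_i\rangle(g)\cong\langle i,k_i\rangle$ and $\langle i\pm1,\ast\rangle(g)\cong\langle i\pm1,\ast\rangle$ from Proposition \ref{L-action on 2-extension bundles} upgrades the $g=0$ statement to the versions with $\OO(\mathcal{L}_m(E,g))$ and $\OO(\mathcal{R}_n(E,g))$, so I take $g=0$ throughout.

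I would treat the first assertion of (a) as the model. Writing $E=E_L\langle i\x_4\rangle$ with $L=\OO(k_i\x_4)$, apply Proposition \ref{ext-pullback} with $\x=(i-1)\x_4$ and $\y=\x_4$; this is legitimate because $i\neq0$ gives $\x\ge0$ and $i\le p-2$ gives $\x+\y\le(p-2)\x_4$. The vertical arrows of the resulting ladder are the identity on $L(\w)$, a map $\phi\colon\langle i-1,k_i\rangle\to\langle i,k_i\rangle=E$, the map $y'=\operatorname{diag}(X_4,X_4,X_4,1)$ on the middle term, and $y=X_4$ on $L(\x)$. Splitting each four-term row at its middle and applying the snake lemma twice shows $\phi$ is a monomorphism and identifies $T:=\coker\phi$ as the kernel of a surjection $S_{(k_i+i-1)\x_4-\x_1}\oplus S_{(k_i+i-1)\x_4-\x_2}\oplus S_{(k_i+i-1)\x_4-\x_3}\twoheadrightarrow S_{(k_i+i-1)\x_4}$. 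A direct check in $\L$ shows that all eight degrees occurring here lie in $\mathcal{L}_i(E)=[(k_i+i-p-2)\x_4,(k_i+i)\x_4]$; hence $T\in\thick\{\OO(\mathcal{L}_i(E))\}$, and the conflation $0\to\langle i-1,k_i\rangle\to E\to T\to0$ gives $\langle i-1,k_i\rangle\in\thick\{E,\OO(\mathcal{L}_i(E))\}$.

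The first assertion of (b) is the mirror image, and it is here that the coextension presentation is essential: the naive extension-side map $E\hookrightarrow\langle i+1,k_i\rangle$ has cokernel supported at the top, on line bundles of degree near $(k_i+i+1)\x_4$, which do \emph{not} lie in $\mathcal{R}_1(E)$. Instead I would rewrite both $E$ and $\langle i+1,k_i\rangle$ as coextension bundles $F_M\langle\x'\rangle$; a short computation with Proposition \ref{L-action on 2-extension bundles} produces coextension data sharing the same quotient $\OO((k_i-1)\x_4)$ and the same term $\OO(k_i\x_4+\w)$, differing only in the subobject-side terms $M(\w),M(\w+\x_j)$ by multiplication by $X_4$. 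The pushout ladder then yields a conflation relating $E$ and $\langle i+1,k_i\rangle$ whose torsion term is filtered by $S_{r_1+\x_j}$ $(j=1,2,3)$ and $S_{r_1}$, with $r_1=\w+(i+k_i+1-p)\x_4$; all of these degrees lie in $\mathcal{R}_1(E)=[r_1,r_1+(p+2)\x_4]$, giving $\langle i+1,k_i\rangle\in\thick\{E,\OO(\mathcal{R}_1(E))\}$.

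The two column-shifted assertions, $\langle i-1,k_i+1\rangle$ and $\langle i+1,k_i-1\rangle$, are the hard part and are where I expect the main obstacle. Here $E$ and the target sit in different $\w$-columns, and a single comparison map no longer suffices: a computation in $K_0(\coh\X)$ gives the cokernel of the natural map $E\to\langle i-1,k_i+1\rangle$ the class $[S_{(k_i-1)\x_4}]+[S_{k_i\x_4+\w}]$, so besides a ``good'' torsion sheaf with degrees in $\mathcal{L}_1(E)$ it carries an extra sheaf $S_{k_i\x_4+\w}$ supported at the $\w$-shifted degrees $k_i\x_4+\w,(k_i+1)\x_4+\w$, which are not in $\mathcal{L}_1(E)$. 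The plan is to absorb this extra piece through $E$ itself: $\OO(k_i\x_4+\w)=L(\w)$ is the subobject in the extension presentation of $E$, so one recovers $L(\w)$, and then $S_{k_i\x_4+\w}$, from $E$ together with roof line bundles, using the multiplication maps $X_1,X_2,X_3$ and the defining relation of $R$. The delicate points — constructing the comparison conflation precisely, verifying that its cokernel is exactly the asserted extension of torsion sheaves and nothing larger, and checking exhaustively that every auxiliary line bundle and torsion generator either lands in the designated roof or is supplied by $E$ — constitute the bulk of the work; the case $\langle i+1,k_i-1\rangle$ is then handled by the coextension-side mirror of this argument, with the roof $\mathcal{R}_{p-i-2}(E)$.
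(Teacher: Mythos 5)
Your treatment of the \emph{first} assertion in each of (a) and (b) is essentially the paper's argument: the same ladder from Proposition \ref{ext-pullback} (resp.\ its coextension mirror), with the bookkeeping done via a comparison monomorphism and the snake lemma instead of reading the four-term rows directly. That part is fine, and your observation that the naive extension-side map fails for (b) because its cokernel sits outside $\mathcal{R}_1(E)$ is correct.

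The second assertions, however, are not proved. You explicitly defer ``the bulk of the work,'' and the one concrete mechanism you name for absorbing the extra torsion piece $S_{\w+k_i\x_4}$ --- recovering $\OO(\w+k_i\x_4)$ as the subobject of the extension presentation $0\to \OO(\w+k_i\x_4)\to E\to L\langle i\x_4\rangle\to L(i\x_4)\to 0$ --- does not work with only $\OO(\mathcal{L}_1(E))$ available: the terms $\OO((i+k_i)\x_4-\x_k)$ and $\OO((i+k_i)\x_4)$ of that presentation lie outside $\mathcal{L}_1(E)=[(k_i-p-1)\x_4,(k_i+1)\x_4]$ as soon as $i\ge 2$ (e.g.\ $(i+k_i)\x_4-\x_k-(k_i+1)\x_4=(i-1)\x_4-\x_k\not\le 0$ for $i\ge 2$), so this sequence cannot place $\OO(\w+k_i\x_4)$ in $\thick\{E,\OO(\mathcal{L}_1(E))\}$. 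The paper avoids the problem entirely: it applies vector bundle duality $(-)^{\vee}$ and a degree shift to the \emph{same} ladder already used for the first assertion, converting it into a coextension ladder; the identification $F\langle\x\rangle(\w+\x_4)\simeq E\langle\x\rangle$ from Proposition \ref{L-action on 2-extension bundles} then exhibits $E$ and $\langle i-1,k_i+1\rangle$ in four-term sequences whose auxiliary line bundles all lie in $\mathcal{L}_1(E)$, and the only non-roof line bundle that appears, $\OO(\w+(i+k_i+1)\x_4)$, is the \emph{quotient} end of $E$'s coextension presentation and is therefore generated by $E$ and $\OO(\mathcal{L}_1(E))$. You need this duality-plus-shift step (or an equivalent substitute); as written, half of the lemma is missing and the proposed repair for it is based on a presentation whose terms are not in the designated roof.
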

		\begin{proof} We only show (a) since (b) can be shown dually. Without loss of generality, we can assume $g=0$ and $k_i=0$. In this case, $E=\langle i,0 \rangle$.

			Applying Proposition \ref{ext-pullback} to the data $L=\OO$, $\x=(i-1)\x_4$ and $\y=\x_4$, we obtain the following commutative diagram.
			\begin{equation} \label{ext-bundle diag 1}
				\begin{tikzcd}[column sep=1.6em]
						0\rar &	\OO(\w)\rar{}\arrow[d,equal]&\langle i-1,0 \rangle \rar{}\dar{}
						&\OO\langle (i-1)\x_4 \rangle  \rar{}\dar{}&\OO((i-1)\x_4)\arrow[d,"X_4"] \rar& 0\,\\
						0\rar&	\OO(\w)\rar{}& \langle i,0 \rangle \rar{}
						&\OO \langle i\x_4 \rangle \rar{}&\OO(i\x_4) \rar& 0
				\end{tikzcd}
			\end{equation}		
			It is straightforward to check that each object in the rightmost square belongs to $\add \OO(\mathcal{L}_{i}(E))$, which implies that $\OO(\w) \in \thick\{E, \OO(\mathcal{L}_{i}(E))\}$.  Therefore we have $\langle i-1,0\rangle \in \thick\{E, \OO(\mathcal{L}_{i}(E))\}$.

			By applying vector bundle duality and the degree shift by $\w+i\x_4$ to the diagram (\ref{ext-bundle diag 1}), we obtain a commutative diagram
			\begin{equation*}
				\begin{tikzcd}[column sep=1.6em]
					0\rar &	\OO(\w)\rar{}\ar[d,"{X_4}"]&\OO[i\x_4] \rar{}\dar{}
					&  F\langle i\x_4 \rangle \rar{}\dar{}&\OO(i\x_4)\arrow[d,equal] \rar& 0\,\\
					0\rar&	\OO(\w+\x_4) \rar{}& \OO[(i-1)\x_4](\x_4) \rar{}
					&F \langle (i-1)\x_4 \rangle(\x_4) \rar{}&\OO(i\x_4)\ar[r]& 0.
				\end{tikzcd}
			\end{equation*}
			 Note that we have $F\langle \x \rangle (\w+\x_4) \simeq E\langle \x \rangle$ by Proposition \ref{L-action on 2-extension bundles}. It is easy to check that each object in the leftmost square by applying degree shift $\w+\x_4$  belongs to $\add \OO(\mathcal{L}_{1}(E))$, which implies that $\OO(\w+(i+1)\x_4) \in \thick\{\langle i,0\rangle, \OO(\mathcal{L}_{1}(E))\}$.  Hence we have $\langle i-1,1\rangle \in \thick\{\langle i,0\rangle, \OO(\mathcal{L}_{1}(E)) \}$, and the assertion follows.
		\end{proof}
		
		\begin{corollary} \label{ext-generated}   Let $0\leq i\leq p-2$. Then  the following assertions hold.
			\begin{itemize} 
				\item[(a)] Assume $i\neq 0$. For any  $n_1,n_2 \ge 0$ with $n_1+n_2\le i$, we have $$\langle i-n_1-n_2,k_i+n_2\rangle \in \thick\{E, \OO(\bigcup_{\ell=1}^{n_2} \mathcal{L}_{\ell}(E, g)),  \OO(\bigcup_{\ell=1}^{n_1} \mathcal{L}_{i+1-\ell}(E, g))\}.$$
				\item[(b)]  Assume $i\neq p-2$.
				For any  $n_1,n_2 \ge 0$ with $n_1+n_2\le p-i-2$, we have $$\langle i+n_1+n_2,k_i-n_2 \rangle \in \thick\{E, \OO( \bigcup_{\ell=1}^{n_1} \mathcal{R}_{\ell}(E, g)),  \OO(\bigcup_{\ell=1}^{n_2} \mathcal{R}_{p-i-1-\ell}(E, g))\}.$$
			\end{itemize}
		\end{corollary}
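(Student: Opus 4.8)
The plan is to obtain the statement by iterating Lemma~\ref{ext-bundle thick}, via induction on $N := n_1 + n_2$. I would treat part~(a) in detail; part~(b) is entirely dual, replacing the two moves of Lemma~\ref{ext-bundle thick}(a) and the left roofs by those of Lemma~\ref{ext-bundle thick}(b) and the right roofs. The two assertions of Lemma~\ref{ext-bundle thick}(a), applied with the original datum $E$ replaced by an arbitrary rank-four bundle $\langle i', k'\rangle$ with $i' \neq 0$, furnish exactly two ``one-step'' moves: I may pass to $\langle i'-1, k'\rangle$ by adjoining $\OO(\mathcal{L}_{i'}(\langle i', k'\rangle, g))$, or to $\langle i'-1, k'+1\rangle$ by adjoining $\OO(\mathcal{L}_1(\langle i', k'\rangle, g))$. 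Since Lemma~\ref{ext-bundle thick} already holds for arbitrary $g$ and $k_i$, no normalisation is needed and I keep $g, k_i$ general throughout.

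The crucial bookkeeping device is that the roofs of any intermediate bundle are, after a shift of index, roofs of the original $E = \langle i, k_i\rangle$. Indeed, from $l_m = (k_i + m - p - 2)\x_4$ the left endpoint of $\mathcal{L}_m$ depends only on the sum $k_i + m$, whence $\mathcal{L}_m(\langle i', k'\rangle, g) = \mathcal{L}_{m + k' - k_i}(E, g)$ whenever both sides are defined (dually $\mathcal{R}_n(\langle i', k'\rangle, g) = \mathcal{R}_{n + i' + k' - i - k_i}(E, g)$, as $r_n$ depends only on $i' + k' + n$). The induction then runs as follows. The case $N = 0$ is trivial. If $n_2 \geq 1$, the inductive hypothesis for $(n_1, n_2 - 1)$ places $\langle i - n_1 - n_2 + 1, k_i + n_2 - 1\rangle$ in the asserted thick subcategory with only $\OO(\mathcal{L}_{n_2}(E,g))$ missing; applying the $\mathcal{L}_1$-move to this bundle produces $\langle i - n_1 - n_2, k_i + n_2\rangle$ at the cost of $\OO(\mathcal{L}_1(\langle i - n_1 - n_2 + 1, k_i + n_2 - 1\rangle, g))$, which the reindexing identity identifies with $\OO(\mathcal{L}_{n_2}(E,g))$, completing $\bigcup_{\ell=1}^{n_2}\mathcal{L}_\ell(E,g)$. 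If $n_2 = 0$, I instead invoke the hypothesis for $(n_1 - 1, 0)$ and apply the $\mathcal{L}_{i'}$-move to $\langle i - n_1 + 1, k_i\rangle$; its roof reindexes to $\mathcal{L}_{i+1-n_1}(E,g)$, the last term of $\bigcup_{\ell=1}^{n_1}\mathcal{L}_{i+1-\ell}(E,g)$. In either case I close the step using the fact that if $A \in \thick\{B, C\}$ and both $B$ and $C$ lie in a thick subcategory $\mathcal{S}$, then $A \in \mathcal{S}$, so the freshly produced generator is absorbed into the running subcategory.

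The only genuine obstacle is to certify that every invocation of Lemma~\ref{ext-bundle thick} is legitimate, i.e.\ that all intermediate first-indices and all roof indices remain in range. Along the whole chain the first index ranges over $[\,i - n_1 - n_2,\, i\,]$, and a move is only ever applied at a bundle whose first index is $\geq i - n_1 - n_2 + 1 \geq 1$, so the hypothesis $i' \neq 0$ of Lemma~\ref{ext-bundle thick}(a) is always met; meanwhile the roof indices that occur are precisely $1, \dots, n_2$ and $i, i-1, \dots, i+1-n_1$, all lying in $[1, i]$ exactly because $n_1 + n_2 \leq i$. For part~(b) the dual constraint $n_1 + n_2 \leq p - i - 2$ secures the analogous range conditions and the hypothesis $i' \neq p - 2$. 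Once these elementary range checks are recorded, the induction closes and the corollary follows.
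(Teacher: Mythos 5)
Your proof is correct and follows essentially the same route as the paper: induction on $n_1+n_2$, applying the two one-step moves of Lemma~\ref{ext-bundle thick}(a) to the intermediate bundle and identifying its roofs with roofs of $E$ via the reindexing $\mathcal{L}_m(\langle i',k'\rangle,g)=\mathcal{L}_{m+k'-k_i}(E,g)$ (the paper reduces $n_1$ first where you reduce $n_2$ first, a cosmetic difference). Your explicit range checks are a welcome addition that the paper leaves implicit.
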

		\begin{proof} 
			We only show (a), as (b) can be shown by a dual argument.
			
			(a) We prove the assertion by induction on $n:=n_1+n_2$. 			
			If $n=0$, this is nothing to show. For $n \ge 1$, without loss of generality we can assume  $n_1 \ge 1$. Let $F:=\langle i-n+1,k_i+n_2\rangle$. By Lemma \ref{ext-bundle thick}(a), we have 
			\begin{align} \label{ext-generated 1}
				\langle i-n,k_i+n_2\rangle \in \thick \{ F, \mathcal{L}_{i-n_1+1}(E,g) \}.
			\end{align}			
		 Indeed, $\mathcal{L}_{i-n_1+1}(E,g)=\mathcal{L}_{i-n+1}(F, g)$. By the induction hypothesis, we have 
		 \begin{align} \label{ext-generated 2}
		 	F \in \thick\{E, \OO(\bigcup_{\ell=1}^{n_2} \mathcal{L}_{\ell}(E, g)), \OO(\bigcup_{\ell=1}^{n_1-1} \mathcal{L}_{i+1-\ell}(E, g))\}.
		 \end{align}
			Combining (\ref{ext-generated 1}) and (\ref{ext-generated 2}), we have the assertion. 
		\end{proof}
		
		The union of all left (resp. right) roofs of the pair $(E,g)$ is denoted by
		\begin{align} \label{the union of roofs}
			\mathcal{L}_{g}(E):=\bigcup_{m=1}^{i} \mathcal{L}_m(E,g) \ \ \text{(resp.} \ \ \mathcal{R}_{g}(E):=\bigcup_{n=1}^{p-i-2} \mathcal{R}_{n}(E,g)).
		\end{align}
		If $g=0$, we just write $\mathcal{L}^{}(E)$ (resp. $\mathcal{R}^{}(E)$) for simplicity. 
		
		\begin{proposition} \label{reback lemma}  For $0\leq i\leq p-2$,  the following assertions hold.
				\begin{itemize} 
				\item[(a)] If $i\neq 0$, then	$\Dom^{+}_{\triangle}(E)\subset \thick\{E, \OO(\mathcal{L}_{g}(E))\}$.
				\item[(b)] If $i\neq p-2$, then	$\Dom^{+}_{\bigtriangledown}(E)\subset \thick\{E, \OO(\mathcal{R}_{g}(E))\}$.		 
			\end{itemize}
		\end{proposition}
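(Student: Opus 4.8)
The plan is to prove both inclusions by reducing them to the already-established Corollary \ref{ext-generated}, which shows that every rank-four bundle in the relevant triangular region lies in the thick subcategory generated by $E$ together with certain specified left (resp.\ right) roofs. Since Proposition \ref{reback lemma}(b) follows from (a) by a dual argument, I will focus on part (a). The key observation is that $\Dom^{+}_{\triangle}(E)$ consists of two kinds of objects: the rank-four bundles $\langle i-n_1-n_2, k_i+n_2\rangle_g$ appearing in the description (\ref{Dom 3}), and the line bundles in $\Dom_{\triangle,L}(E)$ from (\ref{Dom 4}). First I would handle the rank-four bundles: by Corollary \ref{ext-generated}(a), each such bundle lies in $\thick\{E, \OO(\bigcup_{\ell=1}^{n_2}\mathcal{L}_{\ell}(E,g)), \OO(\bigcup_{\ell=1}^{n_1}\mathcal{L}_{i+1-\ell}(E,g))\}$, and since both index families range within $\{1,\dots,i\}$, this thick subcategory is contained in $\thick\{E, \OO(\mathcal{L}_g(E))\}$ by the definition (\ref{the union of roofs}) of $\mathcal{L}_g(E)$ as the union of all left roofs.

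The remaining task is to show that the line bundles in $\Dom_{\triangle,L}(E)$ also lie in $\thick\{E, \OO(\mathcal{L}_g(E))\}$. Here I would extract these line bundles directly from the proof of Lemma \ref{ext-bundle thick}(a): the commutative diagram (\ref{ext-bundle diag 1}) and its dual already exhibit $\OO(\w+k_i\x_4)$ and related line bundles as sitting in the thick subcategory generated by $E$ and a single left roof, because the extension sequences $\eta$ defining the $2$-extension bundles realize $\OO(\w)$ as a two-step extension built from $E$ and the line-bundle terms $\OO\langle (i-1)\x_4\rangle$, $\OO((i-1)\x_4)$, all of which are roof objects. More systematically, each left roof $\mathcal{L}_m(E,g)$ is by construction the interval $[l_m+g, l_m+(p+2)\x_4+g]$, and the line bundles indexed by $I_1=[i+k_i-p+1, k_i+p]$ in $\Dom_{\triangle,L}(E)$ fall inside $\OO(\mathcal{L}_g(E))$; so I would verify that the index set $\bigcup_{n_1\in I_1}\langle -1,n_1\rangle$ of the bottom line bundles is literally covered by $\bigcup_{m=1}^{i}\mathcal{L}_m(E,g)$, making those line bundles themselves members of $\add\OO(\mathcal{L}_g(E))$ rather than merely generated by it.

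The main obstacle I anticipate is the bookkeeping needed to match up the two combinatorial parametrizations: the description of $\Dom^{+}_{\triangle}(E)$ in (\ref{Dom 3})--(\ref{Dom 4}) is indexed by pairs $(n_1,n_2)$ and by the integer range $I_1$, whereas the roofs $\mathcal{L}_m(E,g)$ are indexed by $m\in\{1,\dots,i\}$ with explicit base points $l_m=(k_i+m-p-2)\x_4$. I will need to check carefully that the offsets $(p+2)\x_4$ in each roof interval, together with the shift by $\w$ appearing in the dual diagram, produce exactly the degrees $\w+n_1\x_4$ with $n_1\in I_1$ claimed for the bottom line bundles, and that no line bundle in $\Dom_{\triangle,L}(E)$ escapes the union of roofs. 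Once this degree-matching is confirmed, the inclusion is immediate since thick subcategories are closed under the operations used, and combining the rank-four case with the line-bundle case yields $\Dom^{+}_{\triangle}(E)\subset \thick\{E,\OO(\mathcal{L}_g(E))\}$, completing part (a); part (b) then follows by applying vector bundle duality exactly as in the proof of Lemma \ref{ext-bundle thick}.
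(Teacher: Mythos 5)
Your treatment of the rank-four objects is exactly the paper's: apply Corollary \ref{ext-generated}(a) and observe that all the roofs invoked there lie in $\mathcal{L}_g(E)$. The gap is in the line-bundle part. First, a smaller point: the line bundles belonging to $\Dom^{+}_{\triangle}(E)$ are only those $\langle -1,j\rangle$ with $k_i\le j\le k_i+i+1$ (the terms of (\ref{Dom 3}) with $n_1+n_2=i+1$), not the whole of $\Dom_{\triangle,L}(E)$; by aiming at the larger set you make the task harder than necessary (and for $n_1$ outside $[k_i,k_i+i+1]$ the desired containment is not even expected to hold). The serious problem is your ``more systematic'' claim that these line bundles are \emph{literally} contained in $\add\OO(\mathcal{L}_g(E))$. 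This is false. Taking $k_i=0$ and $g=0$, one has $\mathcal{L}(E)=[-(p+1)\x_4,\,i\x_4]$, whose line bundles are exactly the $\OO(a\x_4)$ with $a\in[-p-1,i]$ and the $\OO(-\x_l+b\x_4)=\OO(\w+(\x_m-\x_n)+(b+1)\x_4)$ with $b\in[-1,i]$. So the roof captures the components $\OO(\w+j\x_4+f)$ of $\langle -1,j\rangle$ only for $f\neq g$; the component $\OO(\w+j\x_4+g)$ in the \emph{same} $G$-slot as the roof never lies in the interval (one checks $\w+j\x_4\not\ge -(p+1)\x_4$ and $\w+j\x_4\not\le i\x_4$ for $0\le j\le i+1$). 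Your anticipated verification that ``no line bundle escapes the union of roofs'' would therefore fail.

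These escaping line bundles $\OO(\w+j\x_4+g)$, $0\le j-k_i\le i+1$, are precisely where the real work lies, and they cannot be obtained by membership in $\add\OO(\mathcal{L}_g(E))$: they must be \emph{generated} homologically. The paper does this by taking the four-term defining sequence (\ref{2-extension bundle exact}) of the $2$-Auslander bundle $\langle 0,j\rangle$ (whose middle terms $\OO(j\x_4-\x_k)$, $\OO(j\x_4)$ lie in the roof and whose rank-four term lies in $\Dom^{+}_{\triangle}(E)$, hence is already in the thick subcategory) to extract $\OO(\w+j\x_4)$ for $0\le j\le i$, and the dual sequence (\ref{2-coextension bundle exact}) to extract $\OO(\w+(i+1)\x_4)$. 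Your first, hedged suggestion --- reusing diagram (\ref{ext-bundle diag 1}) and its dual --- points in this direction but as stated only yields the two extreme cases $\OO(\w+k_i\x_4)$ and $\OO(\w+(i+k_i+1)\x_4)$, not the intermediate degrees; and you then explicitly discard this route in favour of the literal-containment argument, which does not work. So the proposal as written does not establish part (a).
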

		\begin{proof} We only show (a), as (b) can be shown dually. Without loss of generality, we can assume  $k_i=0$ and $g=0$. In this case,  we write $E=\langle i,0 \rangle$.   
			
			(a) Thanks to Corollary \ref{ext-generated}(a), it suffices to show that 
			\begin{align*} \label{claim_reback lemma}
				\langle -1,j \rangle \subset \thick\{E, \OO(\mathcal{L}^{}(E))\} \ \text{ for } 0\le j \le i+1.
			\end{align*}			
			By definition of $\mathcal{L}^{}(E)$, we have $\mathcal{L}^{}(E)=[-(p+1)\x_4, i\x_4]$.  Since the interval
			$$[-(p+1)\x_4 +\x_k, i\x_4 -\x_k] \subset \mathcal{L}^{}(E) \ \text{ for } 1\le k \le 3,$$
		  we have $\w+j\x_4+f\in \mathcal{L}^{}(E)$  for any $0\le j \le i+1$ and $0 \neq f\in G$. It remains to show that  $\OO(\w+j\x_4) \in \thick\{E, \OO(\mathcal{L}^{}(E))\}$ holds for $0\le j \le i+1$. 
		  		  
		  Applying degree shift by $j\x_4$ to the defining sequence (\ref{2-extension bundle exact})  for the $2$-Auslander bundle $\langle 0,0 \rangle$, we obtain an exact sequence $\eta(j\x_4)$ for $1\le j \le i$ as follows: 
		  		  $$ \eta(j\x_4): \  0\to \OO(\w+j\x_4) \to \langle 0 ,j \rangle \to \bigoplus_{1\le k \le 4} \OO(j\x_4-\x_k) \to \OO(j\x_4) \to 0.$$ 
		  	  Since all terms of $\eta(j\x_4)$ except $\OO(\w+j\x_4)$ belong to $\thick\{E, \OO(\mathcal{L}^{}(E))\}$,  we have $\OO(\w+j\x_4)\in \thick\{E, \OO(\mathcal{L}^{}(E))\}$ for any $0\le j \le i$. 
		  	  
		  	  On the other hand,  applying degree shift by $\w+\x_4$ to the defining sequence (\ref{2-coextension bundle exact})  for the $2$-coAuslander bundle $F\langle 0 \rangle$, we obtain an exact sequence 
		  	  $$
		  	  \mu: \	0\to \OO(-(p+1)\x_4) \to  \bigoplus_{1\le k \le 4} \OO(-(p+1)\x_4+\x_k) \to \langle 0 ,0 \rangle \to \OO(\w+\x_4) \to 0. $$
		  	  Since all terms of $\mu(i\x_4)$ except $\OO(\w+(i+1)\x_4)$ belong to $\thick\{E, \OO(\mathcal{L}^{}(E))\}$,  we have $\OO(\w+(i+1)\x_4)\in \thick\{E, \OO(\mathcal{L}^{}(E))\}$. Thus the assertion follows.	
		\end{proof}

		\begin{example} \label{example 2} 
			We continue to discuss Example \ref{example_1}. Let $E:=\langle 1,0 \rangle$. By definition the left roof $\mathcal{L}^{}(E)=[-\c-\x_4,\x_4]$ and the right roof $\mathcal{R}^{}(E)=[\w-2\x_4,\w+\c]$. They can be illustrated by the following Auslander-Reiten quiver $\mathfrak A{(\ACM \X)}$, where  the  line bundles over $\mathcal{L}^{}(E)$ (resp. $\mathcal{R}^{}(E)$) are colored in blue (resp. red) as follows:
			\begin{figure}[H]		   
		 \resizebox{\textwidth}{!}{
			\begin{xy} 0;<14pt,0pt>:<0pt,17pt>::
				(12,8) *+{\bullet} ="41",
				(12,4) *+{\cdot} ="22",
				(14,6) *+{\langle 2,-4 \rangle} ="32",
				(16,8) *+{ \scalebox{1}{$(-\c-\x_4)$}} ="42",
				(12,0) *+{\bullet} ="03",
				(14,2) *+{\langle 0,-3 \rangle} ="13",
				(16,4) *+{\langle 1,-3 \rangle} ="23",
				(18,6) *+{ \scalebox{1}{$\langle 2,-3 \rangle$}} ="33",
				(20,8) *+{ \scalebox{1}{$(-\c)$}} ="43",
				(16,0) *+{\scalebox{1}{$(\w-2\x_4)$}} ="04",
				(18,2) *+{  \scalebox{1}{$\langle 0,-2 \rangle$}} ="14",
				(20,4) *+{\langle 1,-2 \rangle} ="24",
				(22,6) *+{\langle 2,-2 \rangle} ="34",
				(24,8) *+{\scalebox{1}{$(-3\x_4)$}} ="44",
				(20,0) *+{\scalebox{1}{$(\w-\x_4)$}} ="05",
				(22,2) *+{\langle 0,-1 \rangle} ="15",
				(24,4) *+{\langle 1,-1 \rangle} ="25",
				(26,6) *+{ \scalebox{1}{$\langle 2,-1 \rangle$}} ="35",
				(28,8) *+{ \scalebox{1}{$(-2\x_4)$}} ="45",
				(28,8) *+{ } ="400",
				(24,0) *+{ \scalebox{1}{$(\w)$}} ="06",
				(26,2) *{ \scalebox{1}{$\langle 0,0 \rangle$}} ="16",
				(28,4) *+ [Fo]{{\begin{smallmatrix} \scalebox{1}{$\langle 1,0 \rangle$}\end{smallmatrix}}}="26",
				(30,6) *+{ \scalebox{1}{$\langle 2,0 \rangle$}} ="36",
				(32,8) *+{ \scalebox{1}{$(-\x_4)$}} ="46",
				(28,0) *+{ \scalebox{1}{$(\w+\x_4)$}} ="07",
				(28,0) *+{ } ="300",
				(30,2) *+{ \scalebox{1}{$\langle 0,1 \rangle$}} ="17",
				(32,4) *+{\langle 1,1 \rangle} ="27",
				(34,6) *+{\langle 2,1 \rangle} ="37",
				(36,8) *+{ \scalebox{1}{$(0)$}} ="47",
				(32,0) *+{ \scalebox{1}{$(\w+2\x_4)$}} ="08",
				(34,2) *+{\langle 0,2 \rangle} ="18",
				(36,4) *+{\langle 1,2 \rangle} ="28",
				(38,6) *+{ \scalebox{1}{$\langle 2,2 \rangle$}} ="38",
				(40,8) *+{ \scalebox{1}{$(\x_4)$}} ="48",
				(36,0) *+{ \scalebox{1}{$(\w+3\x_4)$}} ="09",
				(38,2) *+{ \scalebox{1}{$\langle 0,3 \rangle$}} ="19",
				(40,4) *+{\langle 1,3 \rangle} ="29",
				(42,6) *+{\langle 2,3 \rangle} ="39",
				(44,8) *+{\bullet} ="49",
				(40,0) *+{ \scalebox{1}{$(\w+\c)$}} ="010",
				(42,2) *+{\langle 0,4 \rangle} ="110",
				(44,4) *+{\cdot} ="210",
				(44,0) *+{\bullet} ="011",
				"41", {\ar"32"},
				"22", {\ar"32"},
				"32", {\ar"23"},
				"32", {\ar"42"},
				"22", {\ar"13"},
				"42", {\ar"33"},
				"03", {\ar"13"},
				"13", {\ar"23"},
				"33", {\ar"43"},
				"13", {\ar"04"},
				"23", {\ar"14"},
				"23", {\ar"33"},
				"33", {\ar"24"},
				"43", {\ar"34"},
				"04", {\ar"14"},
				"14", {\ar"24"},
				"34", {\ar"44"},
				"14", {\ar"05"},
				"24", {\ar"15"},
				"24", {\ar"34"},
				"34", {\ar"25"},
				"44", {\ar"35"},
				"05", {\ar"15"},
				"15", {\ar"25"},
				"35", {\ar"45"},
				"35", {\ar"26"},
				"15", {\ar"06"},
				"25", {\ar"16"},
				"25", {\ar"35"},
				"45", {\ar"36"},
				"06", {\ar"16"},
				"16", {\ar"26"},
				"36", {\ar"46"},
				"36", {\ar"27"},
				"16", {\ar"07"},
				"26", {\ar"17"},
				"26", {\ar"36"},
				"46", {\ar"37"},
				"07", {\ar"17"},
				"17", {\ar"27"},
				"37", {\ar"47"},
				"37", {\ar"28"},
				"27", {\ar"37"},
				"17", {\ar"08"},
				"27", {\ar"18"},
				"47", {\ar"38"},
				"08", {\ar"18"},
				"18", {\ar"28"},
				"28", {\ar"38"},
				"38", {\ar"48"},
				"38", {\ar"29"},
				"18", {\ar"09"},
				"28", {\ar"19"},
				"48", {\ar"39"},
				"09", {\ar"19"},
				"19", {\ar"29"},
				"39", {\ar"49"},
				"39", {\ar"210"},
				"19", {\ar"010"},
				"29", {\ar"110"},
				"29", {\ar"39"},
				"010", {\ar"110"},
				"110", {\ar"210"},
				"110", {\ar"011"},
				%
				%
				%
				%
				%
					(16,8.8) *+{\blue \scalebox{1}{$-\c-\x_4$}},(15.4,9.6) *+{\scalebox{1}{$-\x_2-\x_3-\x_4$}},(15.4,10.4) *+{\scalebox{1}{$-\x_1-\x_3-\x_4$}},(15.4,11.2) *+{ \scalebox{1}{$-\x_1-\x_2-\x_4$}},
				(20,8.8) *+{\blue\scalebox{1}{$-\c$}},(20,9.6) *+{\scalebox{1}{$-\x_2-\x_3$}},(20,10.4) *+{\scalebox{1}{$-\x_1-\x_3$}},(20,11.2) *+{\scalebox{1}{$-\x_1-\x_2$}},
				(24,8.8) *+{\blue\scalebox{1}{$-3\x_4$}},(24,9.6) *+{\red\scalebox{1}{$\bar{x}_{23}-3\x_4$}},(24,10.4) *+{ \red\scalebox{1}{$\bar{x}_{13}-3\x_4$}},(24,11.2) *+{\color{red}\scalebox{1}{$\bar{x}_{12}-3\x_4$}},
				(28,8.8) *+{\blue\scalebox{1}{$-2\x_4$}},(28,9.6) *+{\red\scalebox{1}{$\bar{x}_{23}-2\x_4$}},(28,10.4) *+{\red\scalebox{1}{$\bar{x}_{13}-2\x_4$}},(28,11.2) *+{\color{red}\scalebox{1}{$\bar{x}_{12}-2\x_4$}},
				(32,8.8) *+{\blue\scalebox{1}{$-\x_4$}},(32,9.6) *+{\red\scalebox{1}{$\bar{x}_{23}-\x_4$}},(32,10.4) *+{\red\scalebox{1}{$\bar{x}_{13}-\x_4$}},(32,11.2) *+{\color{red}\scalebox{1}{$\bar{x}_{12}-\x_4$}},
				(36,8.8) *+{\blue\scalebox{1}{$0$}},(36,9.6) *+{\scalebox{1}{$\bar{x}_{23}$}},(36,10.4) *+{\scalebox{1}{$\bar{x}_{13}$}},(36,11.2) *+{\scalebox{1}{$\bar{x}_{12}$}},
				(40,8.8) *+{\blue\scalebox{1}{$\x_4$}},(40,9.6) *+{\scalebox{1}{$\bar{x}_{23}+\x_4$}},(40,10.4) *+{\scalebox{1}{$\bar{x}_{13}+\x_4$}},(40,11.2) *+{\scalebox{1}{$\bar{x}_{12}+\x_4$}},			
				(16,-0.8) *+{\scalebox{1}{$-\x_3-3\x_4$}},(16,-1.6) *+{\scalebox{1}{$-\x_2-3\x_4$}},(16,-2.4) *+{\scalebox{1}{$-\x_1-3\x_4$}},(16,-3.2) *+{\red \scalebox{1}{$\w-2\x_4$}},
				(20,-0.8) *+{\scalebox{1}{$-\x_3-2\x_4$}},(20,-1.6) *+{\scalebox{1}{$-\x_2-2\x_4$}},(20,-2.4) *+{\scalebox{1}{$-\x_1-2\x_4$}},(20,-3.2) *+{\red \scalebox{1}{$\w-\x_4$}},
				(24,-0.8) *+{\blue\scalebox{1}{$-\x_3-\x_4$}},(24,-1.6) *+{\blue\scalebox{1}{$-\x_2-\x_4$}},(24,-2.4) *+{\blue\scalebox{1}{$-\x_1-\x_4$}},
				(24,-3.2) *+{\color{red} \scalebox{1}{$\w$}},
				(28,-0.8) *+{\blue\scalebox{1}{$-\x_3$}},(28,-1.6) *+{\blue\scalebox{1}{$-\x_2$}},(28,-2.4) *+{\blue\scalebox{1}{$-\x_1$}},(28,-3.2) *+{\color{red} \scalebox{1}{$\w+\x_4$}},
				(32,-0.8) *+{\blue \scalebox{1}{$-\x_3+\x_4$}},(32,-1.6)*+{\blue\scalebox{1}{$-\x_2+\x_4$}},(32,-2.4) *+{\blue\scalebox{1}{$-\x_1+\x_4$}},(32,-3.2) *+{\color{red} \scalebox{1}{$\w+2\x_4$}},
				(36,-0.8) *+{\scalebox{1}{$-\x_3+2\x_4$}},(36,-1.6) *+{\scalebox{1}{$-\x_2+2\x_4$}},(36,-2.4) *+{\scalebox{1}{$-\x_1+2\x_4$}},(36,-3.2) *+{\red \scalebox{1}{$\w+3\x_4$}},
				(40,-0.8) *+{\scalebox{1}{$-\x_3+3\x_4$}},(40,-1.6) *+{\scalebox{1}{$-\x_2+3\x_4$}},(40,-2.4) *+{\scalebox{1}{$-\x_1+3\x_4$}},(40,-3.2) *+{\red \scalebox{1}{$\w+\c$}},
			\end{xy}			
		} \caption{The roofs $\mathcal{L}^{}(E)$ and $\mathcal{R}^{}(E)$ in ${\mathfrak A}(\ACM \X)$}\label{the roofs in Dom}
		\end{figure}
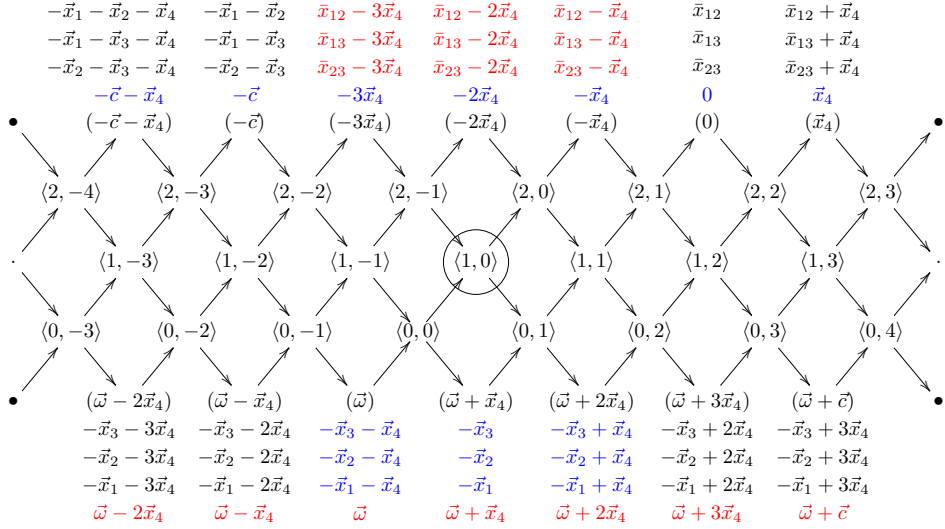

		By Proposition \ref{reback lemma}, we have $\Dom^{+}_{\triangle}(E)\subset \thick\{E, \OO(\mathcal{L}(E))\}$ and $\Dom^{+}_{\bigtriangledown}(E)\subset \thick\{E, \OO(\mathcal{R}(E))\}$. 
		Let $g:=\x_1-\x_2$ and $h=0$. Moreover, the object 
		\begin{equation} \label{Texample 2}
			T:= E \oplus \big(\bigoplus_{{\x \in \mathcal{L}_{g}(E)\cup R_{h}(E)}} \OO(\x)\big)
		\end{equation}
		 is a tilting object on $\X$, see Theorem \ref{main theorem}.		
		\end{example}

			\section{ACM tilting bundles on $\X$}\label{sec:ACM tilting bundles on}
			In this section, we investigate  ACM tilting bundles on $\X$. We first prove that a tilting bundle consisting of line bundles is the $2$-canonical tilting bundle up to degree shift.  We also provide a program to construct a family of ACM tilting bundles, and 
			show that such ACM tilting bundles are characterized by a certain condition among all ACM tilting bundles.
			
			\subsection{Tilting bundles consisting of line bundles} 
			Recall from \cite{HIMO} that  \[T^{\ca}:=\bigoplus\limits_{0\leq \x\leq 2\c}\OO(\x)\] is a tilting bundle on $\X$, called \emph{$2$-canonical tilting bundle},  whose endomorphism algebra is a $2$-canonical algebra of the same type as $\X$.
			
			\begin{theorem}\label{ACM tilting bundles consisting of line bundles}
				Let $T$ be a tilting object on $\X$ consisting of line bundles.
				Then T is isomorphic to $T^{\ca}$ up to degree shift, that is,
				\[T=\bigoplus\limits_{0\leq\x\leq2\c}L(\x), ~\text{\ for\ some\ line bundle\ } L.\]
			\end{theorem}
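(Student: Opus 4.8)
The plan is to translate the two defining properties of a tilting object into combinatorial constraints on the set of twists, and then to show that rigidity together with generation forces this set to be a full translate of $[0,2\c]$. Write $T=\bigoplus_{\x\in S}\OO(\x)$ with $S\subseteq\L$ a finite set of distinct twists. Since $\coh\X$ has global dimension $2$ (Theorem \ref{basic properties}(a)) and $\Ext^1(\OO(\x),\OO(\y))=0$ for all $\x,\y$ by \eqref{extension spaces between line bundles}, rigidity of $T$ reduces to the vanishing $\Ext^2(\OO(\x),\OO(\y))=0$ for all $\x,\y\in S$. By \eqref{extension spaces between line bundles} this is $R_{\x-\y+\w}=0$, and by the dichotomy \eqref{two possibilities of x} it means $\x-\y+\w\le\w+2\c$, i.e. $\x-\y\le 2\c$. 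Running over ordered pairs, rigidity becomes the single symmetric condition
\[ S-S\subseteq[-2\c,2\c]. \]
On the other hand $T$ is a tilting object, so $\End(T)$ is finite dimensional with $\DDD^{\bo}(\mod\End T)\simeq\DDD^{\bo}(\coh\X)$; hence the number of indecomposable summands of $T$ equals $\rk K_0(\coh\X)=5p+7$ and the classes $[\OO(\x)]$, $\x\in S$, form a $\Z$-basis of $K_0(\coh\X)$. In particular $\abs{S}=5p+7$.

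Next I would normalize and fix the target. Twisting by $\OO(-\a)$ for a summand $\OO(\a)$ of $T$ is an autoequivalence of $\DDD^{\bo}(\coh\X)$, so we may assume $\OO\in\add T$; testing rigidity against this summand gives $S\subseteq[-2\c,2\c]$, confining $S$ to an explicit finite set. The goal is then to produce an element $\x_0\in\L$ with $S\subseteq[\x_0,\x_0+2\c]$. Indeed, a direct count shows that $[0,2\c]$ consists exactly of those $\x=\sum_i\lambda_i\x_i+\lambda\c$ in normal form \eqref{equ:nor} with $0\le\lambda\le 2-(\lambda_1+\lambda_2+\lambda_3+[\lambda_4\ge1])$, and these number $5p+7$. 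Consequently, once $S\subseteq[\x_0,\x_0+2\c]$ is established, the equality $\abs{S}=5p+7=\abs{[\x_0,\x_0+2\c]}$ forces $S=[\x_0,\x_0+2\c]$, that is $T=\bigoplus_{0\le\x\le2\c}\OO(\x_0+\x)$ with $L:=\OO(\x_0)$, as claimed.

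The heart of the argument, and the step I expect to be the main obstacle, is to manufacture such an $\x_0$, i.e. to upgrade the diameter bound $S-S\subseteq[-2\c,2\c]$ to containment in an honest order interval of width $2\c$. The naive idea of translating $S$ by its infimum fails, because $(\L,\le)$ is not a lattice: already $\{\x_1,\x_2\}$ has the incomparable maximal lower bounds $0$ and $\x_1-\x_2$. Instead I would exploit the degree homomorphism $\deg\colon\L\to\Z$, which is monotone (it sends $\L_+$ into $\Z_{\ge0}$) and whose kernel is the torsion subgroup of $\L$. Monotonicity turns each relation $\x-\y\le2\c$ into $\deg\x-\deg\y\le\deg(2\c)$, so $\deg(S)$ lies in an integer interval of length $\deg(2\c)$; the remaining freedom is the finite torsion of $\L$ (containing the subgroup $G$ of Setting \ref{setting 1}), to be controlled coset by coset using the pairwise relations $\x-\y\le2\c$. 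Finally I would feed in generation: the requirement that the $[\OO(\x)]$, $\x\in S$, form a $\Z$-basis of $K_0(\coh\X)$ — exploited through the exact sequences relating line bundles such as those underlying \eqref{2-extension bundle exact} and \eqref{2-coextension bundle exact} — rules out both gaps inside the degree window and overshoots beyond width $2\c$, since either configuration would make the classes linearly dependent.

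In summary, the reduction in the first two paragraphs is purely formal, and essentially all the work is concentrated in this last torsion bookkeeping: promoting the degree-level interval bound to the order interval $[\x_0,\x_0+2\c]$, and using generation to forbid gaps and overshoots. I expect the careful case analysis of the torsion cosets — the only feature genuinely distinguishing $(2,2,2,p)$ from the totally ordered $\mathbb{P}^2$-type situation — to be the technically delicate point, after which the cardinality identity $\abs{[0,2\c]}=5p+7$ closes the proof.
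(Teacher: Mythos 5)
Your first two paragraphs are correct and coincide with the paper's starting point (Lemma \ref{rigid domain among line bundles}): rigidity of $T=\bigoplus_{\x\in S}\OO(\x)$ is exactly the condition $\x-\y\le2\c$ for all ordered pairs in $S$, a tilting object has $\abs{S}=5p+7$ summands, and $\abs{[0,2\c]}=5p+7$, so everything reduces to producing $\x_0$ with $S\subseteq[\x_0,\x_0+2\c]$. The problem is that this last step is the entire content of the theorem, and you do not prove it: you offer a strategy (push forward along $\deg$, then do ``torsion bookkeeping'' coset by coset, then invoke $K_0$-independence to forbid gaps and overshoots) and explicitly flag it as the main obstacle without executing it. Worse, the mechanism you propose for the final exclusion --- that a gap or overshoot ``would make the classes linearly dependent'' in $K_0(\coh\X)$ --- is itself an unproved and far-from-obvious assertion; the classes $[\OO(\x)]$, $\x\in\L$, satisfy many relations, and deciding which $(5p+7)$-element subsets are bases is not easier than the original problem. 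In fact the paper never uses generation beyond the count $\abs{S}=5p+7$: rigidity plus cardinality already force the conclusion.

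The concrete idea you are missing is a pigeonhole on $\w$-orbits rather than on degree. By Corollary \ref{rigid domain among line bundles cor}(a), $L\oplus L(\ell\w)$ is rigid only for $\ell\in\{-2,0,2\}$, so each $\w$-orbit of line bundles meets $\add T$ at most twice (three members would need all pairwise differences in $\{-2,0,2\}$). Since $\abs{\L/\Z\w}=4p+8<5p+7$, at least $p-1$ orbits are met exactly twice, each contributing a pair $L_i\oplus L_i(-2\w)$; rigidity between these pairs, again via Lemma \ref{rigid domain among line bundles}, forces the $L_i$ into a single chain $L_{i+1}=L_i(\x_4)$ with $L_{p-1}(-2\w)=L_1(2\c)$. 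Thus both $L_1$ and $L_1(2\c)$ are summands, and testing any further summand against these two confines $S$ to the interval $[0,2\c]$ after normalizing $L_1=\OO$; your cardinality identity then finishes the proof. This orbit count is the precise replacement for the ``torsion bookkeeping'' you left open, and it shows the argument closes without any appeal to linear independence in $K_0(\coh\X)$.
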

			
			To prove Theorem \ref{ACM tilting bundles consisting of line bundles}, we need the following observations.
			
			\begin{lemma}\label{rigid domain among line bundles}
				For any line bundle $L$ and $\x, \y \in \L$, 
				\[ L(\x) \oplus L(\y) \text{\ is\ rigid} \text{\ if and only if} -2\c\leq \y- \x\leq 2\c. \]
			\end{lemma}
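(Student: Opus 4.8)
The plan is to translate the rigidity of $L(\x)\oplus L(\y)$ into the vanishing of finitely many extension groups and then read off the inequality directly from the graded structure of $R$. Since $L(\x)$ and $L(\y)$ are line bundles, hence ACM bundles, and $\coh\X$ has global dimension $2$ by Theorem \ref{basic properties}(a), rigidity is equivalent to the vanishing of $\Ext^1$ and $\Ext^2$ between the four ordered pairs of summands. The four $\Ext^1$ groups vanish for free: by the description (\ref{ACM bundle eq.}) of $\ACM\X$, every line bundle $\OO(\z)$ satisfies $\Ext^1(\OO(\z'),\OO(\z))=0$ for all $\z,\z'\in\L$. Thus I would reduce the problem to the vanishing of the four $\Ext^2$ groups.

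Next I would dispose of the diagonal (self-extension) terms. Writing $L=\OO(\vec a)$, formula (\ref{extension spaces between line bundles}) gives $\Ext^2(L(\x),L(\x))\simeq D(R_{\w})$, and computing the normal form $\w=\x_1+\x_2+\x_3+(p-1)\x_4-3\c$ shows that its $\c$-coefficient is negative, so $R_{\w}=0$ and this self-extension vanishes unconditionally; the same holds for $L(\y)$. Hence each summand is exceptional and rigidity of the direct sum is governed entirely by the two cross terms.

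For these I would again invoke (\ref{extension spaces between line bundles}) to obtain
\[
\Ext^2(L(\x),L(\y))\simeq D(R_{\x-\y+\w}),\qquad \Ext^2(L(\y),L(\x))\simeq D(R_{\y-\x+\w}).
\]
Using the fact recorded after (\ref{equ:nor}) that $R_{\z}\neq 0$ if and only if $\z\geq 0$, the vanishing of these two groups is precisely the pair of conditions $\x-\y+\w\not\geq 0$ and $\y-\x+\w\not\geq 0$.

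The decisive step is the order dichotomy (\ref{two possibilities of x}): every element $\z\in\L$ satisfies exactly one of $\z\geq 0$ or $\z\leq\w+2\c$, so $\z\not\geq 0$ is equivalent to $\z\leq\w+2\c$. Applying this to $\z=\y-\x+\w$ gives $\y-\x+\w\leq\w+2\c$, that is $\y-\x\leq 2\c$, and applying it to $\z=\x-\y+\w$ gives $\x-\y\leq 2\c$, that is $-2\c\leq\y-\x$; together these yield the claimed double inequality $-2\c\leq\y-\x\leq 2\c$, and each implication is reversible. I expect the only slightly delicate point to be the normal-form bookkeeping certifying $R_{\w}=0$ (hence the exceptionality of each summand); after that the argument is a direct application of the extension formula together with the order dichotomy.
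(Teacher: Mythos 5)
Your proposal is correct and follows essentially the same route as the paper: reduce rigidity to the vanishing of the two cross-term $\Ext^2$ groups via formula (\ref{extension spaces between line bundles}), then convert $R_{\x-\y+\w}=0$ and $R_{\y-\x+\w}=0$ into the double inequality using the dichotomy (\ref{two possibilities of x}). The only difference is that you spell out the automatic vanishing of the $\Ext^1$ and self-extension terms (including the normal-form check that $R_{\w}=0$), which the paper's proof absorbs into a one-line reduction.
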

			
			\begin{proof}
				
				By Theorem \ref{basic properties}(c),  it suffices to show that $\Ext_{}^2(L(\x),L(\y))=0$ is equivalent to $-2\c\leq \y-\x$, and $\Ext_{}^2(L(\y),L(\x))=0$ is equivalent to $\y- \x\leq 2\c$. We only prove the former one, as the latter  can be proved similarly.
			
				Using Auslander-Reiten-Serre duality, we have $\Ext_{}^2(L(\x),L(\y))=D(R_{\x-\y+\w})$. By (\ref{two possibilities of x}), this is zero if and only if $\x-\y+\w \leq \vec{\omega}+2\c$, that is, $-2\c\leq \y- \x$.
			\end{proof}
			
			\begin{corollary} \label{rigid domain among line bundles cor} 
				For a line bundle $L$ and an integer $\ell$, the following assertions hold.
				\begin{itemize}
					\item[(a)]  $L \oplus L(\ell\w)$ is rigid if and only if $\ell\in \{-2, 0, 2 \}$.
					\item[(b)]  $L \oplus L(\pm\w+ \ell\x_4)$ is rigid if and only if $1 \le\pm\,\ell \le p+1$. 
					\item[(c)]  For any $i,j$ with $1\le i < j \le 3$, $L \oplus L(\x_i-\x_j +\ell\x_4)$ is rigid if and
					only if $-p \le\ell \le p$. 
					
				\end{itemize}		
			\end{corollary}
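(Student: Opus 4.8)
The plan is to deduce all three statements directly from Lemma~\ref{rigid domain among line bundles}, which already reduces rigidity of $L \oplus L(\z)$ to the membership $-2\c \le \z \le 2\c$ of the degree shift $\z$ in the partially ordered group $\L$ (here $\z = \ell\w$, $\z = \pm\w + \ell\x_4$, and $\z = \x_i - \x_j + \ell\x_4$ in cases (a), (b), (c) respectively). Writing this out, rigidity is equivalent to the two conditions $\z + 2\c \ge 0$ and $2\c - \z \ge 0$. Recalling that an element of $\L$ is $\ge 0$ precisely when the coefficient $\lambda$ of $\c$ in its normal form (\ref{equ:nor}) is nonnegative (equivalently, the relevant homogeneous component of $R$ is nonzero), the whole corollary becomes a matter of computing two normal forms for each case and reading off the range of $\ell$ from (\ref{two possibilities of x}).

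First I would record the normal form of the dualizing element, $\w = \x_1 + \x_2 + \x_3 + (p-1)\x_4 - 3\c$, obtained from $\w = \c - \sum_i \x_i$ using $-\x_i = \x_i - \c$ and $-\x_4 = (p-1)\x_4 - \c$. Then for each case I substitute $\z$ into $\z + 2\c$ and $2\c - \z$, reduce modulo the defining relations $2\x_1 = 2\x_2 = 2\x_3 = p\x_4 = \c$, and extract the $\c$-coefficient as a function of $\ell$: the $\x_1,\x_2,\x_3$ parts contribute only through their parity and the $\x_4$ part through division of the relevant integer by $p$, so the $\c$-coefficient is an affine function of a single floor $\lfloor \cdot/p\rfloor$, and solving the two inequalities yields the asserted ranges. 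For instance, in (b) one computes $2\c - \w = \x_1+\x_2+\x_3+\x_4+\c$, whence $2\c - \z = \x_1+\x_2+\x_3 + (1-\ell)\x_4 + \c$ has reduced $\c$-coefficient $1 + \lfloor (1-\ell)/p\rfloor \ge 0$ exactly when $\ell \le p+1$, while $\z + 2\c = \x_1+\x_2+\x_3+(p-1+\ell)\x_4 - \c$ forces $\ell \ge 1$.

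Finally I would invoke symmetry to halve the work: interchanging $\x$ and $\y$ sends $\z$ to $-\z$, so the $-\w$ subcase of (b) follows from the $+\w$ computation (giving $1 \le -\ell \le p+1$), and in (c) the condition is manifestly symmetric under $\ell \mapsto -\ell$ together with $i \leftrightarrow j$. The main obstacle is not conceptual but the bookkeeping of the normal-form reductions; the only genuinely delicate point is case (a), where $\ell\w$ does not move monotonically in the partial order, so one must carefully track the parity of $\ell$ (which governs the $\x_1,\x_2,\x_3$ contributions) together with the reduction $\ell(p-1)\x_4 \equiv -\ell\x_4 \pmod{\c}$, and handle the boundary values of the floor function, in order to confirm that \emph{only} $\ell = -2, 0, 2$ survive among all integers rather than merely all small ones.
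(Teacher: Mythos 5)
Your proposal is correct and follows essentially the same route as the paper: both reduce each case via Lemma \ref{rigid domain among line bundles} to the membership $-2\c\le\z\le2\c$ and then settle it by computation in $\L$, with the parity split in case (a) being exactly the delicate point the paper also isolates (the paper reduces the even/odd cases to inequalities among multiples of $\x_4$, which is your floor-of-$\lambda_4$ bookkeeping in different clothing, and dismisses (b) and (c) as immediate, where your sample normal-form computations are accurate).
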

			
			\begin{proof} (a) By Lemma \ref{rigid domain among line bundles}, $L \oplus L(\ell\w)$ is rigid if and only if $-2\c\leq \ell\w\leq 2\c$. We divide the proof into the cases of $\ell$ even or odd. 
				
				Suppose $\ell=2k$ for some integer $k$. We have $-2\c\leq 2k(\c-\sum_{i=1}^{4}\x_i)\leq 2\c$, which yields $-2p\x_4\leq k(p+2)\x_4\leq 2p\x_4$, which is equivalent to  
				$-\frac{2p}{p+2} \le k \le \frac{2p}{p+2}$, that is, $-1 \le k \le 1$. Equivalently, $\ell\in \{-2, 0, 2 \}$.  
				
				Suppose $\ell=2k+1$ for some integer $k$. We have $-2\c\leq (2k+1)(\c-\sum_{i=1}^{4}\x_i)\leq 2\c$, which by a simple calculation implies that $-2\c\leq (1-k)\c-\sum_{i=1}^{3}\x_i-(2k+1)\x_4\leq 2\c$. This is equivalent to  $k\c\leq -(2k+1)\x_4\leq (k+1)\c$, yielding that $-\frac{p+1}{p+2} \le k \le -\frac{1}{p+2}$,  a contradiction.					Combining the two cases, we have the statement (a).
				
				(b)(c) Immediate from Lemma \ref{rigid domain among line bundles}.
			\end{proof}
			
			Now we are ready to prove Theorem \ref{ACM tilting bundles consisting of line bundles}.
			\begin{proof}[Proof of Theorem~\rm\ref{ACM tilting bundles consisting of line bundles}.] By Corollary \ref{rigid domain among line bundles cor}(a), we know that $\add T$ meets each $w$-orbit of line bundles at most twice. Note that $|T|=5p+7$ and $|\L/\Z\w|=4p+8$. Then there are at least $p-1$ $w$-orbits of line bundles contain exactly two elements in $\add T$. Thus we may assume that $T$ have the following direct summands
				$$L_1 \oplus L_1(-2\w), L_2 \oplus L_2(-2\w), \dots ,L_{p-1} \oplus L_{p-1}(-2\w).$$
				Then by Lemma \ref{rigid domain among line bundles}, for any $i,j \in \{1,
				\dots, p-1\}$, $L_i \oplus L_i(2\w)\oplus L_j \oplus L_j(2\w)$	is rigid if and only if $L_j=L_i(\x)$ with $-(p-2)\x_4 \le \x \le (p-2)\x_4$. Rearranging the indices of $L_1,\dots,L_{p-1}$, we may suppose that $L_{i+1}=L_{i}(\x_4)$ for any $1\le i \le p-2$, and conclude that $L_{p-1}(-2\w)=L_1(2\c)$. Since $L_1$ and $ L_1(2\c)$ belong to $\add T$, then  by Lemma \ref{rigid domain among line bundles}, the possible indecomposable direct summands of $T$ are cantained in 
				$\{L_1(\x) \mid 0\le\x\le 2\c\}$.  This implies the assertion by cardinality reason. 	
			\end{proof}

			\subsection{A class of ACM tilting bundles on $\X$} In this subsection, we study a class of ACM tilting bundles with rank-four indecomposable direct summands, and then give the trichotomy of the form for each such ACM tilting bundles.
			
			  Throughout this subsection, we let $E_i=\langle i, k_i \rangle$ and $E_j=\langle j, k_j \rangle$ be any  two rank-four indecomposable ACM bundles with $0\le i\le j\le p-2$ and $k_i,k_j \in\Z$ such that $E_i \in \Dom^{+}(E_j)$. By Lemma \ref{Domain property}, we have   
			\begin{align}\label{cap E_i,j}
				\bigcap_{n=i,j}\Dom_{L}(E_n)=\bigcup_{n_1\in I_1} \langle -1,n_1 \rangle \cup \bigcup_{n_2\in I_2} \langle p-1,n_2+1 \rangle,
			\end{align}
			where $I_1=[j+k_j-p+1,k_j+p]$ and $ I_2=[k_i-p-1,i+k_i]$.

			\begin{lemma} \label{two or three elements}
				Under the above setting, each $\w$-orbit of line bundles contains either two or three elements in $\bigcap_{n=i,j}\Dom_{L}(E_n)$.				
			\end{lemma}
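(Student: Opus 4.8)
The plan is to exploit the explicit description (\ref{cap E_i,j}) together with the identity $2\w=-(p+2)\x_4$, which forces every $\w$-orbit of line bundles to be governed by a single residue class modulo $p+2$. Recall from (\ref{cap E_i,j}) that $\bigcap_{n=i,j}\Dom_{L}(E_n)$ consists of the ``top'' line bundles $\OO(n_2\x_4+g)$ with $n_2\in I_2$ and the ``bottom'' line bundles $\OO(\w+n_1\x_4+g)$ with $n_1\in I_1$, where $g$ ranges over $G$.

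First I would analyze the $\w$-orbit structure. Since $2\w=-(p+2)\x_4$, adding $\w$ to a top bundle $\OO(m\x_4+g)$ yields the bottom bundle $\OO(\w+m\x_4+g)$, and adding $\w$ once more returns a top bundle $\OO((m-p-2)\x_4+g)$; thus the element $g\in G$ stays constant along an orbit, while the $\x_4$-coefficient changes only by multiples of $p+2$. Consequently each $\w$-orbit is indexed by a pair $(g,r)\in G\times(\Z/(p+2)\Z)$, its members being exactly the top bundles $\OO(m\x_4+g)$ and bottom bundles $\OO(\w+m\x_4+g)$ with $m\equiv r\pmod{p+2}$. A cardinality check, namely $|G|\cdot(p+2)=4p+8=|\L/\Z\w|$, confirms that these pairs exhaust all orbits.

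Next I would count, for a fixed orbit $(g,r)$, how many members of $\bigcap_{n=i,j}\Dom_{L}(E_n)$ it contains. By (\ref{cap E_i,j}) this number is $\#\{n_1\in I_1:n_1\equiv r\}+\#\{n_2\in I_2:n_2\equiv r\}$. The defining inequalities $k_j\le k_i$ and $i+k_i\le j+k_j$ of the relation $E_i\in\Dom^{+}(E_j)$ (Lemma \ref{Domain property}) give $|I_1|=2p-j$ and $|I_2|=p+2+i$, both lying in $[p+2,\,2(p+2))$. Hence each of $I_1,I_2$ meets every residue class either once or twice, so each of the two counts is $1$ or $2$, the total lies in $\{2,3,4\}$, and the lower bound $2$ is automatic.

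The only genuine obstacle is ruling out the value $4$, that is, showing that no residue is doubled in both $I_1$ and $I_2$ at once. I would identify the doubled residues explicitly: reducing modulo $p+2$, the residues occurring twice in $I_2$ form the block $\{k_i+1,\dots,k_i+i\}$, while those occurring twice in $I_1$ form $\{j+k_j+3,\dots,k_j+p\}$. Writing $k_i=k_j+A$ with $0\le A\le j-i$ (again from the two inequalities) and subtracting the common shift $k_j$, these become $\{A+1,\dots,A+i\}$ and $\{j+3,\dots,p\}$. Since $A+i\le j<j+3$ and neither block wraps around modulo $p+2$, they are disjoint, so $4$ never occurs. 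This shows that each $\w$-orbit meets $\bigcap_{n=i,j}\Dom_{L}(E_n)$ in exactly two or three elements, which is the assertion.
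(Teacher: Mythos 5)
Your proof is correct and takes essentially the same route as the paper: both reduce the statement to counting how the integer intervals $I_1$ and $I_2$ from (\ref{cap E_i,j}) meet each residue class modulo $p+2$, using $2\w=-(p+2)\x_4$ and the fact that both intervals have length in $[p+2,2(p+2))$. The only real difference is in excluding the value $4$, where the paper compares interval endpoints shifted by $2(p+2)$ while you explicitly identify the two blocks of doubled residues, $\{A+1,\dots,A+i\}$ and $\{j+3,\dots,p\}$ after normalizing by $k_j$, and observe that the hypothesis $i+k_i\le j+k_j$ forces them to be disjoint; your version makes the role of that inequality more transparent.
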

			\begin{proof} Since $E_i \in \Dom_{\triangle}^{+}(E_j)$, we have $k_j\le k_i$ and $i+k_i\le j+k_j$.
				By a simple calculation, we have
				\begin{align*}
					\{k_i+1, i+k_i\} \subset I_1 \ \text{ and }\ \{j+k_j-p+1, k_j-2 \} \subset I_2.
				\end{align*}
				This implies that each $\w$-orbit of line bundles contains at least two elements in $\bigcap_{n=i,j}\Dom_{L}(E_n)$.	
				Since $k_i-p-1+2(p+2)>k_j+p$ and $i+k_i-2(p+2)<j+k_j-p+1$, we have that each $\w$-orbit of line bundles contains at most three elements in $\bigcap_{n=i,j}\Dom_{L}(E_n)$.	Thus the assertion follows.									
			\end{proof}
			
			Let $L$ be a line bundle and $G:=\{0,\x_1-\x_2,\x_1-\x_3,\x_2-\x_3\}$. For an integer $k\ge 0$, we call the set
			\begin{align} \label{segment}
		 \bigcup_{0\le \ell\le k}\{L(g+\ell\w)\mid g\in G \}
		\end{align}
		 \emph{$(\w,k)$-segment} associated with $L$.

			\begin{lemma} \label{piece} 	
				Let $V$ be a $(\w,1)$-segment (resp. $(\w,2)$-segment). Then there are at most $4$ (resp. $5$)  line bundles of $V$ such that their direct sum is rigid.			
			\end{lemma}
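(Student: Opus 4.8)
The plan is to reduce the claim to a combinatorial counting problem on the index set $G\times\{0,1,\ldots,k\}$. Since $\Ext$ is additive, a finite direct sum of line bundles is rigid if and only if each pair of summands is rigid, so I must bound the size of a largest pairwise-rigid subfamily of $V$. Writing the members of $V$ as $L(g+\ell\w)$ with $g\in G$ and $0\le\ell\le k$, the rigidity of two members depends only on the difference $(g'-g)+(\ell'-\ell)\w$. Because $2\x_1=2\x_2=2\x_3=\c$, every nonzero element of $G$ is $2$-torsion and $G$ is closed under addition (it is a Klein four-group), whence $g'-g\in G$ and $g'-g=0$ iff $g=g'$. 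Thus every relevant difference has the form $h+\delta\w$ with $h\in G$ and $\delta:=\ell'-\ell\in\{-k,\ldots,k\}$; in particular the count is independent of $L$, and I can record rigidity as a graph on $G\times\{0,\ldots,k\}$.

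Next I would read off the compatibility rules from Lemma~\ref{rigid domain among line bundles} and Corollary~\ref{rigid domain among line bundles cor}. For $\delta=0$ the pair is always rigid. When $h=0$ the difference is $\delta\w$, so by Corollary~\ref{rigid domain among line bundles cor}(a) it is rigid exactly for $\delta\in\{-2,0,2\}$: two members with equal $G$-part are incompatible at $|\delta|=1$ but compatible at $|\delta|=2$. When $h\neq0$ I would compute the difference directly. For $|\delta|=1$ a one-line normal-form reduction such as $(\x_1-\x_2)+\w=-\x_3-\x_4$ (using $2\x_2=\c$) shows $-2\c\le h+\delta\w\le2\c$, so the pair is rigid; for $|\delta|=2$ the analogous reduction $(\x_1-\x_2)+2\w=-\x_1-\x_2-2\x_4$ gives, after adding $2\c$, the element $\x_1+\x_2+(p-2)\x_4-\c$ whose normal form has $\c$-coefficient equal to $-1$, so the condition $-2\c\le h+2\w$ fails and the pair is not rigid. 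The remaining choices of $h\neq0$ follow by the symmetry permuting $\x_1,\x_2,\x_3$. The net rule is that across adjacent levels ($|\delta|=1$) two members are rigid iff their $G$-parts differ, whereas across the extreme levels ($|\delta|=2$) they are rigid iff their $G$-parts agree.

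Finally I would solve the resulting clique problem. Let $S_\ell\subseteq G$ denote the chosen $G$-parts at level $\ell$. Adjacent-level compatibility forces $S_\ell\cap S_{\ell+1}=\emptyset$. For a $(\w,1)$-segment this is the only constraint, so $S_0$ and $S_1$ are disjoint subsets of $G$ and the total number of chosen bundles is $|S_0|+|S_1|\le|G|=4$. For a $(\w,2)$-segment the extreme-level rule adds that $g=g'$ for all $g\in S_0$ and $g'\in S_2$; hence if $S_0$ and $S_2$ are both nonempty they must equal a common singleton $\{g_0\}$, which forces $S_1\subseteq G\setminus\{g_0\}$ and a total of $1+|S_1|+1\le5$, while if one of $S_0,S_2$ is empty the two remaining levels again give at most $|G|=4$. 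In every case the bound is $5$, and the configurations $S_0=S_2=\{g_0\}$ with $|S_1|=3$ realize it. I expect the crux to be the two explicit computations at $h\neq0$, since the whole argument rests on the extreme-level compatibility being exactly opposite to the adjacent-level one; once these signs are pinned down the clique count is routine.
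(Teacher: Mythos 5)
Your proposal is correct and takes essentially the same route as the paper: decompose the segment by $\w$-level, observe that members with equal $G$-part clash at $|\delta|=1$ but are compatible at $|\delta|=2$ while members with distinct $G$-parts behave oppositely, and then count (the paper phrases this as $a_1+a_2\le 4$, $a_2+a_3\le 4$ and ``$a_1\ge 2\Rightarrow a_3=0$'', giving $1+3+1=5$). The only cosmetic difference is that you rederive the cross-level incompatibility by a direct normal-form computation where the paper cites Corollary \ref{rigid domain among line bundles cor}(c); since $(\x_1-\x_2)+2\w=\x_1-\x_2-(p+2)\x_4$, your computation is exactly that corollary in disguise.
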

			\begin{proof} In the case when $V$ is a $(\w,1)$-segment, the assertion follows immediately from Corollary \ref{rigid domain among line bundles cor}(a). Now we consider the case when $V$ is a  $(\w,2)$-segment. By definition, there exists a line bundle $L$ such that 
				$$V=\bigcup_{0\le i\le 2}V_i,\   \ \text{where} \ V_i:= \{L(g+i\w)\mid g\in G\}.$$
			  Let $M$ be a subset of $V$ such that the direct sum of all elements in $M$ is rigid. We claim that $|M|\le 5$. For $i=0,1,2$, we let $a_i=|M\cap V_i|$. 
				By Corollary \ref{rigid domain among line bundles cor}(a), we have $a_1+a_2\le 4$ and $a_2+a_3\le 4$. By Corollary \ref{rigid domain among line bundles cor}(c), we know that $a_1\ge 2$ implies $a_3=0$, and  $a_3\ge 2$ implies $a_1=0$. Thus we have 
				$$|M|=a_1+a_2+a_3\le 1+3+1=5.$$	Hence we have the assertion.	
			\end{proof}
			
			\begin{remark}\label{double piece} Combining the argument of the above proof and Lemma \ref{rigid domain among line bundles},  we have $|M|=5$ if and only if the direct sum of all elements in $U$ has the form
				\begin{align*}
					L(g) \oplus \bigoplus_{f\in G\setminus\{g\}} L(f-\w) \oplus L(g-2\w),
				\end{align*}
				where $L$ is a line bundle  and $g\in G$.
				
			\end{remark}
			
			\begin{proposition} \label{Num rigid lemma}
				There are at most $5p+i-j+6$ line bundles in $\bigcap_{n=i,j}\Dom_{L}(E_n)$ such that their  direct sum is rigid.		
			\end{proposition}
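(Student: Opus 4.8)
The plan is to sort the line bundles in $S:=\bigcap_{n=i,j}\Dom_L(E_n)$ by their $\w$-orbits and to bound, orbit by orbit, how many of them can sit inside a rigid direct sum; adding up these local bounds gives the global estimate. The only structural input needed is that rigidity is inherited by sub-direct-sums: if $\bigoplus_{\x\in M}\OO(\x)$ is rigid for some $M\subseteq S$, then so is the direct sum of those $\OO(\x)$ with $\x\in M$ lying in any fixed $\w$-orbit. Hence it suffices to bound $|M|$ inside each single orbit and sum.

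First I would describe the orbit structure. The key identity is $2\w=-(p+2)\x_4$, which follows at once from $2\x_a=\c=p\x_4$. Writing the two families in (\ref{cap E_i,j}) as \emph{bottom} positions $\OO(n_2\x_4+g)$ with $n_2\in I_2$ and \emph{top} positions $\OO(\w+n_1\x_4+g)$ with $n_1\in I_1$, this identity shows that $(\,\cdot\,)(\w)$ sends a bottom position $\OO(n\x_4)$ to the top position $\OO(\w+n\x_4)$ and then to the bottom position $\OO((n-p-2)\x_4)$. Thus each $\w$-orbit, read off in the $\w$-direction, runs through $\dots,\OO(n\x_4),\OO(\w+n\x_4),\OO((n-p-2)\x_4),\dots$, so all top- and bottom-indices occurring in a given orbit share one residue modulo $p+2$. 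Consequently the line-bundle positions fall into exactly $p+2$ distinct $\w$-orbits, indexed by the residue of the $\x_4$-index modulo $p+2$, and by Lemma \ref{two or three elements} every one of them meets $S$ in either two or three positions.

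Next I would count. On positions, $S$ consists of $|I_1|+|I_2|=(2p-j)+(i+p+2)=3p+i-j+2$ points, the top and bottom positions being distinct because $\w\notin\Z\x_4+G$. Writing $N_2$ (resp. $N_1$) for the number of orbits meeting $S$ in three (resp. two) positions, the relations $N_1+N_2=p+2$ and $2N_1+3N_2=3p+i-j+2$ force $N_2=p+i-j-2$ and $N_1=j-i+4$. Moreover each orbit-slice is consecutive in the $\w$-direction, so the two-position slices are exactly $(\w,1)$-segments and the three-position slices are $(\w,2)$-segments in the sense of (\ref{segment}); I would verify this consecutiveness directly from the descriptions of $I_1,I_2$ together with the standing inequalities $k_j\le k_i$ and $i+k_i\le j+k_j$ coming from $E_i\in\Dom^{+}(E_j)$, the only thing to rule out being a selected top index sitting a full period $p+2$ away from a selected bottom index, which these inequalities (and $j\le p-2$) exclude.

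Finally, applying Lemma \ref{piece} to each slice bounds the number of line bundles with rigid direct sum by $4$ on a $(\w,1)$-segment and by $5$ on a $(\w,2)$-segment. Summing over all orbits yields
$$|M|\le 4N_1+5N_2=4(j-i+4)+5(p+i-j-2)=5p+i-j+6,$$
as claimed. I expect the main obstacle to be the third step: pinning down the counts $N_1,N_2$ and, above all, checking that every orbit-slice is genuinely a $(\w,1)$- or $(\w,2)$-segment, since only then does Lemma \ref{piece} apply verbatim; this is precisely where the inequalities $k_j\le k_i$, $i+k_i\le j+k_j$ and the weight bound $j\le p-2$ are indispensable.
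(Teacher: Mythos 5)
Your proof is correct and follows essentially the same route as the paper's: decompose $\bigcap_{n=i,j}\Dom_{L}(E_n)$ into $p+i-j-2$ $(\w,2)$-segments and $j-i+4$ $(\w,1)$-segments using the $\w$-orbit structure together with Lemma \ref{two or three elements}, then apply Lemma \ref{piece} to each segment and sum. Your explicit check that every orbit-slice is consecutive in the $\w$-direction (so that Lemma \ref{piece} applies verbatim) is a detail the paper leaves implicit, not a different argument.
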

			\begin{proof}  
				Note that $|I_1|=2p-j$ and $|I_2|=p+i+2$. Thus we have $$2p-j-(p+2)=p-j-2 \ \ \text{and} \ \ p+i+2-(p+2)=i.$$ 
				Since $|I_1|+|I_2|-3(p+i-j-2)=2(j-i+4)$, the set $\bigcap_{n=i,j}\Dom_{L}(E_n)$ consists of $p+i-j-2$  $(\w,2)$-segments and $j-i+4$ $(\w,1)$-segments by Lemma \ref{two or three elements}. By Lemma \ref{piece}, we have $5(p+i-j-2)+4(j-i+4)=5p+i-j+6$. 
			\end{proof}

				\begin{definition}Let $I \subset J \subset \L$. We say that $I$ is a \emph{$J$-upset}  if $(I+\L_{+}) \cap J \subset I$. 	Moreover,  an $J$-upset is called \emph{non-trivial} if  it is neither $J$ nor $\emptyset$. 
					For simplicity, we say that $I$ is an \emph{upset} of $\L$ if $J=\L$.
				\end{definition}

				Now we provide a program to construct a family of ACM tilting bundles on $\X$. 
				Our strategy consists of the following two steps.
				
			\emph{Step $1$}:	Let $i,j$ be integers with $0\le i\le j\le p-2$. For each $i\le n \le j$, let $E_n:=\langle n,k_n\rangle$ with  $E_s\in \Dom^{+}(E_t)$ for any $i\le s\le t \le j$, that is, the sequence $(k_n)_{i+1\le n \le j}$ satisfies $$k_n \le k_{n-1} \le k_{n}+1.$$

				  For $g, h\in G:=\{0,\x_1-\x_2,\x_1-\x_3,\x_2-\x_3\}$ with $g\neq h$, we set
					\begin{align*}
						V_{g,h}(E_i,E_j):=   \bigoplus_{{\x \in \mathcal{L}_{g}(E_i)\cup \mathcal{R}_{h}(E_j)}} \OO(\x),
					\end{align*}
					where  $\mathcal{L}_{g}(E_i)$ and $\mathcal{R}_{h}(E_j)$ are given in (\ref{the union of roofs}), that is,
					\begin{align*}
						\mathcal{L}_{g}(E_i)&=[(k_i-p-1)\x_4+g,(i+k_i)\x_4+g] \\
						\mathcal{R}_{h}(E_j)&=[\w+(j+k_j-p+1)\x_4+h,\w+(k_j+p)\x_4+h],
					\end{align*} and we enlarge the ranges of them by  $\mathcal{L}_{g}(E_0):=\emptyset$ and $R_{h}(E_{p-2}):=\emptyset$.	
				
				\emph{Step $2$}: Let					
				\begin{align} \label{H_{g,h}}
					H=H_{g,h}(E_i,E_j):= \bigcup_{q\in G'} \Big(\{\w+a\x_4+q\mid a\in J_1\} \cup \{b\x_4+q\mid b\in J_2\} \Big),
				\end{align}
				 where the integer intervals
				 \begin{align*}
				 	J_1:=[k_j-\delta_j,k_i-\delta^i]\ \text{ and }\ J_2:=[i+k_i+\delta^i-p-1,j+k_j+\delta_j-p-1]
				 \end{align*}
				   with $\delta^i:=1-\delta_{i,0}$ and $\delta_j:=\delta_{j,p-2}$ ($\delta_{ab}$ denotes the Kronecker delta, that is, $\delta_{ab}=1$ if $a=b$, otherwise $\delta_{ab}=0$) and 
					$G'$ is the subset of $G$ given by
					\begin{equation*}G':=
						\begin{cases} 
							G & \mbox{if $i= 0$, $j=p-2$,} \\
							G\setminus\{h\} & \mbox{if $i= 0$, $j\neq p-2$,} \\
							G\setminus\{g\} & \mbox{if $i\neq 0$, $j=p-2$,} \\
							G\setminus\{g,h\} & \mbox{if $i \neq 0$, $j\neq p-2$.} 			
						\end{cases}			
					\end{equation*}	
												
					For any $H$-upset $I$, we enlarge it to a subset 
					\begin{align*} \label{S_I}
						S_I:=I\cup (H-\w \setminus I-\w)\end{align*}
					of $U:=H \cup (H-\w)$. We set $S^{I}_{g,h}(E_i,E_j):=  \ \bigoplus_{\x\in S_I}\OO(\x)$ and
				\begin{equation} \label{form of tilting bundles}
					T:= \Big(\bigoplus\limits_{i\le n\le j}E_n\Big) \oplus 
					V_{g,h}(E_i,E_j)
					\oplus S^{I}_{g,h}(E_i,E_j).
				\end{equation}

				We give a typical example to explain our above construction.
				
				\begin{example} \label{example 1} Let $\X$ be a GL projective plane of type $(2,2,2,5)$. Let $E_1=\langle 1,0 \rangle$ and $E_2=\langle 2,0 \rangle$. Fix $g=\x_1-\x_2$ and $h=0$, the  domain $\bigcap_{n=1,2}\Dom_L (E_n)$ contains the line bundles over the left (resp. right) roof
				$$\mathcal{L}_{g}(E_1)=[-\x_1-\x_2-\x_4,\bar{x}_{12}+\x_4]\ \text{ (resp. } \mathcal{R}_{h}(E_2)=[\w-2\x_4,\w+\c]).$$	
				By (\ref{H_{g,h}}), $H=\{\bar{x}_{13}-4\x_4, \bar{x}_{23}-4\x_4\}$.
				We choose an $H$-upset $I:=\{\bar{x}_{13}-4\x_4\}$, and obtain the subset $S_I=\{\bar{x}_{13}-4\x_4, -\x_1+2\x_4\}$ of $U$ and so $T$. 				 
				 The  notation may be illustrated as in Figure \ref{trichotomy}, where we depict the indecomposable direct summands of $T$: the $2$-extension bundles $E_1$ and $E_2$ are shown in circles and  the line bundles contained in $V_{g,h}(E_1,E_2)$ (resp. $S^{I}_{g,h}(E_1,E_2)$)	are colored in red (resp. blue). 
			\begin{figure} [H]
					 \resizebox{\textwidth}{!}{
						\begin{xy} 0;<15pt,0pt>:<0pt,16pt>::
							(14,10) *+{(-\c-\x_4)} ="51",
							(14,6) *+{\langle 2,-4 \rangle} ="32",
							(16,8) *+{  \langle 3,-4 \rangle} ="42",
							(18,10) *+{ (-\c)} ="52",
							(14,2) *+{ \langle 0,-3 \rangle} ="13",
							(16,4) *+{ \langle 1,-3 \rangle} ="23",
							(18,6) *+{  \langle 2,-3 \rangle} ="33",
							(20,8) *+{ \langle 3,-3 \rangle} ="43",
							(22,10) *+{(-4\x_4)} ="53",
							(16,0) *+{ (\w-2\x_4)} ="04",
							(18,2) *+{  \langle 0,-2 \rangle} ="14",
							(20,4) *+{\langle 1,-2 \rangle} ="24",
							(22,6) *+{\langle 2,-2 \rangle} ="34",
							(24,8) *+{\langle 3,-2 \rangle } ="44",
							(26,10) *+{(-3\x_4)} ="54",
							(20,0) *+{ (\w-\x_4)} ="05",
							(22,2) *+{\langle 0,-1 \rangle} ="15",
							(24,4) *+{\langle 1,-1 \rangle} ="25",
							(26,6) *+{\langle 2,-1 \rangle} ="35",
							(28,8) *+{ \langle 3,-1 \rangle} ="45",
							(30,10) *+{(-2\x_4)} ="55",
							(30,10) *+{} ="5500",							
							(24,0) *+{(\w)} ="06",
							(26,2) *{ \langle 0,0 \rangle} ="16",
							(28,4) *+[Fo]{{\begin{smallmatrix} \scalebox{1}{$\langle 1,0 \rangle$}\end{smallmatrix}}} ="26",
							(30,6) *+[Fo]{{\begin{smallmatrix} \scalebox{1}{$\langle 2,0 \rangle$}\end{smallmatrix}}} ="36",
							(32,8) *+{ \langle 3,0 \rangle} ="46",
							(34,10) *+{(-\x_4)} ="56",
							(28,0) *+{ (\w+\x_4)} ="07",
							(28,0) *+{} ="0700",
							(30,2) *+{\langle 0,1 \rangle} ="17",
							(32,4) *+{\langle 1,1 \rangle} ="27",
							(34,6) *+{\langle 2,1 \rangle} ="37",
							(36,8) *+{\langle 3,1 \rangle} ="47",
							(38,10) *+{(0)} ="57",
							(32,0) *+{(\w+2\x_4)} ="08",
							(34,2) *+{\langle 0,2 \rangle} ="18",
							(36,4) *+{\langle 1,2 \rangle} ="28",
							(38,6) *+{ \langle 2,2 \rangle} ="38",
							(40,8) *+{ \langle 3,2 \rangle} ="48",
							(42,10) *+{(\x_4)} ="58",
							(36,0) *+{(\w+3\x_4)} ="09",
							(38,2) *+{ \langle 0,3 \rangle} ="19",
							(40,4) *+{\langle 1,3 \rangle} ="29",
							(42,6) *+{\langle 2,3 \rangle} ="39",
							(44,8) *+{\langle 3,3 \rangle} ="49",
							(40,0) *+{(\w+4\x_4)} ="010",
							(42,2) *+{\langle 0,4 \rangle} ="110",
							(44,4) *+{\langle 1,4 \rangle} ="210",
							(44,0) *+{(\w+\c)} ="011",
							(14,10.8) *+{\scalebox{1}{$-\c-\x_4$}},(14,11.6) *+{\scalebox{1}{$-\x_2-\x_3-\x_4$}},(14,12.4) *+{\scalebox{1}{$-\x_1-\x_3-\x_4$}},(14,13.2) *+{\red \scalebox{1.08}{$-\x_1-\x_2-\x_4$}},
							(18,10.8) *+{\scalebox{1}{$-\c$}},(18,11.6) *+{\scalebox{1}{$-\x_2-\x_3$}},(18,12.4) *+{\scalebox{1}{$-\x_1-\x_3$}},(18,13.2) *+{\red\scalebox{1.08}{$-\x_1-\x_2$}},
							(22,10.8) *+{\scalebox{1}{$-4\x_4$}},(22,11.6) *+{ \scalebox{1}{$\bar{x}_{23}-4\x_4$}},(22,12.4) *+{\blue \scalebox{1.08}{$\bar{x}_{13}-4\x_4$}},(22,13.2) *+{\red\scalebox{1.08}{$\bar{x}_{12}-4\x_4$}},
							(26,10.8) *+{\scalebox{1}{$-3\x_4$}},(26,11.6) *+{\red\scalebox{1.08}{$\bar{x}_{23}-3\x_4$}},(26,12.4) *+{\red\scalebox{1.08}{$\bar{x}_{13}-3\x_4$}},(26,13.2) *+{\red\scalebox{1.08}{$\bar{x}_{12}-3\x_4$}},
							(30,10.8) *+{\scalebox{1}{$-2\x_4$}},(30,11.6) *+{\red\scalebox{1.08}{$\bar{x}_{23}-2\x_4$}},(30,12.4) *+{\red\scalebox{1.08}{$\bar{x}_{13}-2\x_4$}},(30,13.2) *+{\red\scalebox{1.08}{$\bar{x}_{12}-2\x_4$}},
							(34,10.8) *+{\scalebox{1}{$-\x_4$}},(34,11.6) *+{\red\scalebox{1.08}{$\bar{x}_{23}-\x_4$}},(34,12.4) *+{\red\scalebox{1.08}{$\bar{x}_{13}-\x_4$}},(34,13.2) *+{\red\scalebox{1.08}{$\bar{x}_{12}-\x_4$}},
							(38,10.8) *+{\scalebox{1}{$0$}},(38,11.6) *+{\scalebox{1}{$\bar{x}_{23}$}},(38,12.4) *+{\scalebox{1}{$\bar{x}_{13}$}},(38,13.2) *+{\red\scalebox{1.08}{$\bar{x}_{12}$}},
							(42,10.8) *+{\scalebox{1}{$\x_4$}},(42,11.6) *+{\scalebox{1}{$\bar{x}_{23}+\x_4$}},(42,12.4) *+{\scalebox{1}{$\bar{x}_{13}+\x_4$}},(42,13.2) *+{\red\scalebox{1.08}{$\bar{x}_{12}+\x_4$}},			
							(16,-0.8) *+{\scalebox{1}{$-\x_3-3\x_4$}},(16,-1.6) *+{\scalebox{1}{$-\x_2-3\x_4$}},(16,-2.4) *+{\scalebox{1}{$-\x_1-3\x_4$}},(16,-3.2) *+{\red\scalebox{1.08}{$\w-2\x_4$}},
							(20,-0.8) *+{\scalebox{1}{$-\x_3-2\x_4$}},(20,-1.6) *+{\scalebox{1}{$-\x_2-2\x_4$}},(20,-2.4) *+{\scalebox{1}{$-\x_1-2\x_4$}},(20,-3.2) *+{\red\scalebox{1.08}{$\w-\x_4$}},
							(24,-0.8) *+{\scalebox{1}{$-\x_3-\x_4$}},(24,-1.6) *+{\red\scalebox{1.08}{$-\x_2-\x_4$}},(24,-2.4) *+{\red\scalebox{1.08}{$-\x_1-\x_4$}},(24,-3.2) *+{\red\scalebox{1.08}{$\w$}},
							(28,-0.8) *+{\scalebox{1}{$-\x_3$}},(28,-1.6) *+{\red\scalebox{1.08}{$-\x_2$}},(28,-2.4) *+{\red\scalebox{1.08}{$-\x_1$}},(28,-3.2) *+{\red\scalebox{1.08}{$\w+\x_4$}},
							(32,-0.8) *+{\scalebox{1}{$-\x_3+\x_4$}},(32,-1.6)*+{\red\scalebox{1.08}{$-\x_2+\x_4$}},(32,-2.4) *+{\red\scalebox{1.08}{$-\x_1+\x_4$}},(32,-3.2) *+{\red\scalebox{1.08}{$\w+2\x_4$}},
							(36,-0.8) *+{\scalebox{1}{$-\x_3+2\x_4$}},(36,-1.6) *+{\scalebox{1}{$-\x_2+2\x_4$}},(36,-2.4) *+{\color{blue}\scalebox{1.08}{$-\x_1+2\x_4$}},(36.3,-3.2) *+{\red\scalebox{1.08}{$\w+3\x_4$}},
							(40,-0.8) *+{\scalebox{1}{$-\x_3+3\x_4$}},(40,-1.6) *+{\scalebox{1}{$-\x_2+3\x_4$}},(40,-2.4) *+{\scalebox{1}{$-\x_1+3\x_4$}},(40,-3.2) *+{\red\scalebox{1.08}{$\w+4\x_4$}},
							(43.8,-0.8) *+{\scalebox{1}{$-\x_3+4\x_4$}},(43.8,-1.6) *+{\scalebox{1}{$-\x_2+4\x_4$}},(43.8,-2.4) *+{\scalebox{1}{$-\x_1+4\x_4$}},(44,-3.2) *+{\red\scalebox{1.08}{$\w+\c$}},
							"51", {\ar"42"},
							"32", {\ar"23"},
							"32", {\ar"42"},
							"42", {\ar"33"},
							"42", {\ar"52"},
							"52", {\ar"43"},
							"13", {\ar"23"},
							"33", {\ar"43"},
							"13", {\ar"04"},
							"23", {\ar"14"},
							"23", {\ar"33"},
							"33", {\ar"24"},
							"43", {\ar"34"},
							"43", {\ar"53"},
							"53", {\ar"44"},
							"04", {\ar"14"},
							"14", {\ar"24"},
							"34", {\ar"44"},
							"14", {\ar"05"},
							"24", {\ar"15"},
							"24", {\ar"34"},
							"34", {\ar"25"},
							"44", {\ar"35"},
							"44", {\ar"54"},
							"54", {\ar"45"},
							"05", {\ar"15"},
							"15", {\ar"25"},
							"35", {\ar"45"},
							"35", {\ar"26"},
							"15", {\ar"06"},
							"25", {\ar"16"},
							"25", {\ar"35"},
							"45", {\ar"36"},
							"45", {\ar"55"},
							"55", {\ar"46"},
							"06", {\ar"16"},
							"16", {\ar"26"},
							"36", {\ar"46"},
							"36", {\ar"27"},
							"16", {\ar"07"},
							"26", {\ar"17"},
							"26", {\ar"36"},
							"46", {\ar"37"},
							"46", {\ar"56"},
							"56", {\ar"47"},
							"07", {\ar"17"},
							"17", {\ar"27"},
							"37", {\ar"47"},
							"37", {\ar"28"},
							"27", {\ar"37"},
							"17", {\ar"08"},
							"27", {\ar"18"},
							"47", {\ar"38"},
							"47", {\ar"57"},
							"57", {\ar"48"},
							"08", {\ar"18"},
							"18", {\ar"28"},
							"28", {\ar"38"},
							"38", {\ar"48"},
							"38", {\ar"29"},
							"18", {\ar"09"},
							"28", {\ar"19"},
							"48", {\ar"39"},
							"48", {\ar"58"},
							"58", {\ar"49"},
							"09", {\ar"19"},
							"19", {\ar"29"},
							"39", {\ar"49"},
							"39", {\ar"210"},
							"19", {\ar"010"},
							"29", {\ar"110"},
							"29", {\ar"39"},
							"010", {\ar"110"},
							"110", {\ar"210"},
							"110", {\ar"011"},
							%
						\end{xy}
						} \caption{An example for the trichotomy of the object $T$} \label{trichotomy}
				\end{figure}
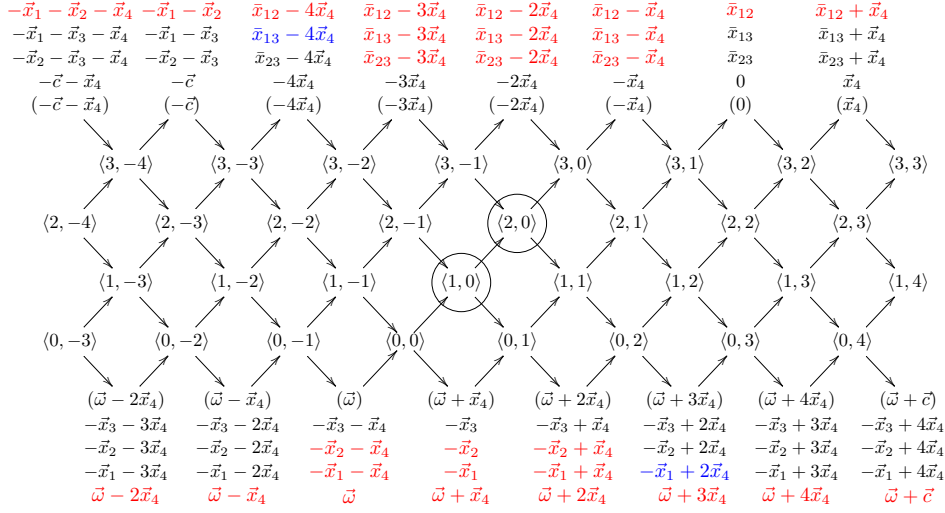					
				\end{example}

				By the definition of  $U$, we have
				\begin{eqnarray*}				
					U=\bigcup_{q\in G'} \Big(\{\w+a\x_4+q\mid a\in K_1\} \cup \{b\x_4+q\mid b\in K_2\} \Big)
				\end{eqnarray*}	
				with the integer intervals
				\begin{eqnarray*}
					K_1&=&[k_j-\delta_j,k_i-\delta^i]\cup [i+k_i+\delta^i+1,j+k_j+\delta_j+1], \\
					K_2&=&[i+k_i+\delta^i-p-1, j+k_j+\delta_j-p-1]\cup [k_j-\delta_j,k_i-\delta^i].
				\end{eqnarray*}	
				
				The following lemma explains the construction of $U$.
				
				\begin{lemma} \label{maximal subset}  Under the above setting, let $\Gamma$ be the maximal subset of 	
					$$\big\{\x\in\L \mid \OO(\x) \in \bigcap_{n=i,j}\Dom_{L}(E_n)\ \text{with}\ \x \notin \mathcal{L}_{g}(E_i)\cup \mathcal{R}_{h}(E_j)\big\} $$ such that $\OO(\x)\oplus\OO(\y)$ is rigid for any $\x \in \Gamma\ \text{and}\ \y \in \mathcal{L}_{g}(E_i)\cup \mathcal{R}_{h}(E_j)$. Then we have $U=\Gamma$.
				\end{lemma}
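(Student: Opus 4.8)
The plan is to identify $\Gamma$ with $U$ by reducing the rigidity condition against an entire roof to membership in a single dilated interval, and then performing the resulting finite lattice bookkeeping inside the domain $\bigcap_{n=i,j}\Dom_{L}(E_n)$ of \eqref{cap E_i,j}. First observe that the property defining $\Gamma$---that $\OO(\x)\oplus\OO(\y)$ be rigid for every $\y$ lying on the two roofs---is closed under unions of candidate sets, since it is a condition between $\x$ and the fixed roofs only. Hence the maximal subset is unique and equals the set of all $\x$ with $\OO(\x)\in\bigcap_{n=i,j}\Dom_{L}(E_n)$, with $\x\notin\mathcal{L}_{g}(E_i)\cup\mathcal{R}_{h}(E_j)$, and with $\OO(\x)$ rigid against every line bundle on the two roofs; it then suffices to compute this set explicitly.

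For the reduction, recall from Lemma~\ref{rigid domain among line bundles} that $\OO(\x)\oplus\OO(\y)$ is rigid if and only if $-2\c\le\y-\x\le2\c$. Because each roof is an interval $[a,b]$ and the relation $\y-\x\le2\c$ (resp.\ $\x-\y\le2\c$) is monotone in $\y$ for the partial order on $\L$, rigidity of $\OO(\x)$ against all $\OO(\y)$ with $\y\in[a,b]$ is governed solely by the two endpoints and amounts to $b-2\c\le\x\le a+2\c$. Applying this to the roofs of \eqref{the union of roofs} produces the two explicit intervals
\[
\Lambda_L=[(i+k_i-2p)\x_4+g,\ (k_i+p-1)\x_4+g],\qquad \Lambda_R=[\w+(k_j-p)\x_4+h,\ \w+(j+k_j+p+1)\x_4+h],
\]
so that $\Gamma=\{\x\mid\OO(\x)\in\bigcap_{n=i,j}\Dom_{L}(E_n)\}\cap\Lambda_L\cap\Lambda_R\setminus(\mathcal{L}_{g}(E_i)\cup\mathcal{R}_{h}(E_j))$; when $i=0$ (resp.\ $j=p-2$) the roof $\mathcal{L}_{g}(E_0)$ (resp.\ $\mathcal{R}_{h}(E_{p-2})$) is empty and the corresponding constraint is simply dropped.

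I would then split $\bigcap_{n=i,j}\Dom_{L}(E_n)$ as in \eqref{cap E_i,j} into the bottom line bundles $\OO(\w+m\x_4+q)$ with $m\in I_1$ and the top line bundles $\OO(n\x_4+q)$ with $n\in I_2$, where $q$ runs over $G$, and test each configuration of $q$ relative to the reference directions $g,h$ for membership in $\Lambda_L$ and $\Lambda_R$. Every such test unwinds to an order relation in $\L$, i.e.\ a condition $R_{\bullet}\neq0$ that I would read off in normal form (equivalently, via the $\Hom$-criteria of Lemma~\ref{Hom-domains 2}); the decisive feature, traceable to the dichotomy in Corollary~\ref{rigid domain among line bundles cor}, is that replacing the reference direction by a nonzero element of $G$ either empties the admissible window of the $\x_4$-degree or shrinks it by $p$ at each end. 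Carrying this out shows that the direction $q=g$ contributes nothing to the bottom layer, that the entire $q=h$ bottom layer lies inside $\mathcal{R}_{h}(E_j)$ and is thus removed, and that for $q\notin\{g,h\}$ the surviving bottom indices are exactly $[k_j,j+k_j+1]\setminus[k_i,i+k_i+1]=[k_j,k_i-1]\cup[i+k_i+2,j+k_j+1]$, matching $K_1$ in the generic case; the top layer is handled symmetrically under the involution given by $-\w$ (which is how $U=H\cup(H-\w)$ was built) and yields $K_2$, while the surviving directions are precisely $G'$. The Kronecker corrections $\delta^i,\delta_j$ then account uniformly for the degenerate endpoints $i=0$ and $j=p-2$.

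The main obstacle will be the $G$-component bookkeeping underlying the previous paragraph. The same-layer comparisons are routine, but the cross-layer ones---a bottom line bundle tested against the top-type interval $\Lambda_L$, and a top line bundle tested against the bottom-type interval $\Lambda_R$---involve the interaction of the shift by $\w$ with a nonzero element of $G$ and behave qualitatively differently, in that they can be empty in the reference direction. The delicate point is to verify that, after intersecting all three constraints and subtracting the two roofs, the endpoints of the surviving $\x_4$-windows coincide with those of $K_1$ and $K_2$ with no off-by-one error, uniformly across the four cases singled out by $\delta^i$ and $\delta_j$ in the construction of $H$.
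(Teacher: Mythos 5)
Your proposal is correct and follows essentially the same route as the paper: the reduction of rigidity against a whole roof to the single dilated interval $[b-2\c,\,a+2\c]$ (via convexity of the roof and Lemma \ref{rigid domain among line bundles}) reproduces exactly the paper's inequalities such as $\w+(k_j-p)\x_4+h\le\x\le\w+(j+k_j+p+1)\x_4+h$, and the subsequent layer-by-layer, direction-by-direction bookkeeping is precisely the paper's four-case computation, with your stated surviving index sets agreeing with $K_1$, $K_2$ and $G'$. The only difference is presentational: the paper writes out the four cases determined by $\delta^i,\delta_j$ explicitly rather than treating them uniformly.
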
	
				\begin{proof}
					
					Concerning the values of $i$ and $j$, we consider the following four cases.
					
					\emph{Case $1$}: For the case $i=0$ and $j=p-2$, we have $\mathcal{L}_{g}(E_i)= \mathcal{R}_{h}(E_j)=\emptyset$ and $G'=G$ by our convention. By (\ref{cap E_i,j}), $\x\in \Gamma$ if and only if 
					\begin{eqnarray*}
						\x=\w+a\x_4+f \ \ \text{or} \ \ b\x_4+f
					\end{eqnarray*}
					for some  $a\in [k_j-1,k_j+p]$, $b\in [k_i-p-1,k_i]$ and $f\in G$.
					
					\emph{Case $2$}: For the case $i=0$ and $j\neq p-2$, we have $\mathcal{L}_{g}(E_i)=\emptyset$.  Since $\mathcal{R}_{h}(E_j)$ is a convex set,    	$\OO(\x)\oplus\OO(\y)$ is rigid for all $\y \in \mathcal{R}_{h}(E_j)$,  which 
					is equivalent to 
					\begin{equation}\label{maximal subset 1}
						\w+(k_j-p)\x_4+h\le \x \le \w+(j+k_j+p+1)\x_4+h.
					\end{equation} 
					
					If moreover $\OO(\x) \in \bigcap_{n=i,j}\Dom_{\triangle,L}(E_n)$, 
					then 
					we have
					\begin{equation}\label{x_{0,notp-2}_1}
						\x=\w+a_1\x_4+h \ \ \text{or} \ \ \w+a_2\x_4+f,
					\end{equation}  
					where $a_1\in[j+k_j-p+1, k_j+p]$, $a_2\in[k_j, j+k_j+1]$ and $f\in G\setminus\{h\}$.
					
				On the other hand, if $\OO(\x) \in \bigcap_{n=i,j}\Dom_{\bigtriangledown,L}(E_n)$, we let $\x= c\x_4+f'$ with $c \in [k_i-p-1,k_i]$ and $f'\in G$. Since $E_i \in \Dom^{+}_\triangle(E_j)$, we have $k_j\le k_i\le j+k_j$, which implies that $\x$ satisfies (\ref{maximal subset 1})
					 if and only if $f'\neq h$.  
					Thus 
					\begin{equation}\label{x_{0,notp-2}_2}
						\x=c\x_4+f
					\end{equation}
					with $c \in [k_i-p-1,k_i]$ and $f\in G\setminus\{h\}$. By removing $x\in \mathcal{R}_{h}(E_j)$ from (\ref{x_{0,notp-2}_1}) and (\ref{x_{0,notp-2}_2}) respectively,  we further have that $\x\in \Gamma$ if and only if
					\begin{equation}\label{x_{0,notp-2}_Ans}
						\x=\w+a\x_4+f \ \ \text{or} \ \ b\x_4+f
					\end{equation}
					where $a\in [k_j,j+k_j+1]$, $b\in [k_i-p-1,j+k_j-p-1]\cup [k_j,k_i]$ and $f\in G\setminus\{h\}$.

					\emph{Case $3$}: For the case $i\neq 0$ and $j= p-2$, we have $\mathcal{R}_{h}(E_j)=\emptyset$. Since $\mathcal{L}_{g}(E_i)$ is a convex set, $\OO(\x)\oplus\OO(\y)$ is rigid for all $\y \in \mathcal{L}_{g}(E_i)$, which is equivalent to 
					\begin{equation*}
						(i+k_i-2p)\x_4+g	\le \x \le  (k_i+p-1)\x_4+g.
					\end{equation*} 
					One can easily check that if $\x\in \bigcap_{n=i,j}\Dom_{\bigtriangledown,L}(E_n)$, then we have
					\begin{equation}\label{x_{not0,p-2}_1}
						\x=c_1\x_4+g\ \ \text{or} \ \ c_2\x_4+f
					\end{equation}
					with  $c_1\in[k_i-p-1, i+k_i]$, $c_2\in [i+k_i-p, k_i-1]$ and $f\in G\setminus\{g\}$. 
					Otherwise $\x\in \bigcap_{n=i,j}\Dom_{\triangle,L}(E_n)$, by a similar argument as in Case $2$, we have
					\begin{equation}\label{x_{not0,p-2}_2}
						\x=\w+c\x_4+f
					\end{equation}
					with $c\in [k_j-1,k_j+p]$ and $f\in G\setminus\{g\}$. By removing $x\in R_{g}(E_i)$ from (\ref{x_{not0,p-2}_1}) and (\ref{x_{not0,p-2}_2}) respectively,   we further have that $\x\in \Gamma$ if and only if
					\begin{equation} \label{x_{not0,p-2}_Ans}
						\x=\w+a\x_4+f\ \ \text{or} \ \ b\x_4+f,
					\end{equation}
					where  $a\in [k_j-1,k_i-1]\cup[i+k_i+2,k_j+p]$, $b\in [i+k_i-p,k_i-1]$ and $f\in G\setminus\{g\}$.

					\emph{Case $4$}: For the case $i\neq 0$ and $j\neq p-2$, we assume $\OO(x) \in \bigcap_{n=i,j}\Dom_{L}(E_n)$ with $\x \notin \mathcal{L}_{g}(E_i)\cup \mathcal{R}_{h}(E_j)$.  
					Combining both Case $2$ and $3$,  $\OO(\x) \oplus \OO(\y)$ is rigid for any $\y \in \mathcal{L}_{g}(E_i)\cup \mathcal{R}_{h}(E_j)$ if and only if $\x$ satisfies both (\ref{x_{0,notp-2}_Ans}) and (\ref{x_{not0,p-2}_Ans}), that is, 
					\begin{equation*}
						\x=\w+a\x_4+f \ \ \text{or} \ \ b\x_4+f,
					\end{equation*}
					where 
					$a\in [k_j,k_i-1]\cup [i+k_i+2, j+k_j+1]$, $b\in [i+k_i-p,j+k_j-p-1]\cup [k_j,k_i-1]$ and $f\in G\setminus\{g,h\}$.

					On the other hand,  we can compute $U$  by substituting the value of $i$ and $j$ for the above four situations, respectively. The assertion follows immediately.
				\end{proof}
				

			Our main theorem in this section is the following.
			
			\begin{theorem}\label{main theorem}  		
				Let $V$ be an ACM bundle in $\coh \X$. Assume that any  rank-four indecomposable direct summands $E$ and $F$ of $V$ satisfy $E\in\Dom^{+}(F)$. Then $V$ is tilting on $\X$ if and only if $V$ has the form (\ref{form of tilting bundles}).
			\end{theorem}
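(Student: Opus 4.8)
The plan is to prove both implications by combining a rigidity-and-counting analysis with the thick-generation results of Section~\ref{sec:Indecomposable ACM bundles of rank four}. The organizing principle is that any tilting object in $\DDD^{\bo}(\coh\X)$ has exactly $\rk K_0(\coh\X)=5p+7$ pairwise non-isomorphic indecomposable summands, so recognising tilting bundles among rigid ACM bundles reduces to counting rigid summands and then checking thick generation. I will freely use that $V$ rigid forces each line-bundle summand $L'$ to satisfy $L'\in\bigcap_n\Dom_L(E_n)$ (rigidity of $L'\oplus E_n$ for every rank-four summand $E_n$), and that two rank-four summands lying in $\Dom^{+}$ of one another are rigid by the very definition of $\Dom$ together with Corollary~\ref{Domain corollary}.

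For the \emph{sufficiency} direction I would assume $V=T$ is of the form (\ref{form of tilting bundles}) and check rigidity by splitting into three types of pairs. Two rank-four summands $E_s,E_t$ are rigid by the hypothesis $E_s\in\Dom^{+}(E_t)$. A rank-four summand $E_n$ is rigid against every line-bundle summand because the roof line bundles lie in $\bigcap_n\Dom_L(E_n)$ by construction of $V_{g,h}(E_i,E_j)$ and Lemma~\ref{Domain property}, while the summands of $S^{I}_{g,h}(E_i,E_j)$ lie in $U=\Gamma\subset\bigcap_n\Dom_L(E_n)$ by Lemma~\ref{maximal subset}. Rigidity among line bundles uses Lemma~\ref{rigid domain among line bundles}: the roof-against-roof and roof-against-$S^{I}$ cases are exactly the defining property of $\Gamma$ in Lemma~\ref{maximal subset}, and rigidity inside $S^{I}_{g,h}(E_i,E_j)$ follows from the $H$-upset condition on $I$ combined with the $(\w,2)$-segment analysis of Remark~\ref{double piece}. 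A direct count then gives $|T|=(j-i+1)+|V_{g,h}(E_i,E_j)|+|S^{I}_{g,h}(E_i,E_j)|=5p+7$. For generation I would apply Proposition~\ref{reback lemma}: the left roof yields $\Dom^{+}_{\triangle}(E_i)\subset\thick T$ and the right roof yields $\Dom^{+}_{\bigtriangledown}(E_j)\subset\thick T$; the boundary line bundles $\langle-1,n\rangle$ and $\langle p-1,n\rangle$ produced inside its proof, together with the middle line bundles in $S^{I}_{g,h}(E_i,E_j)$, exhaust a full fundamental domain of $\L$-twists of $\OO$. Hence $\thick T$ contains a degree shift of the $2$-canonical tilting bundle $T^{\ca}$, so $\thick T=\DDD^{\bo}(\coh\X)$ and $T$ is tilting.

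For the \emph{necessity} direction, suppose $V$ is a tilting ACM bundle whose rank-four summands are pairwise in $\Dom^{+}$. By Lemma~\ref{Hom-domains 2}, two rank-four summands in $\Dom^{+}$ of one another must have distinct first indices, so they form a chain; let $i\le j$ be its extreme indices. Since $V$ is tilting it has $5p+7$ summands with all line-bundle summands in $\bigcap_n\Dom_L(E_n)$, and Proposition~\ref{Num rigid lemma} bounds the rigid line bundles there by $5p+i-j+6$, forcing at least $j-i+1$ rank-four summands; as there are at most $j-i+1$ available indices, the chain is gapless and equals $E_i,\dots,E_j$, with the inequalities $k_n\le k_{n-1}\le k_n+1$ read off from the explicit description of $\Dom^{+}$ in Setting~\ref{setting 1}. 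The line-bundle part is then a \emph{maximal} rigid subset of $\bigcap_n\Dom_L(E_n)$ of size $5p+i-j+6$. Decomposing $\bigcap_n\Dom_L(E_n)$ into its $(\w,1)$- and $(\w,2)$-segments as in the proof of Proposition~\ref{Num rigid lemma}, maximality forces each segment to carry a maximal rigid configuration; Remark~\ref{double piece} pins these configurations down, and reassembling them across segments recovers a choice of $g,h\in G$ and of an $H$-upset $I$ for which the line-bundle part is precisely $V_{g,h}(E_i,E_j)\oplus S^{I}_{g,h}(E_i,E_j)$, using Lemma~\ref{maximal subset} to identify the non-roof contributions with $U$.

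The hardest step is the classification in the necessity direction: showing that \emph{every} maximal rigid line-bundle configuration in $\bigcap_n\Dom_L(E_n)$ arises from a single global $H$-upset. This requires matching the local $(\w,2)$-segment data of Remark~\ref{double piece} into one coherent upset $I$ and verifying that the boundary choices at the two ends of the chain force the exceptional set $G'$ and the shifts $\delta^i,\delta_j$ appearing in (\ref{H_{g,h}}). Dually, the delicate point in the sufficiency direction is confirming that the roofs together with $S^{I}_{g,h}(E_i,E_j)$ generate a full fundamental domain, i.e. that Proposition~\ref{reback lemma} combined with the upset line bundles leaves no $\L$-twist of $\OO$ outside $\thick T$; once this is established the equality $\thick T=\DDD^{\bo}(\coh\X)$ is immediate from $\thick T^{\ca}=\DDD^{\bo}(\coh\X)$.
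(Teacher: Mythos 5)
Your necessity direction follows the paper's argument almost step for step: the count $|J|\le j-i+1$ against Proposition~\ref{Num rigid lemma}'s bound $|P|\le 5p+i-j+6$ forces both to be equalities, Remark~\ref{double piece} pins down the $(\w,2)$-segment configurations to recover $g$ and $h$, and Lemma~\ref{maximal subset} identifies the residual line bundles with $S_I$ for an $H$-upset $I$. The rigidity half of your sufficiency direction also matches the paper (Lemmas~\ref{T is rigid}, \ref{S is rigid} and Proposition~\ref{rigid condition}).

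The genuine gap is in your generation argument. You claim that because the roofs and $S^{I}_{g,h}(E_i,E_j)$ leave no $\L$-twist of $\OO$ outside $\thick T$ ``in the sense of exhausting a fundamental domain,'' $\thick T$ must contain a degree shift of $T^{\ca}$. But a complete set of representatives of $\L/\Z\w$ has only $4p+8$ elements, whereas $T^{\ca}$ consists of the $5p+7$ line bundles indexed by the interval $[0,2\c]$, which meets many $\w$-orbits more than once; having one line bundle per $\w$-orbit in $\thick T$ does not produce a translate of $[0,2\c]$, and the line bundles actually produced inside the proof of Proposition~\ref{reback lemma} (those of degree $\w+n\x_4+g$ or $n\x_4+g$) do not include, for instance, shifts of $\OO(\x_1+\x_2)$ without further exact sequences that you have not supplied. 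The paper closes the argument exactly where you leave it open: it extends $\bigoplus_{i\le n\le j}E_n$ to the full chain $E=\bigoplus_{k=0}^{p-2}E_k$, assembles $W=E\oplus\bigl(\bigoplus_{\x\in P}\OO(\x)\bigr)$ with $P$ a complete set of representatives of $\L/\Z\w$, checks that $W$ is rigid of cardinality $5p+7$ and hence of the form (\ref{form of tilting bundles}) with $i=0$ and $j=p-2$, and then invokes Proposition~\ref{special case} --- that is, the slice criterion of Theorem~\ref{tilting-cluster tilting} --- to conclude that $W$ is tilting; since $W\in\thick T$ by Proposition~\ref{reback lemma}, generation follows. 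Without this slice/cluster-tilting input, your fundamental-domain claim does not suffice to establish $\thick T=\DDD^{\bo}(\coh\X)$.
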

							
				\begin{proof}[Proof of Theorem~\rm\ref{main theorem} `$\Rightarrow$'] Take a decomposition $V=\big(\bigoplus_{n\in J}E_n\big)\oplus P$, where $P$ is a maximal projective direct summand of $V$. Notice that for each rank-four indecomposable ACM bundle $E$, $\Dom^{+}(E)$ meets  the $\tau$-orbit of $E$ exactly once. Consequently, for any $s,t\in J$ with $s\neq t$, $E_s$ and $E_t$ belong to different $\tau$-orbits,  implying that  $J$ can be identified with a subset of $\{0,\dots,p-2\}$.  Let $i$ (resp. $j$) be the minimal (resp. maximal) element of $J$. Thus we have 
					\begin{equation} \label{number J}
						|J|\le j-i+1.
					\end{equation}					
					Let $E_n:=\langle n,k_n\rangle$ for $n\in J$.
					 By Lemma \ref{Domain property},  $\bigcap_{n\in J}\Dom_{L}(E_n)= \bigcap_{n=i,j}\Dom_{L}(E_n)$ holds. Moreover, by Proposition \ref{Num rigid lemma}, the rigidity of $V$ implies 
					\begin{equation} \label{number P}
						|P|\le 5p+i-j+6.
					\end{equation}
					It follows that $|V|=|P|+|J|\le 5p+7=|V|$, so both the $``\le"$ in (\ref{number J}) and (\ref{number P}) are  equalities. Then we have $J=[i,j]$ and thus $V$ contains the direct summands $\bigoplus_{i\le n\le j}E_n$, where $E_n=\langle n,k_n\rangle$ with $k_n \le k_{n-1} \le k_{n}+1$ for $i+1\le n \le j$.
					
					Next we show that there exist $g,h\in G$ with $g\neq h$ such that $V_{g,h}(E_i,E_j)$ is a direct summand of $V$.  We only show that there exists $g\in G$ such that $\OO(\x)$ is a direct summand of $V$ for any $\x\in \mathcal{L}_{g}(E_i)$ since one can show the other case similarly. If $i=0$, then this is nothing. Thus we assume $i\neq 0$. 
					By the proof of Proposition \ref{Num rigid lemma} and Remark \ref{double piece}, there are the following  direct summands of $V$
					\begin{equation}\label{equ 1.1}
						L_\ell(g_\ell) \oplus \bigoplus_{f\in G\setminus\{g_\ell\}} L_\ell(f-\w) \oplus L_\ell(g_\ell-2\w),
					\end{equation}
					where $L_\ell:=\OO((k_i+\ell-p-2)\x_4)$ and $g_\ell \in G$ for $1\le \ell \le i$. By Corollary \ref{rigid domain among line bundles cor}(c), the rigidity of $V$ implies $g_1=\dots=g_{i}=:g$. Since $V$ has the direct summand $L_1(g)$ (resp. $L_{i}(g-2\w$)) by (\ref{equ 1.1}),  we have that $L_1(g+\x_{l}-\w)$ (resp. $L_{i}(g-\w-\x_{l})$) is not a direct summand of $V$ for any $1\le l \le 3$ by Lemma \ref{rigid domain among line bundles}.  By cardinality reason, for any $1\le l \le 3$, there are also the following direct summands of $V$ 
								\begin{equation}\label{equ 1.2}
										L_1(g+\x_{l})\ \ \text{and} \ \ L_{i}(g-2\w-\x_{l}).
								\end{equation} 
								
						By Corollary \ref{rigid domain among line bundles cor}(b),  the direct summands $L_1(g)$ and $L_i(g-2\w)$ of $V$ implies that $V$ has no contain the line bundle $\OO(m\x_4+g-\w)$ for any $m\in \Z$. Moreover, by cardinality reason,  for any $a\in [k_i-p-1,i+k_i]$, $	\OO(a\x_4+g)$ is the direct summand of $V$.
						This together with (\ref{equ 1.1}) and 
								(\ref{equ 1.2}) implies that $\OO(\x)$ is a direct summand of $V$ for any $\x\in \mathcal{L}_{g}(E_i)$. In fact, $\x\in \mathcal{L}_{g}(E_i)$ if and only if $$\x=a\x_4+g \ \ \text{or}\ \ \x=\x_l+b\x_4+g$$ where $1\le l\le 3$, $a\in[k_i-p-1,i+k_i]$ and $b\in [k_i-p-1,i+k_i-p]$. 
					

					Finally we show that there exists an $H$-upset of $I$ such that  $S^{I}_{g,h}(E_i,E_j)$ is also a direct summand of $T$. Let $$\Sigma:=\{\x \in \L \mid \OO(\x) \in \add P, \ \x \notin \mathcal{L}_{g}(E_i)\cup \mathcal{R}_{h}(E_j)\}.$$
					By Lemma \ref{maximal subset},  $\Sigma$ is a subset of $U$, where $U:=H\cup (H-\w)$. By cardinality reason, each $\w$-orbit of line bundles in $U$ contains exactly one element in $\{\OO(\x) \mid \x \in \Sigma\}$. Let $I:=\Sigma \cap H$ and thus $\Sigma=I\cup (H-\w \setminus I-\w)=:S_I$. It remains to show that $I$ is an $H$-upset. Let $\x \in I$ and $\y \in H$ with $\x \le \y$. Assume $\y \notin \Sigma$, then we have $\y-\w \in \Sigma$. The rigidity of $V$ implies that $\y-\w-\x\le 2\c$, a contradiction since $\x \le \y$ holds. Hence $y\in \Sigma$ and then $y\in I$. Thus the assertion follows.
				\end{proof}
				
			In the rest, we prepare to prove the direction `$\Leftarrow$' of Theorem~\rm\ref{main theorem}.
			We need to prove that $T$ of the form $(\ref{form of tilting bundles})$ is a tilting bundle.
			To prove this, we need the following observations.

				\begin{lemma}\label{T is rigid} $V_{g,h}(E_i,E_j)$ is rigid.	
				\end{lemma}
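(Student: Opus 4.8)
The plan is to reduce the rigidity of $V_{g,h}(E_i,E_j)$ to the combinatorial criterion of Lemma~\ref{rigid domain among line bundles} and then to a short computation in the ordered group $\L$. Write $S:=\mathcal{L}_g(E_i)\cup\mathcal{R}_h(E_j)$, so that $V_{g,h}(E_i,E_j)=\bigoplus_{\x\in S}\OO(\x)$ is a direct sum of line bundles. Since $\Ext^1(\OO(\x),\OO(\y))=0$ for all $\x,\y$ by (\ref{extension spaces between line bundles}), rigidity of this sum is equivalent to $\OO(\x)\oplus\OO(\y)$ being rigid for every pair $\x,\y\in S$, which by Lemma~\ref{rigid domain among line bundles} means $-2\c\le\y-\x\le 2\c$. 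By the symmetry of this condition in $\x$ and $\y$, it suffices to prove $\y-\x\le 2\c$ for all ordered pairs $\x,\y\in S$.

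First I would use that $\mathcal{L}_g(E_i)$ and $\mathcal{R}_h(E_j)$ are, by their construction in Step~1, the intervals $[\alpha_L,\beta_L]$ and $[\alpha_R,\beta_R]$ with $\alpha_L=(k_i-p-1)\x_4+g$, $\beta_L=(i+k_i)\x_4+g$, $\alpha_R=\w+(j+k_j-p+1)\x_4+h$ and $\beta_R=\w+(k_j+p)\x_4+h$. The elementary monotonicity of the order on $\L$ gives, whenever $\alpha\le\x,\y\le\beta$, that $\y-\x\le\beta-\alpha$. Hence for $\x,\y$ in the same interval the difference is dominated by its diameter, and for $\x,\y$ in different intervals by $\beta_R-\alpha_L$ or $\beta_L-\alpha_R$. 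This reduces the lemma to the four inequalities $\beta_L-\alpha_L\le 2\c$, $\beta_R-\alpha_R\le 2\c$, $\beta_R-\alpha_L\le 2\c$ and $\beta_L-\alpha_R\le 2\c$ (the two mixed bounds being vacuous when $i=0$ or $j=p-2$, since then one roof is empty).

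The two same-interval bounds are immediate: $2\c-(\beta_L-\alpha_L)=(p-i-1)\x_4$ and $2\c-(\beta_R-\alpha_R)=(j+1)\x_4$, both in $\L_+$ by $0\le i,j\le p-2$. The two mixed bounds are the crux, and are exactly where $g\ne h$ enters. From $E_i\in\Dom^{+}(E_j)$ one has $k_j\le k_i$ and $i+k_i\le j+k_j$ (as in the proof of Lemma~\ref{two or three elements}), so I set $d:=k_i-k_j\ge 0$ and $e:=(i+k_i)-(j+k_j)\le 0$. The decisive observation is that every nonzero element of $G$ is $2$-torsion, whence for $g\ne h$ the difference $g-h=g+h$ is a nonzero element of $G$, say $\x_a-\x_b$ with $\{a,b\}\subset\{1,2,3\}$; writing $m$ for the remaining index, $\x_1+\x_2+\x_3+(g-h)=2\x_a+\x_m=\c+\x_m$. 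Substituting $-\w=\x_1+\x_2+\x_3+(1-p)\x_4$ and $\c=p\x_4$, the two mixed defects simplify to
\[2\c-(\beta_R-\alpha_L)=\x_m+d\x_4\qquad\text{and}\qquad 2\c-(\beta_L-\alpha_R)=\x_m+(-e)\x_4,\]
where for the second equality one rewrites $-\x_m=\x_m-\c$ in normal form; both expressions lie in $\L_+$ since $d\ge 0$, $-e\ge 0$ and $\x_m\ge 0$.

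The genuine obstacle is the mixed case. With $g=h$ the same computation yields defect $\x_1+\x_2+\x_3+(d-p)\x_4$, which for $d=0$ equals $\x_1+\x_2+\x_3-\c<0$, so the endpoint estimate—and indeed rigidity itself—would fail; it is precisely the extra summand $\x_m\ge 0$ produced by $g\ne h$ that rescues both inequalities. I would therefore isolate the identity $\x_1+\x_2+\x_3+(g-h)=\c+\x_m$ as a preliminary observation before the main computation, and take care that the apparent subtraction $-\x_m$ in the second defect is resolved through the normal form $-\x_m=\x_m-\c$, which is exactly what turns $(p-e)\x_4-\x_m$ into the manifestly nonnegative $\x_m+(-e)\x_4$.
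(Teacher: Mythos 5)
Your proposal is correct and follows essentially the same route as the paper: reduce to pairwise rigidity via Lemma~\ref{rigid domain among line bundles}, use the interval (convexity) structure of the two roofs to reduce to the extreme endpoint cases, and resolve the mixed case through the identity $\x_1+\x_2+\x_3+(g-h)=\c+\x_m$, which is equivalent to the paper's observation that $\w+h-g=-\x_\ell-\x_4$ for some $1\le\ell\le 3$. The only difference is that you carry out all four endpoint computations explicitly and flag where $g\neq h$ is used, whereas the paper treats one representative case and leaves the rest as similar.
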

				\begin{proof}  Let $\x,\y\in \mathcal{L}_{g}(E_i)\cup \mathcal{R}_{h}(E_j)$. Since $\mathcal{L}_{g}(E_i)$ and $\mathcal{R}_{h}(E_j)$ are both convex sets,
					by Lemma \ref{rigid domain among line bundles}, we only need show that the condition $-2\c \le\y-\x\le2\c$  holds for the four extreme cases, corresponding to the possible combinations of
					\begin{align*} 
						\x&=(k_i-p-1)\x_4+g \ \ \text{or} \ \  \w+(j+k_j-p+1)\x_4+h, \\ 
						\y&=(k_i+i)\x_4+g \ \ \text{or} \ \ \w+(k_j+p)\x_4+h.
					\end{align*} 
					We only prove the case when $\x=(k_i-p-1)\x_4+g$ and $\y=\w+(k_j+p)\x_4+h$ since the other cases can be shown similarly. Observe that $\w+h-g=-\x_{\ell}-\x_{4}$ holds for some $\ell$ with  $1\le \ell \le 3$. By a simple calculation, the inequality $-2\c \le\y-\x\le2\c$ is equivalent to $-2\c\le \x_{\ell}+(k_j-k_i)\x_4+\c \le 2\c$. Since $E_i \in \Dom^{+}_\triangle(E_j)$, we have $k_j\le k_i\le j+k_j$ and thus the inequality holds.
				\end{proof}
				
				\begin{lemma}\label{S is rigid} $S^{I}_{g,h}(E_i,E_j)$ is rigid.	
				\end{lemma}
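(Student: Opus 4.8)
The plan is to reduce everything, via Lemma~\ref{rigid domain among line bundles}, to a difference estimate: writing $S^{I}_{g,h}(E_i,E_j)=\bigoplus_{\x\in S_I}\OO(\x)$, rigidity is equivalent to $-2\c\le \y-\x\le 2\c$ for all $\x,\y\in S_I$, a condition symmetric in $\x,\y$. The key is to read off the combinatorial structure of $S_I$. By definition $S_I=I\cup(H-\w\setminus I-\w)=I\cup\{h-\w\mid h\in H\setminus I\}$, so $S_I$ selects, for each $h\in H$, the representative $h$ if $h\in I$ and $h-\w$ otherwise. Since the index intervals $J_1$ and $J_2$ in \eqref{H_{g,h}} are disjoint (one checks $j+k_j+\delta_j-p-1<k_j-\delta_j$, using $j+2\delta_j\le p<p+1$), no two elements of $H$ differ by $\w$, so $S_I$ meets each $\w$-orbit in $U$ at most once and the selection is well defined. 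Thus any two elements of $S_I$ have the form $h_1-s_1\w$ and $h_2-s_2\w$ with $s_t\in\{0,1\}$ and $s_t=0\iff h_t\in I$, and I must control $(h_2-h_1)-\varepsilon\w$ for $\varepsilon:=s_2-s_1\in\{-1,0,1\}$.

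For the base case $\varepsilon=0$ the difference is $h_2-h_1$, so it suffices to prove $\bigoplus_{h\in H}\OO(h)$ is rigid. Here I would use that $G=\{0,\x_1-\x_2,\x_1-\x_3,\x_2-\x_3\}$ is a Klein four-subgroup of $\L$ (each element is $2$-torsion, so differences of $G$-components again lie in $G$ and every nonzero one is some $\x_a-\x_b$). Within a single block of $H$ the differences are $m\x_4+g''$ with $g''\in G$ and $\lvert m\rvert\le p-1\le p$ (the widths $\lvert J_1\rvert,\lvert J_2\rvert\le p$ follow from $0\le k_i-k_j\le j-i$), which are rigid by Corollary~\ref{rigid domain among line bundles cor}(a),(c). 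For the cross-block differences $\w+m\x_4+g''$ one computes from $J_1,J_2$ that $m=a-b\in[1,p+1]$; the case $g''=0$ is Corollary~\ref{rigid domain among line bundles cor}(b), and for $g''=\x_a-\x_b$ the identity $\w+(\x_a-\x_b)=-\x_c-\x_4$ rewrites the difference as $-\x_c+(m-1)\x_4$, whose rigidity range $1-p\le m\le 2p+1$ contains $[1,p+1]$.

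The main obstacle is the mixed case $\varepsilon=\pm1$, and this is exactly where the $H$-upset hypothesis on $I$ is indispensable. Take $\varepsilon=1$, i.e.\ $h_1\in I$ and $h_2\in H\setminus I$; the $\varepsilon=-1$ case is symmetric because the box condition is symmetric. Since $I$ is an $H$-upset, $h_1\le h_2$ would force $h_2\in(I+\L_+)\cap H\subset I$, a contradiction, so $h_1\not\le h_2$, i.e.\ $h_2-h_1\notin\L_+$. The difference to be bounded is $(h_2-h_1)-\w$. When $h_1,h_2$ lie in the same block with the same $G$-component one has $h_2-h_1=m\x_4$, and the incomparability $h_1\not\le h_2$ gives precisely $m\le -1$; then $-\w+m\x_4$ is rigid by Corollary~\ref{rigid domain among line bundles cor}(b), whose range $-p-1\le m\le -1$ is met since $\lvert m\rvert\le p-1$. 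This is the crucial point: without the upset condition one could have $m\ge 0$, yielding the non-rigid pair $L\oplus L(-\w+m\x_4)$. In every other configuration the difference, after applying the identities $-\w+(\x_a-\x_b)=\x_c+\x_4$ and $-2\w=\c+2\x_4$, takes the shape $\x_c+(m+1)\x_4$ or $\c+(m+2)\x_4$ (up to a $G$-component, again absorbed by $2$-torsion) with the $\x_4$-coefficient forced into the admissible window by the ranges of $J_1,J_2$, hence is rigid without further input. Collecting the three cases gives $-2\c\le \y-\x\le 2\c$ for all $\x,\y\in S_I$, proving the lemma.
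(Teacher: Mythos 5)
Your overall strategy is the paper's: reduce to the box condition $-2\c\le\y-\x\le2\c$ via Lemma~\ref{rigid domain among line bundles}, split according to whether the summands come from $I$ or from $H-\w\setminus I-\w$, and invoke the $H$-upset hypothesis only in the mixed case. Your structural setup (disjointness of $J_1$ and $J_2$, the Klein four-group observation for $G$) and your treatment of the pure case $\varepsilon=0$ are correct and in fact more explicit than the paper's steps (1)--(2); your upper bound in the mixed case ($h_1\not\le h_2$, hence $h_2-h_1\le\w+2\c$ by~(\ref{two possibilities of x})) is exactly the paper's step (3).

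There is, however, a genuine gap in the mixed case. You assert that only the ``same block, same $G$-component'' configuration needs the upset hypothesis and that every other configuration is ``rigid without further input''; moreover your list of shapes omits the cross-block sub-case in which the two occurrences of $\w$ cancel. Concretely, take $i=0$, $h_1=(k_i-p-1)\x_4+q_1\in H$ (bottom of the $J_2$-block) and $h_2=\w+k_i\x_4+q_2\in H$ (top of the $J_1$-block) with $q_1\neq q_2$ in $G'$; both exist since $|G'|\ge 3$ when $i=0$. If $h_1\in I$ and $h_2\notin I$, the corresponding summands of $S^{I}_{g,h}(E_i,E_j)$ are $\OO(h_1)$ and $\OO(h_2-\w)$, and $(h_2-\w)-h_1=(p+1)\x_4+(\x_a-\x_b)$, which is \emph{not} $\le 2\c$ (Corollary~\ref{rigid domain among line bundles cor}(c) requires $|\ell|\le p$). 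So this pair is not rigid without further input. What rescues the lemma is precisely the upset condition: writing $c$ for the remaining index in $\{1,2,3\}$, one has $h_2-h_1=\w+(p+1)\x_4+(\x_a-\x_b)=\x_c\ge 0$, so $h_1\in I$ forces $h_2\in I$ and the bad configuration cannot occur. A symmetric boundary failure occurs at $j=p-2$ in the other cross-block direction. The paper sidesteps all of this by arguing both bounds abstractly: the upper bound from the upset condition via the dichotomy~(\ref{two possibilities of x}), and the lower bound from the fact that $H$ contains no roofs. You should either adopt that argument or extend your upset-based exclusion to the cross-block sub-cases at the extreme ends of $J_1$ and $J_2$.
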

				\begin{proof}
					Let $\OO(\x)$ and $ \OO(\y)$ be two indecomposable direct summands of $S^{I}_{g,h}(E_i,E_j)$. By Lemma \ref{rigid domain among line bundles}, it suffices to show that the condition $-2\c \le\y-\x\le2\c$ holds for any 
					$\x,\y \in S_I$. By definition $S_I=I\cup(H-\w\setminus I-\w)$,   the proof can be divided into the following three parts. 
					
					(1) We show that $-2\c \le\y-\x\le2\c$ holds for any $\x,\y \in H$.
					
					Fix $q_1,q_2\in G'$, it is enough to show that the condition $-2\c \le\y-\x\le2\c$ holds for the four extreme cases, corresponding to the possible combinations of
					\begin{align*} 
						\x&=\w+(k_i+i+\delta^i-p-1)\x_4+q_1 \ \ \text{or} \ \ (k_j-\delta_j)\x_4+q_1, \\
						\y&=\w+(k_j+j+\delta_j-p-1)\x_4+q_2 \ \ \text{or}\ \ (k_i-\delta^i)\x_4+q_2.
					\end{align*} 
					We only prove the case when $\x=\w+(k_i+i+\delta^i-p-1)\x_4+q_1$ and $\y=\w+(k_j+j+\delta_j-p-1)\x_4+q_2$ since the other assertions can be shown similarly.  By a simple calculation, the inequality $-2\c \le\y-\x\le2\c$ is equivalent to $$-2\c \le (k_j+j+\delta_j-k_i-i-\delta^i)\x_4+q_2-q_1 \le 2\c.$$ By the description of $E_i \in \Dom^{+}_\triangle(E_j)$, there exists an integer $0\le n_1 \le p-2$ such that $k_j+j-k_i-i=n_1$, which implies
					that $$-1\le k_j+j+\delta_j-k_i-i-\delta^i=n_1+\delta_{j,p-2}+\delta_{i,0}-1\le p-1.$$ Since $q_2-q_1\in \{0, \x_1-\x_2,\x_1-\x_3,\x_2-\x_3\}$, we have the assertion.
					
					(2) By (1), we have that $-2\c \le\y-\x\le2\c$ holds for any $\x,\y \in H-\w.$
					
					(3) 
					We show that $-2\c \le\y-\x-\w\le2\c$ holds for any $\x \in I$ and $\y \in H\setminus I$.

					Since $I$ is an $H$-upset and $\y \in H\setminus I$, we have $\y \not\ge \x$. Thus $\y-\x-\w \le 2\c$ holds by (\ref{two possibilities of x}). On the other hand, we assume $-2\c \not\le\y-\x-\w$. Again by (\ref{two possibilities of x}), we have $\x-\y \ge -2\w$. This would imply that $H$ has roofs, a contradiction.	
				\end{proof}

				Now we show that $T$ satisfies the rigid condition for being a tilting object.
				
				\begin{proposition} \label{rigid condition}
					We have $\Ext^{i}_{}(T,T)=0$ for any $i \neq 0$.
				\end{proposition}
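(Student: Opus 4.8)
The plan is to read the vanishing $\Ext^{i}(T,T)=0$ for $i\neq 0$ simply as the rigidity of $T$. Since $\coh\X$ has global dimension $2$ by Theorem~\ref{basic properties}(a) and $T$ is a bundle concentrated in a single degree, only the cases $i=1,2$ require attention. As $T$ is a direct sum, its rigidity is equivalent to the pairwise rigidity of its indecomposable direct summands; moreover each such summand is exceptional (the $E_n$ as $2$-extension bundles, the rest as line bundles), so the diagonal terms vanish automatically and only off-diagonal pairs remain. Grouping the summands into the three families of $(\ref{form of tilting bundles})$, namely $\{E_n\}_{i\le n\le j}$, the roof line bundles forming $V_{g,h}(E_i,E_j)$, and the line bundles forming $S^{I}_{g,h}(E_i,E_j)$, I am left with six types of pairs: three internal to a single family and three across two families.

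First I would dispose of the three internal cases. For a pair $E_s,E_t$ the construction imposes $E_s\in\Dom^{+}(E_t)\subseteq\Dom(E_t)$, so $E_s\oplus E_t$ is rigid by the very definition of the rigid domain. The rigidity of $V_{g,h}(E_i,E_j)$ is Lemma~\ref{T is rigid}, and the rigidity of $S^{I}_{g,h}(E_i,E_j)$ is Lemma~\ref{S is rigid}.

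Next I would treat the three cross cases, all of which reduce to showing that the relevant line bundles lie in a common rigid domain. For a roof line bundle against a summand of $S^{I}_{g,h}(E_i,E_j)$, Lemma~\ref{maximal subset} is tailored to exactly this: it identifies $U=\Gamma$ as the maximal set of off-roof line bundles $\OO(\x)$ for which $\OO(\x)\oplus\OO(\y)$ is rigid for every $\y\in\mathcal{L}_{g}(E_i)\cup\mathcal{R}_{h}(E_j)$, and $S_I\subseteq U$ gives the claim. For an $E_n$ against a summand of $S^{I}_{g,h}(E_i,E_j)$, the same lemma yields $S_I\subseteq U=\Gamma\subseteq\bigcap_{n=i,j}\Dom_{L}(E_n)$, which equals $\bigcap_{i\le n\le j}\Dom_{L}(E_n)$ by Lemma~\ref{Domain property}, so each such line bundle is rigid against every $E_n$. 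Finally, for an $E_n$ against a roof line bundle it suffices to prove $\mathcal{L}_{g}(E_i)\cup\mathcal{R}_{h}(E_j)\subseteq\bigcap_{i\le n\le j}\Dom_{L}(E_n)=\Dom_{L}(E_i)\cap\Dom_{L}(E_j)$; the left roof lies in $\Dom_{L}(E_i)$ and the right roof in $\Dom_{L}(E_j)$ by construction, so the remaining point is the two cross-containments $\mathcal{L}_{g}(E_i)\subseteq\Dom_{L}(E_j)$ and $\mathcal{R}_{h}(E_j)\subseteq\Dom_{L}(E_i)$.

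I expect these last two containments to be the only genuine content of the argument. They are verified by writing the explicit roof intervals of Step~$1$ in terms of the $\langle -1,\cdot\rangle$ and $\langle p-1,\cdot\rangle$ line-bundle positions of Setting~\ref{setting 1}, and comparing the resulting index ranges with those of $\bigcap_{n=i,j}\Dom_{L}(E_n)$ in $(\ref{cap E_i,j})$; here the inequalities $k_j\le k_i$ and $i+k_i\le j+k_j$ forced by $E_i\in\Dom^{+}_{\triangle}(E_j)$, together with $0\le i\le j\le p-2$, are exactly what make the index intervals nest correctly (the $\Dom_{\bigtriangledown}$-part of each roof sits inside the opposite bundle's $\Dom_{\bigtriangledown,L}$ by the monotonicity of Lemma~\ref{Domain property}, while the $\Dom_{\triangle}$-part occupies only a short subinterval that still fits inside the opposite bundle's $\Dom_{\triangle,L}$). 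Everything else being a direct appeal to an earlier lemma, once these interval inclusions are confirmed the six cases combine to give $\Ext^{1}(T,T)=\Ext^{2}(T,T)=0$.
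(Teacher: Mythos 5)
Your proof is correct and follows essentially the same route as the paper's: internal rigidity of the three families via the $\Dom^{+}$ hypothesis and Lemmas~\ref{T is rigid} and~\ref{S is rigid}, the cross cases involving $S_I$ via Lemma~\ref{maximal subset} together with $S_I\subseteq U$, and the reduction to $n=i,j$ via Lemma~\ref{Domain property}. The only difference is that you make explicit the cross-containments $\mathcal{L}_{g}(E_i)\subseteq\Dom_{L}(E_j)$ and $\mathcal{R}_{h}(E_j)\subseteq\Dom_{L}(E_i)$, which the paper's step~(3) uses silently; your interval check (using $k_j\le k_i$, $i+k_i\le j+k_j$ and $0\le i\le j\le p-2$) is exactly what justifies that step.
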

				\begin{proof} (1) By our assumption, $k_n \le k_{n-1} \le k_{n}+1$ holds for $ i+1\le n \le j $,  implying that  $E_{s}\in\Dom_\triangle^{+}(E_{t})$ holds for any $i\le s\le t \le j$. Thus $\bigoplus_{i\le n\le j}E_n$ is rigid. 
					
					(2) By Lemma \ref{T is rigid} and \ref{S is rigid},  $V_{g,h}(E_i,E_j)$ and $S^{I}_{g,h}(E_i,E_j)$ are rigid.
					
					(3) 		
					By Lemma \ref{Domain property}, we have $\bigcap_{i\le n \le j}\Dom_{L}(E_n)= \bigcap_{n=i,j}\Dom_{L}(E_n)$. Thus $E_n\oplus \OO(\x)$ is rigid for any  $i\le n \le j$
					and $x\in \mathcal{L}_{g}(E_i)\cup \mathcal{R}_{h}(E_j)\cup S_{I}$. 
					
					(4)  By Lemma \ref{maximal subset}, we have $\OO(\y)\oplus\OO(\z)$ is rigid for any $\y \in U$ and $\z\in \mathcal{L}_{g}(E_i)\cup \mathcal{R}_{h}(E_j)$. Since $S_{I}\subset U$, then we have $V_{g,h}(E_i,E_j) \oplus S^{I}_{g,h}(E_i,E_j)$ is rigid.
					
					Combining all of them, we have the assertion. 			
				\end{proof}

				 First we prove the following special case of Theorem \ref{main theorem}.

					\begin{proposition} \label{special case}
					Assume $i=0$ and $j=p-2$. Then $T$ is $2$-tilting  on $\X$.				
				\end{proposition}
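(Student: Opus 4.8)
The plan is to establish the three defining properties of a $2$-tilting object for $T$: rigidity, generation $\thick T=\DDD^{\bo}(\coh\X)$, and $\gl\End(T)\le 2$ (basicness being clear from the construction). Rigidity is already Proposition~\ref{rigid condition}. First I would record the summand count in this special case. Since $i=0$ gives $\mathcal{L}_{g}(E_0)=\emptyset$ and $j=p-2$ gives $\mathcal{R}_{h}(E_{p-2})=\emptyset$, the middle block $V_{g,h}(E_0,E_{p-2})$ vanishes, so $T=\big(\bigoplus_{0\le n\le p-2}E_n\big)\oplus S^{I}_{g,h}(E_0,E_{p-2})$. Here the chain contributes $p-1$ summands, while $|S_I|=|H|=|G|\,(|J_1|+|J_2|)=4(p+2)=4p+8$, so that $|T|=5p+7$, the rank of $K_0(\coh\X)$. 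This matching is what makes the generating argument below sharp.

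The heart of the proof, and the step I expect to be hardest, is generation. Since $T^{\ca}=\bigoplus_{0\le\x\le2\c}\OO(\x)$ is a tilting bundle by \cite{HIMO}, we have $\thick T^{\ca}=\DDD^{\bo}(\coh\X)$, so it suffices to place inside $\thick T$ all line bundles of one degree-shifted copy of the fundamental region $[0,2\c]$. My strategy is to bootstrap from the line bundles of $S_I$, which by construction supply exactly one representative of each $\w$-orbit occurring in $U=H\cup(H-\w)$, using the extension bundles $E_n\in\thick T$ to produce the rest. Concretely, the degree shifts of the defining four-term sequences (\ref{2-extension bundle exact}) and (\ref{2-coextension bundle exact}), used exactly as in the proof of Proposition~\ref{reback lemma}, together with the pullback squares of Proposition~\ref{ext-pullback}, relate each $E_n$ to its neighbouring line bundles and, crucially, bridge consecutive $\w$-orbits. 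With them I would first recover the $\w$-partner of every element of $S_I$, so that all of $U$ lies in $\thick T$, and then obtain the roof line bundles $\OO(\mathcal{R}_{h}(E_0))$ and $\OO(\mathcal{L}_{g}(E_{p-2}))$. Once these are available, Proposition~\ref{reback lemma} yields $\Dom^{+}_{\bigtriangledown}(E_0)\cup\Dom^{+}_{\triangle}(E_{p-2})\subset\thick T$; combined with the recovered line bundles this sweeps out a full fundamental region and forces $\thick T=\DDD^{\bo}(\coh\X)$. The genuine obstacle is the bookkeeping: organising a finite induction that reconstructs a complete generating family from a single line bundle per $\w$-orbit plus only $p-1$ extension bundles, where the sequences of the $E_n$ are the sole mechanism crossing between adjacent $\w$-orbits.

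Finally I would upgrade ``tilting'' to ``$2$-tilting'' by checking $\gl\End(T)\le 2$ through the slice characterisation of Theorem~\ref{tilting-cluster tilting}: it suffices to show that $\UU:=\add\{T(\ell\w)\mid\ell\in\Z\}$ is a $2$-cluster tilting subcategory of $\vect\X$ and that $T$ is a slice in it in the sense of Definition~\ref{def.slice}. The slice condition $\Hom(T,T(\ell\w))=0$ for $\ell>0$ I would read off by combining Lemma~\ref{Hom-domains 2} on the extension-bundle summands with the $\Hom$-formula of Theorem~\ref{basic properties}(c) on the line-bundle summands, since a strictly positive multiple of $\w$ moves every summand out of the relevant $\Hom$-support. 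The cross-orbit orthogonality $\Ext^{1}(T,T(\ell\w))=0$ required for $\UU$ to be $2$-cluster tilting comes from the already established rigidity of $T$ via Auslander--Reiten--Serre duality (Theorem~\ref{basic properties}(d)), with functorial finiteness supplied by the general results of \cite{HIMO}. Granting these, Theorem~\ref{tilting-cluster tilting} identifies $T$ as a $2$-tilting bundle, which completes the proof.
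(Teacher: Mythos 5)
There is a genuine gap, and it sits exactly at the step you lean on hardest. To invoke Theorem~\ref{tilting-cluster tilting} you must first exhibit a $2$-cluster tilting subcategory of $\vect\X$ in which $T$ is a slice; you propose to show that $\UU:=\add\{T(\ell\w)\mid\ell\in\Z\}$ is itself $2$-cluster tilting ``from the already established rigidity of $T$ via Auslander--Reiten--Serre duality, with functorial finiteness supplied by the general results of [HIMO].'' Rigidity is far from sufficient: a $2$-cluster tilting subcategory must satisfy the maximality condition $\CC=\{X\mid\Ext^1(\CC,X)=0\}=\{X\mid\Ext^1(X,\CC)=0\}$, and nothing in your sketch addresses it. The paper avoids this entirely by starting from the subcategory $\UU=\add\{E(\ell\w),\,\OO(\x)\mid\ell\in\Z,\,\x\in\L\}$, which is \emph{already known} to be $2$-cluster tilting because $E=\bigoplus_{n=0}^{p-2}E_n$ is a tilting object in $\underline{\ACM}\,\X$ with hereditary endomorphism algebra (the results of [HIMO] cited in Section~5). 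The whole proof then reduces to checking the two slice conditions: (a) $\add\{T(\ell\w)\mid\ell\in\Z\}=\UU$, which requires knowing that $S_I$ meets each $\w$-orbit of line bundles \emph{exactly once} --- your count $|S_I|=4p+8$ does not give this by itself; one needs that $U=H\cup(H-\w)$ has no roofs, so distinct elements of $S_I$ lie in distinct $\w$-orbits; and (b) $\Hom(T,T(\ell\w))=0$ for $\ell>0$, where your plan via Lemma~\ref{Hom-domains 2} and Theorem~\ref{basic properties}(c) is essentially workable. Without the correct identification in (a), Theorem~\ref{tilting-cluster tilting} cannot be applied and neither tilting nor $\gl\le 2$ follows.

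A secondary problem is that your architecture is both redundant and logically inverted relative to the paper. Once $T$ is shown to be a slice, Theorem~\ref{tilting-cluster tilting} delivers rigidity, generation and $\gl\End(T)\le 2$ all at once, so your Steps~1--3 are not needed; more seriously, the generation statement for general $T$ (Proposition~\ref{gen. condition}) is \emph{deduced in the paper from this special case}, by extending a general $T$ to a $W$ of the $i=0$, $j=p-2$ form and citing the present proposition. Your plan to prove $\thick T=\DDD^{\bo}(\coh\X)$ directly by bootstrapping from one line bundle per $\w$-orbit therefore takes on the hardest part of the whole section without the tools the paper uses for it, and your sketch stops precisely where the difficulty begins: you do not explain how the four-term sequences (\ref{2-extension bundle exact}) and (\ref{2-coextension bundle exact}) recover the $\w$-partner of an element of $S_I$, since each such sequence involves line bundles from several other $\w$-orbits whose membership in $\thick T$ is equally unestablished, and it is not clear that this induction closes up.
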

				\begin{proof} In this case,  $\{O(\x)\mid \x\in U\}=\bigcap_{n=0,p-2}\Dom_{L}(E_n)$ and  $T=E \oplus P$, where $E:=\bigoplus_{n=0}^{p-2}E_n$ and $P=S^{I}_{g,h}(E_0,E_{p-2}):=\bigoplus_{\x \in S_I}\OO(\x)$. Thanks to Theorem \ref{tilting-cluster tilting}, it suffices to show that $T$ is a slice in the $2$-cluster tilting subcategory
					$$\UU:=\add \{E(\ell \w), \,\OO(\x) \mid  \ell\in \Z,\, \x\in\L \}$$
					of $\vect \X$. Since $U$ has no roofs,  each $\w$-orbit of line bundles contains exactly two elements in $U$: One belongs to $H$, and the other belongs to $H-\w$. Then  $P$ meets each $\w$-orbit of line bundles exactly once, which implies that  $\UU=\add\{T(\ell \w)\mid \ell\in \Z\}$. Hence the condition (a) in Definition \ref{def.slice} is satisfied. 	Next we need to check condition (b) there, that is, $\Hom(T,T(\ell\w))=0$ for any $\ell >0$. 
					
					(1) We show that $\Hom_{}(\OO(\x),\OO(\y+\ell \w))=0$ for any  $\x, \y \in S_I$ and $\ell >0$.
					
					By (\ref{extension spaces between line bundles}), we have $\Hom_{}(\OO(\x),\OO(\y+\ell \w))=DR_{\y-\x+\ell \w}$. By (\ref{two possibilities of x}), this is zero if and only if $\y-\x+\ell \w \le 2\c+\w$ holds. Since $2\w<0$, it is enough to show that the inequality holds for $\ell=1,2$. By Lemma \ref{S is rigid}, the rigidity of $P$ implies that $\y-\x\le 2\c$. On the other hand, we have $\y-\x \not\ge -2\w$ since $S_I \subset U$. Therefore  $\y-\x+\w \le 2\c$ holds by (\ref{two possibilities of x}). 
					
					(2) We show that $\Hom_{}(E,\OO(\x+\ell\w))=0$ holds for any $\x\in S_I$ and $\ell >0$. 						 
					
					By Lemma \ref{Hom-domains 2}(a), it is easy to check that $\Hom_{}(E,\OO(\x))=0$ holds for any $\x \in H$. Since $S_I \subset  H \cup H-\w$, the assertion follows from Lemma \ref{Hom-domains}(a).

					(3) We show that $\Hom_{}(\OO(\x),E(\ell\w))=0$ holds for any $\x\in S_I$ and $\ell >0$.
					
					By Lemma \ref{Hom-domains 2}(b), it is easy to check that  $\Hom_{}(\OO(x),E)=0$ holds for any $\x \in H-\w$. Since $S_I \subset  H \cup H-\w$, the assertion follows from Lemma \ref{Hom-domains}(b).

					(4) We show that $\Hom_{}(E,E(\ell\w))=0$ holds for any $\ell >0$.  
					
					Since $E$ is a $2$-tilting object in $\underline{\ACM}\, \X$,  $\underline{\Hom}(E,E(\ell\w))=0$ holds for any $\ell>0$ by \cite[Proposition~2.3(b)]{HIO} and \cite[Proposition 4.28]{HIMO}. Note that for any $i\in\Z$ and  $\ell>0$, any composition $E \xrightarrow{f} \OO(\x+i\w) \xrightarrow{g} E(\ell\w)$ is zero. In fact, if $i>0$, then $f=0$ holds by (2), and if $i \le 0$, then $g=0$ holds by (3).
					Thus any morphism $E\to E(\ell\w)$ factors through $\lb \X$, and hence must be zero.
				\end{proof}
				
				Now we show that $T$ satisfies the remaining condition for being a tilting object.
				
				\begin{proposition} \label{gen. condition} We have $\thick T=\DDD^{\bo}(\coh\X)$.
				\end{proposition}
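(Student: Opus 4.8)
The plan is to deduce the general case from the already-established special case $i=0$, $j=p-2$ (Proposition \ref{special case}). Since $\thick T \subseteq \DDD^{\bo}(\coh\X)$ always holds, it suffices to exhibit a single object $T'$ of the form (\ref{form of tilting bundles}) with $i'=0$ and $j'=p-2$ such that $\add T' \subseteq \thick T$; then Proposition \ref{special case} gives $\DDD^{\bo}(\coh\X) = \thick T' \subseteq \thick T$, forcing equality. In this way the whole problem reduces to showing that $\thick T$ contains a full special-form tilting bundle, that is, one rank-four indecomposable ACM bundle from each $\tau$-orbit $0,\dots,p-2$ together with one line bundle from each of the $4p+8$ $\w$-orbits, arranged so as to form a slice.

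First I would produce the $2$-extension bundle part of $T'$. By Proposition \ref{reback lemma}(a), the summand $E_i$ of $T$ and the line bundles $\OO(\mathcal{L}_{g}(E_i)) \subseteq V_{g,h}(E_i,E_j)$ yield $\Dom^{+}_{\triangle}(E_i) \subseteq \thick T$, which supplies representatives $\langle n, \ast\rangle$ of every orbit $n \in [0,i]$ (when $i=0$ this is read off instead from the defining sequence (\ref{2-extension bundle exact}) of the $2$-Auslander bundle $E_0$); dually, Proposition \ref{reback lemma}(b) applied to $E_j$ and $\OO(\mathcal{R}_{h}(E_j))$ gives $\Dom^{+}_{\bigtriangledown}(E_j) \subseteq \thick T$, supplying every orbit $n \in [j,p-2]$. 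The middle orbits $n \in [i,j]$ are covered by the summands $E_n$ of $T$ themselves. Choosing $k'_n = k_n$ for $i \le n \le j$ and extending the sequence $(k'_n)$ downward and upward within the shift ranges made available by $\Dom^{+}_{\triangle}(E_i)$ and $\Dom^{+}_{\bigtriangledown}(E_j)$, one obtains $E'_0,\dots,E'_{p-2} \in \thick T$ satisfying $k'_n \le k'_{n-1} \le k'_n+1$, i.e. a legitimate chain of rank-four summands for $T'$.

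It remains to place one line bundle from each $\w$-orbit into $\thick T$ and to check that a valid $S^{I'}_{g',h'}(E'_0,E'_{p-2})$ can be assembled from them; this is the technical heart of the argument. The line bundles already available are those in $V_{g,h}(E_i,E_j)$ and $S^{I}_{g,h}(E_i,E_j)$ (direct summands of $T$), together with $\Dom_{\triangle,L}(E_i)$ and $\Dom_{\bigtriangledown,L}(E_j)$, which lie in $\thick T$ by the previous step since they are exactly the rank-one objects of $\Dom^{+}_{\triangle}(E_i)$ and $\Dom^{+}_{\bigtriangledown}(E_j)$. The main work is to verify, by comparing the degree ranges $I_1=[i+k_i-p+1,k_i+p]$ and $I_2=[k_j-p-1,j+k_j]$ (together with the translates by $g'\in G$ and by $\w$), that these line bundles meet every one of the $4p+8$ $\w$-orbits and that one representative per orbit can be selected so as to satisfy the $H'$-upset pattern required for $S^{I'}$. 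This amounts to the same kind of explicit interval bookkeeping already performed in Lemma \ref{maximal subset} and Proposition \ref{Num rigid lemma}, and I expect the boundary situations $i=0$ or $j=p-2$ (where $\mathcal{L}_{g}(E_i)$ or $\mathcal{R}_{h}(E_j)$ is empty, so that one must instead invoke the defining sequences (\ref{2-extension bundle exact})–(\ref{2-coextension bundle exact})) to be the most delicate point. Once such a $T'$ has been exhibited inside $\thick T$, Proposition \ref{special case} closes the argument.
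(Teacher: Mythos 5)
Your proposal follows essentially the same route as the paper: extend the rank-four chain to all of $[0,p-2]$ via Proposition \ref{reback lemma}, assemble inside $\thick T$ a full tilting bundle of the form (\ref{form of tilting bundles}) with $i=0$ and $j=p-2$, and conclude by Proposition \ref{special case}. The paper carries out the line-bundle selection you deferred by folding the doubled endpoints of the roofs $\mathcal{L}_g(E_i)$ and $\mathcal{R}_h(E_j)$ onto their $\w$-translates (these translates, not all of $\Dom_{\triangle,L}(E_i)$ as you state, are the rank-one objects of $\Dom^{+}_{\triangle}(E_i)$ and $\Dom^{+}_{\bigtriangledown}(E_j)$ supplied by Proposition \ref{reback lemma}), so that $\mathcal{L}'_g(E_i)\cup\mathcal{R}'_h(E_j)\cup S_I$ becomes a complete set of representatives of $\L/\Z\w$ and the counting argument from Theorem \ref{main theorem} applies.
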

				\begin{proof}  We extend the direct summand $\bigoplus_{i\le k \le j}E_k$ of $T$ to $E:=\bigoplus_{k=0}^{p-2} E_k$, where $E_s:=\langle s, k_i\rangle$ and $E_t:=\langle t, k_j\rangle$ for  $0\le s \le i$ and $j\le t \le p-2$.
				
				As before,	let $l_m:=(k_i+m-p-2)\x_4$ and $r_{n}:=\w+(i+k_i+n-p)\x_4$ for $1\le m \le i$ and   $1\le n \le p-2-j$. 
					 We define $\mathcal{L}'_g(E_i)$ (resp. $\mathcal{R}'_h(E_j)$) as the set obtained from $\mathcal{L}_g(E_i)$ (resp. $\mathcal{R}_h(E_j)$) by replacing $$\bigcup_{m=1}^{i}\{l_m+g, l_m+g-2\w\} \  \big(\text{resp. } \bigcup_{n=1}^{p-2-j}\{r_n+h, r_n+h-2\w\}\big)$$ with  $$\{l_m(g-\w) \mid 1\le m \le i\} \ \ (\text{resp. } \{r_n(h-\w) \mid 1\le n \le p-2-j\}).$$
					 In this case, $P:=\mathcal{L}'_g(E_i)\cup \mathcal{R}'_h(E_j)\cup S_I$ forms a complete set of representation of $\L/\Z\w$ in $\L$. 
					 Put $W:=E\oplus (\bigoplus_{\x\in P}\OO(\x))$.  Clearly $\OO(\x)\in \Dom(E)$ for any $\x \in P$. 
					 Invoking Proposition \ref{rigid condition},  one can check that $W$ is a rigid object with $|W|=5p+7$. By the argument in the proof of Theorem \ref{main theorem} `$\Rightarrow$', $W$ has the form of (\ref{form of tilting bundles}), and further $W$ is tilting on $\X$ by Proposition \ref{special case}. Moreover, by Proposition \ref{reback lemma}, $W\in \thick T$ and hence we have $\thick W=\DDD^{\bo}(\coh\X)=\thick T$.	
				\end{proof}

				Now we are ready to complete the proof of Theorem \ref{main theorem}.
				
				\begin{proof}[Proof of Theorem~\rm\ref{main theorem} `$\Leftarrow$']
					Let $V$ be an ACM bundle of the form (\ref{form of tilting bundles}). It follows from Propositions \ref{rigid condition} and \ref{gen. condition} that $V$ is a tilting bundle on $\X$.
				\end{proof}	
				
			 	When $i=0$ and $j=p-2$, the direct summand $V_{g,h}(E_i,E_j)$ of $T$ is zero and thus
			 	 the trichotomy of $T$ reduces to two parts. We discuss the other case below.
				
				\begin{proposition} Let $T$ be an ACM bundle of the form (\ref{form of tilting bundles}). Then the direct summand  $S^{I}_{g,h}(E_i,E_j)$ of $T$ is zero if and only if  $i=j$ with $1\le j \le p-3$, that is,	there exists a $2$-extension bundle $E$ but not a $2$-Auslander bundle such that 
					\begin{align*}
						T=E\oplus \big(\bigoplus_{{\x \in \mathcal{L}_{g}(E)\cup R_{h}(E)}} \OO(\x)\big).
					\end{align*}
				\end{proposition}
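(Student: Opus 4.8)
The plan is to reduce the entire statement to the emptiness of the index sets $J_1$ and $J_2$ that define $H$. First I would note that, by the defining formula $S_I = I\cup\big((H-\w)\setminus(I-\w)\big)$ with $I\subseteq H$ an $H$-upset, the bundle $S^{I}_{g,h}(E_i,E_j)=\bigoplus_{\x\in S_I}\OO(\x)$ is zero exactly when $S_I=\emptyset$. Since $S_I\supseteq (H-\w)\setminus(I-\w)$ and the choice $I=\emptyset$ gives $S_\emptyset=H-\w$, emptiness of $S_I$ is equivalent to $H=\emptyset$. So the proposition becomes the assertion that $H=H_{g,h}(E_i,E_j)=\emptyset$ if and only if $i=j$ with $1\le j\le p-3$.

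Next, because $H$ is the union over $q\in G'$ of the two families $\{\w+a\x_4+q\mid a\in J_1\}$ and $\{b\x_4+q\mid b\in J_2\}$, and $G'$ is nonempty in every case (its cardinality is $4,3,3,2$), I get $H=\emptyset$ if and only if both $J_1=\emptyset$ and $J_2=\emptyset$. Reading off endpoints, $J_1=[k_j-\delta_j,\,k_i-\delta^i]$ is empty precisely when $k_i-k_j<\delta^i-\delta_j$, and $J_2=[i+k_i+\delta^i-p-1,\,j+k_j+\delta_j-p-1]$ is empty precisely when $(j-i)-(k_i-k_j)<\delta^i-\delta_j$.

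I would then feed in the constraint $E_i\in\Dom^{+}(E_j)$, which by Lemma \ref{Domain property} is $E_i\in\Dom^{+}_{\triangle}(E_j)$, giving $k_j\le k_i$ and $i+k_i\le j+k_j$, i.e. $0\le d\le j-i$ for $d:=k_i-k_j$. The two emptiness conditions read $d<\delta^i-\delta_j$ and $(j-i)-d<\delta^i-\delta_j$. Computing $\delta^i-\delta_j$ in the four regimes defining $G'$ yields $-1,0,0,1$ respectively. Since $d\ge 0$, the condition $d<\delta^i-\delta_j$ already fails whenever $\delta^i-\delta_j\le 0$, so $J_1\neq\emptyset$ and hence $H\neq\emptyset$ in the first three cases. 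Only the fourth case ($i\neq 0$, $j\neq p-2$, $\delta^i-\delta_j=1$) survives, where the two conditions become $d<1$ and $(j-i)-d<1$; as these are integers this forces $d=0$ and $j-i=0$, i.e. $i=j$. Together with the constraints of the fourth case this gives $H=\emptyset$ exactly when $i=j$ with $i\neq 0$ and $j\neq p-2$, that is $1\le i=j\le p-3$.

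Finally, for the displayed form: when $i=j\in[1,p-3]$ the index range $[i,j]$ is the singleton $\{i\}$, so $\bigoplus_{i\le n\le j}E_n$ is a single bundle $E:=\langle i,k_i\rangle$ with $i\ge 1$, hence a $2$-extension bundle that is not a $2$-Auslander bundle (those being the $\langle 0,\cdot\rangle$); moreover $V_{g,h}(E_i,E_j)=\bigoplus_{\x\in\mathcal{L}_g(E)\cup\mathcal{R}_h(E)}\OO(\x)$ and $S^{I}_{g,h}(E_i,E_j)=0$, giving the asserted expression for $T$. I expect the only delicate point to be the bookkeeping with the Kronecker corrections $\delta^i,\delta_j$ and the handling of empty intervals; phrasing the emptiness of $J_1,J_2$ as two linear inequalities in the single quantity $d$ and then adding them is what collapses the dependence on all the $k_n$ and makes the four-case check short, avoiding any cardinality computation.
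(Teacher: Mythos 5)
Your proof is correct and follows essentially the same route as the paper's: reduce vanishing of $S^{I}_{g,h}(E_i,E_j)$ to $H=\emptyset$, translate that into the two interval-emptiness inequalities, and combine them with $0\le k_i-k_j\le j-i$ (coming from $E_i\in\Dom^{+}(E_j)$) and the values of $\delta^i,\delta_j$ to force $i=j$ with $i\neq 0$ and $j\neq p-2$. Your reformulation in terms of the single quantity $d=k_i-k_j$ is just a repackaging of the paper's parameters $m,n\ge 0$ with $i+m+n=j$ and $k_i-m=k_j$, so no substantive difference.
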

				\begin{proof} Clearly $S^{I}_{g,h}(E_i,E_j)=\{0\}$ if and only if $H^{I}_{g,h}(E_i,E_j)=\emptyset$. By (\ref{H_{g,h}}), this is equivalent to 
					\begin{align} \label{11}
						k_i-\delta^{i}\le k_j-\delta_{j}-1 \ \text{ and }\ j+k_j+\delta_{j} \le i+k_i+\delta^{i}-1. 
					\end{align}
					By (\ref{Dom 2}), there exists integers $m,n\ge 0$ such that $i+m+n=j$ and $k_i-m=k_j$. 
					By a similar calculation, we obtain
					  $	\delta_{j}-\delta^{i}+1 \le m$ and $n\le -(\delta_{j}-\delta^{i}+1)$. Since $n\ge 0$, this implies that $\delta^{i}=1$, $\delta_{j}=0$ and $n=0$, that is, $i\neq 0$ and $j\neq p-2$. Moreover, we have $k_i\le k_j$ from (\ref{11}). Since $k_i-m=k_j$, we have $m\le 0$ and thus $m=0$. Therefore we have $i=j$ with $1\le j \le p-3$.
				\end{proof}

				\section{A classification theorem} \label{sec:A classification theorem}

				Let $U$ be a tilting object in $\underline{\ACM} \, \X$ such that $\underline{\End} (U)$ is hereditary, that is, 
							$U= \bigoplus_{n=0}^{p-2}E_n,$			
				where $E_n:=\langle n,k_n\rangle$ with the sequence $(k_n)$ satisfying $$k_n \le k_{n-1} \le k_{n}+1.$$				
				By Proposition \ref{special case}, $U$ can be completed to a $2$-tilting bundle $T$ on $\X$ of form (\ref{form of tilting bundles}) with $i=0$ and $j=p-2$.   It further follows from \cite[Theorems 7.15]{HIMO}
				    that $\vect \X$ has the $2$-cluster tilting subcategory  $$\mathcal{U}:=\add\{ T(\ell\w) \mid \ell\in \Z\} =\add\{ U(\ell\w),\,\OO(\x) \mid \ell\in \Z,\,\x\in\L \}.$$ 
				    		
				The aim of this section is to classify all ACM tilting bundles on $\X$ contained in the $2$-cluster tilting subcategory $\mathcal{U}$. We start with giving the follow observation.  
				  
				 	\begin{lemma}\label{at most one member}Let $E$ be an indecomposable ACM bundle of rank four. Then each tilting bundle on $\X$ contains at most one direct summand of the form $E(i\w)$, $i \in \Z$.
				 \end{lemma}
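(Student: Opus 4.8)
The plan is to reduce the statement to a two-summand rigidity assertion and then contradict it using the Auslander--Reiten--Serre duality of Theorem~\ref{basic properties}(d) together with the monotonicity in Lemma~\ref{Hom-domains}. First I would observe that it suffices to prove the following: for a rank-four indecomposable ACM bundle $E$ and any integer $m\neq 0$, the object $E\oplus E(m\w)$ is \emph{not} rigid. Indeed, suppose a tilting bundle $V$ had two distinct indecomposable direct summands $E(a\w)$ and $E(b\w)$ with $a\neq b$. Applying the twist autoequivalence $(-a\w)$ of $\coh\X$ produces another tilting bundle $V(-a\w)$ whose summands include $E(a\w)(-a\w)=E$ and $E(b\w)(-a\w)=E((b-a)\w)$. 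Since every tilting object is rigid, $E\oplus E((b-a)\w)$ would then be rigid with $m:=b-a\neq 0$, so proving the displayed non-rigidity yields the lemma.

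Next I would compute the relevant degree-$2$ extension groups via Serre duality. Writing $\Ext^2(E,E(m\w))=\Hom_{\DDD^{\bo}(\coh\X)}(E,E(m\w)[2])$ and using $\Hom(X,Y)\simeq D\Hom(Y,X(\w)[2])$ with $X=E$, $Y=E(m\w)[2]$, I get
\[
\Ext^2(E,E(m\w))\simeq D\Hom(E(m\w)[2],E(\w)[2])\simeq D\Hom(E(m\w),E(\w))\simeq D\Hom(E((m-1)\w),E),
\]
where the last step twists by $(-\w)$. Symmetrically, taking $X=E(m\w)$ and $Y=E[2]$ gives
\[
\Ext^2(E(m\w),E)\simeq D\Hom(E,E((m+1)\w)).
\]
These two isomorphisms are the heart of the argument; I expect the only delicate point to be keeping the $\w$-twists and shifts straight in these Serre-duality bookkeeping steps.

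Finally I would produce the nonvanishing of the right-hand Hom-spaces by iterating Lemma~\ref{Hom-domains}. Starting from $\Hom(E,E)\neq 0$ (which holds since $\mathrm{id}_E\neq 0$), repeated application of Lemma~\ref{Hom-domains}(b), i.e. $\Hom(X,E)\neq 0\Rightarrow\Hom(X(\w),E)\neq 0$, shows $\Hom(E(n\w),E)\neq 0$ for all $n\geq 0$; likewise Lemma~\ref{Hom-domains}(a), i.e. $\Hom(E,X)\neq 0\Rightarrow\Hom(E,X(-\w))\neq 0$, shows $\Hom(E,E(n\w))\neq 0$ for all $n\leq 0$. Now if $m\geq 1$ then $m-1\geq 0$, so $\Hom(E((m-1)\w),E)\neq 0$ and hence $\Ext^2(E,E(m\w))\neq 0$; if $m\leq -1$ then $m+1\leq 0$, so $\Hom(E,E((m+1)\w))\neq 0$ and hence $\Ext^2(E(m\w),E)\neq 0$. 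In either case $E\oplus E(m\w)$ fails to be rigid for $m\neq 0$, which is the contradiction we sought and completes the proof. (As a sanity check consistent with the slice picture of Theorem~\ref{tilting-cluster tilting}, this recovers that a tilting bundle meets each $\w$-orbit at most once, the two members of an orbit obstructing rigidity in complementary degrees according to the sign of $m$.)
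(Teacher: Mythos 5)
Your proof is correct and follows essentially the same route as the paper: assume two summands $E(a\w)$, $E(b\w)$ occur, use Auslander--Reiten--Serre duality to rewrite an $\Ext^2$ between them as the dual of a Hom-space of the form $\Hom(E(n\w),E)$ or $\Hom(E,E(n\w))$ with the twist pointing the right way, and then get nonvanishing by iterating Lemma~\ref{Hom-domains} starting from $\mathrm{id}_E\neq 0$. The only cosmetic difference is that the paper orders $n_1<n_2$ and checks a single $\Ext^2$ group, whereas you normalize one summand to $E$ and split into the two sign cases of $m$.
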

				 
				 \begin{proof} Assume  for contradiction that there exists a tilting bundle
				 	on $\X$ containing the direct summands  $E(n_1\w)$ and $E(n_2\w)$, where $n_1<n_2$. 
				 	By using Auslander-Reiten-Serre duality, we have
				 	$$	\Ext^2(E(n_1\w), E(n_2\w))\simeq D\Hom(E, E((n_1-n_2+1)\w)),$$	
				 	which is nonzero by Lemma \ref{Hom-domains}, a contradiction.
				 \end{proof} 
				  			
				Let $V\in \mathcal{U}$ be a tilting bundle  such that there are two rank-four indecomposable direct summands $E$ and $F$  satisfy $E\in \Dom^{-}(F)$. 
				Take a decomposition $$V=W\oplus P,$$ where $P$ is a maximal projective direct summand of $V$.  				
				Since $\mathcal{U}$ is closed under degree shift by $\Z\w$, without loss of generality,   we can
				assume that  $$W= M \oplus  N$$ for some $M  \in \add U$ and $N  \in \add U(\w)$.
				
				From these direct summands $\{E_n\mid 0\le n \le p-2\}$ of $U$, we choose a minimal index $i$ (resp. $s$) and a maximal $j$ (resp. $t$) such that $E_i$ and $E_j$ (resp. $E_s(\w)$ and $E_t(\w)$) belong to $\add M$ (resp. $\add N$). Observe that $\langle a,b \rangle(\w)=\langle p-2-a, a+b-p \rangle$ holds by Proposition \ref{L-action on 2-extension bundles}. Combining Lemma \ref{Domain property} and (\ref{cap E_i,j}), we have
				\begin{align*}
					D:=\Dom_L(W)&=\bigcap_{n=i,j}\Dom_L(E_n) \cap \bigcap_{n=s,t}\Dom_L(E_n(\w)) \\
					&=\bigcup_{n_1\in I_1}\langle -1,n_1 \rangle \cup \bigcup_{n_2\in I_2} \langle p-1,n_2+1 \rangle, \nonumber
				\end{align*} 
				where 
				\begin{align*}
					I_1:=[j+k_j-p+1,s+k_s]\ \text{ and }\ I_2:=[k_i-p-1,k_t-2].
				\end{align*}

				Immediately we have the following observations.

				\begin{lemma} \label{lemma do}
					We have
					\begin{align*}
						\{k_i+1, k_t-2\} \subset I_1 \ \text{ and }\ \{j+k_j-p+1, s+k_s-p-2 \} \subset I_2.
					\end{align*}
				\end{lemma}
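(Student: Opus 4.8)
The plan is to substitute the explicit endpoints $I_1=[j+k_j-p+1,\,s+k_s]$ and $I_2=[k_i-p-1,\,k_t-2]$ and rewrite the two desired inclusions as scalar endpoint inequalities. A useful first observation is that these collapse onto only two nontrivial conditions: $k_i+1\le s+k_s$ serves at once as the upper bound for $k_i+1\in I_1$ and the lower bound for $s+k_s-p-2\in I_2$, while $j+k_j-p+1\le k_t-2$ (that is, $k_t\ge j+k_j-p+3$) serves as the lower bound for $k_t-2\in I_1$ and the upper bound for $j+k_j-p+1\in I_2$. The four remaining endpoint conditions, namely $j+k_j-p+1\le k_i+1$, $k_t-2\le s+k_s$, $k_i-p-1\le j+k_j-p+1$ and $s+k_s-p-2\le k_t-2$, I expect to be routine consequences of the standing hypothesis $k_n\le k_{n-1}\le k_n+1$ together with $0\le i,s$ and $j,t\le p-2$: this hypothesis makes $(k_n)$ non-increasing and $n+k_n$ non-decreasing and gives $k_a-k_b\le b-a$ for $a\le b$, so for instance $j+k_j\le (p-2)+k_i\le p+k_i$, $k_t\le k_s\le s+k_s$, $k_i\le k_j+(j-i)\le j+k_j+2$ and $s+k_s\le t+k_t\le (p-2)+k_t$ dispatch the four in one line each.

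For the two essential inequalities I would invoke the rigidity of $W=M\oplus N$. Since $E_i\in\add M$ and $E_s(\w),E_t(\w)\in\add N$ are direct summands of the tilting bundle $V$, the pairs $E_i\oplus E_s(\w)$ and $E_j\oplus E_t(\w)$ are rigid, whence $\Ext^2(E_i,E_s(\w))=0$ and $\Ext^2(E_j,E_t(\w))=0$. Applying Auslander-Reiten-Serre duality (Theorem \ref{basic properties}(d)) and cancelling the common shift by $\w$ converts these into the honest vanishings $\Hom(E_s,E_i)=0$ and $\Hom(E_t,E_j)=0$. Lemma \ref{Hom-domains 2}(a) now applies to these rank-four bundles: $\Hom(E_s,E_i)=0$ forces $k_s>k_i$ or $s+k_s>i+k_i$, and in either alternative (using $s\ge0$, respectively $i\ge0$) one obtains $s+k_s\ge k_i+1$; likewise $\Hom(E_t,E_j)=0$ forces $k_t>k_j$ or $t+k_t>j+k_j$, and in either alternative (using $j\le p-2$, respectively $t\le p-2$) one obtains $k_t\ge j+k_j-p+3$. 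Combining these with the four routine inequalities yields the lemma.

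The step I expect to be the crux is this last one: one must identify exactly the cross pairs $(E_i,E_s(\w))$ and $(E_j,E_t(\w))$ and route their rigidity through $\Ext^2$, because Serre duality then delivers a genuine $\Hom$-vanishing to which Lemma \ref{Hom-domains 2} can be applied and which splits into the clean disjunction used above; the $\Ext^1$ (equivalently stable-Hom) formulation would only give the weaker vanishing of an $\underline{\Hom}$ and would not produce these inequalities directly. I also note that the standing hypothesis $E\in\Dom^{-}(F)$ of the section need not be invoked for this particular lemma: the argument uses only that $W$ is rigid together with the already-established description of $D=\Dom_L(W)$, so the conclusion holds for any such $W$ written as $M\oplus N$ with $M\in\add U$ and $N\in\add U(\w)$.
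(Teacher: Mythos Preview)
Your proof is correct and follows the same overall strategy as the paper: both dispatch the four ``routine'' endpoint inequalities from the standing hypothesis on $(k_n)$ and then extract the two essential inequalities $k_i+1\le s+k_s$ and $j+k_j-p+1\le k_t-2$ from the rigidity of the cross pairs $E_i\oplus E_s(\w)$ and $E_j\oplus E_t(\w)$. The execution differs: the paper rewrites $E_s(\w)=\langle p-2-s,\,s+k_s-p\rangle$ and $E_i[-1]=\langle p-2-i,\,i+k_i-p+1\rangle$ and asserts that rigidity forces $E_s(\w)\in\Dom^{+}(E_i[-1])$, then splits into the cases $s\le i$ and $s>i$ to read off the inequalities from the explicit description of $\Dom^{+}_{\triangle}$ and $\Dom^{+}_{\bigtriangledown}$; you instead isolate the single vanishing $\Ext^2(E_i,E_s(\w))=0$, apply Serre duality to obtain $\Hom(E_s,E_i)=0$, and feed this into Lemma~\ref{Hom-domains 2}. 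Your route is a bit more transparent, since it bypasses the step of locating $E_s(\w)$ in the correct component of $\Dom(E_i)$ (which the paper leaves implicit) and uses only the $\Ext^2$-part of rigidity.
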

				\begin{proof}
						We only show the first inclusion, as the second one can be shown similarly. Since $k_j\le k_i$ and $j\le p-2$, we have $j+k_j-p+1\le k_i+1$. By Proposition \ref{L-action on 2-extension bundles}, 
					$$E_s(\w)=\langle p-2-s, s+k_s-p \rangle \ \text{ and }\ E_i[-1]=\langle p-2-i, i+k_i-p+1 \rangle. $$
					The rigid property of the direct summands $E_i\oplus E_s(\w)$ of $U$ implies that $E_s(\w)\in \Dom^{+}(E_i[-1])$. Then we have $k_i-1\le k_s-2$ if $s\le i$, and $i+k_i-p+1 \le s+k_s-p$ otherwise. In both cases, we have $k_i+1 \le s+k_s$. Thus $k_i+1 \in I_1$. 	
					By a similar argument as above, one can check that $k_t-2 \in I_1$. 
				\end{proof}
				
				\begin{lemma} \label{D has no roofs}
					$D$ has no roofs, that is,  $|I_1|\le p+2$ and $|I_2|\le p+2$.
					
				\end{lemma}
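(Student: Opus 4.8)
The plan is to turn ``$D$ has no roofs'' into two length bounds and read them off from the rigidity of $W$. A roof occupies $p+3$ consecutive lattice points along $\x_4$, and $D$ meets the families $\langle -1,\cdot\rangle$ and $\langle p-1,\cdot\rangle$ in the intervals $I_1$ and $I_2$, so a roof lies in $D$ exactly when one of these intervals has length at least $p+3$. From $I_1=[j+k_j-p+1,\,s+k_s]$ and $I_2=[k_i-p-1,\,k_t-2]$ one computes $|I_1|=s+k_s-j-k_j+p$ and $|I_2|=k_t-k_i+p$, so the claim is equivalent to
\begin{align*}
 s+k_s\le j+k_j+2 \qquad\text{and}\qquad k_t\le k_i+2 .
\end{align*}
Since $V=W\oplus P$ is a tilting bundle it is rigid, hence so is $W$; in particular the two ``extreme'' cross pairs $E_j\oplus E_s(\w)$ and $E_i\oplus E_t(\w)$ are rigid, and I would deduce the first (resp.\ second) inequality from the first (resp.\ second) of these pairs.

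The non-crossing situations are immediate from the shape of $U$. Because $M\in\add U$ and $N\in\add U(\w)$ come from the single object $U=\bigoplus_{n=0}^{p-2}E_n$ whose exponents satisfy $k_n\le k_{n-1}\le k_n+1$, the sequence $(k_n)_n$ is weakly decreasing while $(n+k_n)_n$ is weakly increasing. Thus for $I_2$, if $t\ge i$ then $k_t\le k_i$ and $k_t\le k_i+2$ holds trivially; dually, for $I_1$, if $s\le j$ then $s+k_s\le j+k_j$. So only the crossing cases $t<i$ (for $I_2$) and $s>j$ (for $I_1$) require work, and there monotonicity gives merely $k_t\le k_i+(i-t)$, which is too weak.

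The heart of the proof is the crossing case, which I expect to be the main obstacle. Take $t<i$ with $E_i\oplus E_t(\w)$ rigid, and rewrite $E_t(\w)=\langle p-2-t,\,t+k_t-p\rangle$ and $E_t(2\w)=\langle t,\,k_t-p-2\rangle$ by Proposition~\ref{L-action on 2-extension bundles}. Auslander--Reiten--Serre duality (Theorem~\ref{basic properties}(d)) together with Lemma~\ref{Hom-domains 2} then shows that the two degree-two groups vanish for free in this range: they reduce to $D\Hom(E_t,E_i)$ and $D\Hom(E_i,E_t(2\w))$, both zero once $k_i<k_t$ and $k_t-k_i\le i-t\le p-2$. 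Hence all the content of rigidity sits in the degree-one groups $\Ext^1(E_i,E_t(\w))$ and $\Ext^1(E_t(\w),E_i)$, which by (\ref{Rigid-domains equ.}) are the stable Homs $\underline{\Hom}(E_i[-1],E_t(\w))$ and $\underline{\Hom}(E_t(\w)[-1],E_i)$. Exactly as in Lemma~\ref{lemma do}, I would repackage their vanishing as the membership of $E_t(\w)$ in a shifted positive domain (here $\Dom^{+}(E_i[1])$) and then read the inequality $k_t\le k_i+2$ directly off the explicit descriptions (\ref{Dom 3})--(\ref{Dom 2}); when a closed evaluation of the stable Homs is needed, the equivalence $\underline{\ACM}\,\X\simeq\DDD^{\bo}(\mod\mathbf{k}\vec{\mathbb A}_{p-1})$ coming from $T_{\rm cub}$ (under which $(\x_4)=\tau^{-1}$ and $[1]=(\x_\ell)$) turns each $\langle n,k\rangle$ and its $\w$-shift into an explicit interval complex, reducing the question to a position count in $\Z\vec{\mathbb A}_{p-1}$ and showing that $i-t\ge 3$ forces one of the two stable Homs to be nonzero, contradicting rigidity. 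The inequality $s+k_s\le j+k_j+2$ follows dually from $E_j\oplus E_s(\w)$ in the case $s>j$, with $E_j[1]$ in place of $E_i[-1]$. The genuinely delicate points are this crossing-case extraction and the bookkeeping of the $\max/\min$ that fixes the stated endpoints of $I_1$ and $I_2$, where the $\Dom^{-}$ hypothesis and Lemma~\ref{lemma do} are invoked.
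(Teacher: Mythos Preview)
Your reduction of the claim to the two numerical inequalities $s+k_s\le j+k_j+2$ and $k_t\le k_i+2$ is correct, and the non-crossing cases are indeed immediate from monotonicity. However, the heart of your argument---deducing $k_t\le k_i+2$ from the rigidity of the single pair $E_i\oplus E_t(\w)$ in the crossing case $t<i$---does not go through. Here is a concrete counterexample: take $p=6$, $i=3$, $t=0$, $k_i=0$, $k_t=3$ (so $k_0=3,k_1=2,k_2=1,k_3=0$ is a legitimate $U$-sequence). Then $E_i=\langle 3,0\rangle$, $E_t(\w)=\langle 4,-3\rangle$, and $k_t-k_i=3>2$. Using Proposition~\ref{Rigid-domains} with the explicit description of stable Homs in $\DDD^{\bo}(\mathbf{k}\vec{\mathbb A}_{p-1})$ (namely $\underline{\Hom}(\langle a,b\rangle,\langle c,d\rangle)\neq 0$ iff $0\le d-b\le a$ and $a\le c+d-b\le p-2$), one checks that all four groups $\Ext^1$, $\Ext^2$ in both directions vanish; hence $E_i\oplus E_t(\w)$ is rigid. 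Moreover $\langle 4,-3\rangle\notin\Dom^+(\langle 3,0\rangle)$, so the $\Dom^-$ hypothesis of the section is met. Thus pair-rigidity alone cannot force the desired bound; your claim that ``$i-t\ge 3$ forces one of the two stable Homs to be nonzero'' is simply false.

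The paper's proof takes a genuinely different, global route: it first observes that $|I_1|\ge p+3$ and $|I_2|\ge p+3$ cannot hold simultaneously (since that would force $j<s\le t<i\le j$), and then, assuming say $|I_1|\ge p+3$, performs a segment count on $\bigcap_{n=i,j}\Dom_L(E_n)\cap\bigcap_{n=s,t}\Dom_L(E_n(\w))$ via Lemma~\ref{piece} to bound $|P|$, obtaining $|V|=|W|+|P|\le 5p+6<5p+7$. This uses the full force of $V$ being tilting (hence $|V|=5p+7$), which is precisely the global information your local pairwise argument discards. The bound on $|I_1|$ and $|I_2|$ is a consequence of the cardinality constraint, not of any single rigidity condition.
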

				\begin{proof}
					First we show that either	$|I_1|\ge p+3$ or 	$|I_2|\ge p+3$  holds. Notice that  $|I_1|=s+k_s-j-k_j+p$ and $|I_2|=k_t-k_i+p$.	 Assume that both $s+k_s-j-k_j\ge 3$ and $k_t-k_i \ge 3$ hold. Thus we have $j<s$ and $t<i$ , a contradiction to  $i\le j$. 
					
					Next we assume that $|I_1|\ge p+3$. In this case, we have $s+k_s-j-k_j\ge 3$, which yields that $s \ge j$. Thus  
					\begin{align} \label{tsji}
						t\ge s \ge j \ge i.
					\end{align}
					This implies that $|W|\le (t-s+1)+(j-i+1)=t-s+j-i+2$. 
					
					On the other hand,	by a simple calculation,  $P$ consists of $|I_1|-p-2$  $(\w,2)$-segments, $|I_2|-(|I_1|-p-2)$ $(\w,1)$-segments and $k_i-k_t+2$ $(\w,0)$-segments. Then by Lemma \ref{piece}, we have 
					\begin{align*}
						|P|&\le 5(|I_1|-p-2)+4(|I_2|-(|I_1|-p-2)+(k_i-k_t+2))\\
						&\le 4p+6+s+k_s-j-k_j.
					\end{align*}
					Hence $|W|+|P|\le 4p+8+t-i+k_s-k_j$. Since by (\ref{tsji}) we obtain  $t-i\le p-2$ and $k_s\le k_j$, we conclude that $|V|=|W|+|P|\le 5p+6$, a contradiction to $|V|=5p+7$. 	
									
					The other case for $|I_2|\ge p+3$ can be shown similarly. 
				\end{proof}
								

				\begin{proposition}\label{Dom^-}We have $|P|=4p+8$. Moreover, 
					$V$ is a slice in the $2$-cluster tilting subcategory $\mathcal{U}$. 
				
				\end{proposition}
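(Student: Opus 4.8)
The plan is to show that $V$ meets each $\w$-orbit of indecomposable objects of $\mathcal{U}$ exactly once; once this is established, $\mathcal{U}=\add\{V(\ell\w)\mid\ell\in\Z\}$, and since $V$ is a tilting bundle it is a slice in the $2$-cluster tilting subcategory it induces, namely $\mathcal{U}$, by Theorem~\ref{tilting-cluster tilting}. The equality $|P|=4p+8$ is the quantitative counterpart of this orbit count, so I would prove the numerical statement and the slice statement simultaneously.

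First I would bound $|W|$ from above. The indecomposable rank-four objects of $\mathcal{U}$ fall into exactly $p-1$ distinct $\w$-orbits, represented by $E_0,\dots,E_{p-2}$, while the line bundles $\OO(\x)$ form $|\L/\Z\w|=4p+8$ orbits. By Lemma~\ref{at most one member}, the tilting bundle $V$ contains at most one summand from each rank-four orbit, so $W$ has at most $p-1$ indecomposable summands, i.e. $|W|\le p-1$. Since $|V|=5p+7$, this gives $|P|=|V|-|W|\ge 4p+8$. Next I would bound $|P|$ from above by showing that $P$ meets each of the $4p+8$ line-bundle orbits at most once. All line-bundle summands of the rigid object $V$ lie in $D=\Dom_L(W)$. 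Suppose some $\w$-orbit contained two summands $\OO(\x),\OO(\x+\ell\w)$ of $P$ with $\ell>0$; by rigidity and Corollary~\ref{rigid domain among line bundles cor}(a) we must have $\ell=2$, so $D$ contains both $\OO(\x)$ and $\OO(\x-2\w)=\OO(\x+(p+2)\x_4)$. By the interval description of $D$ in terms of $I_1,I_2$, this forces $|I_1|\ge p+3$ or $|I_2|\ge p+3$, i.e. $D$ contains a roof, contradicting Lemma~\ref{D has no roofs}. Hence $|P|\le 4p+8$.

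Combining the two bounds with $|V|=5p+7$ yields $|P|=4p+8$ and $|W|=p-1$. Consequently $W$ meets each rank-four orbit exactly once and $P$ meets each line-bundle orbit exactly once, so $V$ meets every $\w$-orbit of $\mathcal{U}$ exactly once. As $V\in\mathcal{U}$ and $\mathcal{U}$ is closed under $(\w)$, we obtain $\add\{V(\ell\w)\mid\ell\in\Z\}=\mathcal{U}$, which is condition (a) of Definition~\ref{def.slice}. Condition (b), $\Hom(V,V(\ell\w))=0$ for $\ell>0$, then holds because $V$, being a tilting bundle, is already a slice in the $2$-cluster tilting subcategory it induces by Theorem~\ref{tilting-cluster tilting}, and that subcategory is exactly $\mathcal{U}$. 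Therefore $V$ is a slice in $\mathcal{U}$.

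The main obstacle is the upper bound on $|P|$, i.e. converting the rigidity constraint on $P$ into the ``no roofs'' property of $D$. The delicate point is to verify that two $P$-summands in one $\w$-orbit must differ by exactly $2\w$ and that their presence in $D$ genuinely produces a roof in the sense $|I_1|\ge p+3$ or $|I_2|\ge p+3$; this requires tracking how the $\w$-shift interchanges the ``top'' line bundles $\langle-1,\cdot\rangle$ and the ``bottom'' line bundles $\langle p-1,\cdot\rangle$ within a single orbit, using Lemma~\ref{lemma do} to locate the relevant endpoints inside $I_1$ and $I_2$.
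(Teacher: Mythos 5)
Your counting argument for $|P|=4p+8$ and for condition (a) of Definition \ref{def.slice} is sound and is a legitimate variant of the paper's: where the paper groups $D$ into $(\w,k)$-segments and applies Lemma \ref{piece}, you argue directly that two line-bundle summands of $P$ in one $\w$-orbit would have to differ by exactly $2\w$ (Corollary \ref{rigid domain among line bundles cor}(a)), hence would both be of the same type ($\langle -1,\cdot\rangle$ or $\langle p-1,\cdot\rangle$) since $-2\w=(p+2)\x_4$, forcing $|I_1|\ge p+3$ or $|I_2|\ge p+3$ and contradicting Lemma \ref{D has no roofs}. Combined with $|W|\le p-1$ from Lemma \ref{at most one member} and $|V|=5p+7$, this gives the equalities and $\mathcal{U}=\add\{V(\ell\w)\mid\ell\in\Z\}$ exactly as in the paper.

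The gap is in your treatment of condition (b). You deduce $\Hom(V,V(\ell\w))=0$ for $\ell>0$ from Theorem \ref{tilting-cluster tilting}, but that theorem concerns \emph{$2$-tilting} bundles, i.e.\ tilting bundles whose endomorphism algebra has global dimension at most $2$, and the hypothesis here only gives that $V$ is tilting. You cannot assume $\gl\End(V)\le 2$: the paper's classification shows that $\mathcal{U}$ contains genuinely non-slice tilting bundles (those of the form (\ref{form of tilting bundles}) with $i\neq 0$ or $j\neq p-2$, whose endomorphism algebras have global dimension $4$ by Theorem \ref{endomorphism algebra of T}), and for such bundles $\Hom(T,T(2\w))\neq 0$. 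So ``tilting'' alone does not yield condition (b), and establishing $\gl\End(V)\le 2$ is essentially equivalent to proving condition (b) in the first place, making your appeal circular. This is where the real work of the paper's proof lies: it verifies $\Hom(P,P(\ell\w))=0$, $\Hom(W,P(\ell\w))=0$, $\Hom(P,W(\ell\w))=0$ and $\Hom(W,W(\ell\w))=0$ separately, using the absence of roofs in $D$ (Lemma \ref{D has no roofs}) for the first, the endpoint locations of Lemma \ref{lemma do} together with Lemmas \ref{Hom-domains} and \ref{Hom-domains 2} for the mixed cases, and Proposition \ref{special case} plus the relation $E_a(\w)\in\Dom^{+}(E_b[-1])$ for the last. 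Your proof needs this verification; it cannot be replaced by the formal appeal to Theorem \ref{tilting-cluster tilting}.
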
	
					
					\begin{proof} By Lemma \ref{D has no roofs}, we have that each $\w$-orbit of line bundles contains either one or two elements in $D$. More precisely, there are precisely $|I_1|-(k_i-k_t+2)$	$(\w,1)$-segments. 
											Thus by Lemma \ref{piece} we have
											$$|P|\le 4(|I_1|+|I_2|)-4(|I_1|-(k_i-k_t+2))=4p+8.$$
											By Lemma \ref{at most one member}, we have $|W|\le p-1$. 
											It follows that $|V|=|W|+|P|\le 5p-7=|V|$. Therefore we have $|W|= p-1$ and $|P|=4p+8$, as claimed.

						By the above argument, each $\w$-orbit of line bundles contains exactly one element in $V$ and thus
					 we have $\mathcal{U}= \add \{V(\ell \w) \mid \ell \in \Z\}$. Hence the condition (a) in Definition \ref{def.slice} is satisfied. 	Next we need to check condition (b) there, that is, $\Hom(V,V(\ell\w))=0$ for any $\ell >0$. 					
					Let $W=M\oplus N:=\bigoplus_{k\in J}\langle a_k,b_k \rangle$.

					(1)  Let $\OO(\x), \OO(\y) \in \add P$. By (\ref{extension spaces between line bundles}),  we have $$\Hom_{}(\OO(\x),\OO(\y+\ell \w))=DR_{\y-\x+\ell \w}.$$ By (\ref{two possibilities of x}), this is zero if and only if $\y-\x+\ell \w \le 2\c+\w$ holds. Since $2\w<0$, it is enough to show that the inequality holds for $\ell=1,2$. The rigidity of $P$ implies that $\y-\x\le 2\c$. On the other hand, we have $\y-\x \not\ge -2\w$ since $\x,\y \in D$ and  $D$ has no roofs by Lemma \ref{D has no roofs}. Thus  $\y-\x+\w \le 2\c$ holds by (\ref{two possibilities of x}). Consequently we have $\Hom_{}(P,P(\ell \w))=0$ for any  $\ell >0$. 
					
					(2) We show that $\Hom(W, P(\ell\w))=0$  for any  $\ell >0$.
					
					We prove a slightly more general claim that $\Hom(W, \OO(\x+\ell\w))=0$ for any  $\x\in D$ and $\ell >0$. Thanks to Lemmas \ref{Hom-domains}(a), it suffices to show that 
					\begin{align} \label{112}
						\Hom(W, \langle -1,k_t-2\rangle )=0  \text{ and } \Hom(W, \langle p-1,(s+k_s-p-2)+1\rangle)=0.
					\end{align}							
					 Note that by Proposition \ref{L-action on 2-extension bundles}, we have 
					$$E_t(\w)=\langle p-2-t, t+k_t-p \rangle \ \text{ and }\ E_s(\w)=\langle p-2-s, s+k_s-p \rangle. $$ 
					 Then we have $a_k+b_k\ge k_t-2$ and $b_k\ge s+k_s-p$ for any $k\in J$.
					By Lemma \ref{Hom-domains 2}, we further have (\ref{112}).

					(3) We show that $\Hom(P, W(\ell\w))=0$ for any  $\ell >0$.
					
						We prove a slightly more general claim that $\Hom(\OO(\x-\ell\w), W )=0$ for any  $\x\in D$ and $\ell >0$. Thanks to Lemmas \ref{Hom-domains}(b), it suffices to show that 
					\begin{align} \label{113}
						\Hom( \langle -1,k_i+1\rangle,W )=0  \text{ and } \Hom( \langle p-1,(j+k_j-p+1)+1\rangle,W)=0.
					\end{align}	
					Since $k_i+1>k_i\ge b_k$ and $p-1+(j+k_j-p+1)+1>j+k_j\ge a_k+b_k$ hold for any $k\in J$, we  obtain (\ref{113}) by Lemma \ref{Hom-domains 2}.
					
					(4) We show that $\Hom(W, W(\ell\w))=0$ for any  $\ell >0$.
					
					By Proposition \ref{special case}, we have  $\Hom(U,U(\ell\w))=0$  for any $\ell >0$. It is enough to prove that $\Hom(N(-\w),M)=0$. Let $E_a:=\langle a,k_a \rangle\in \add N(-\w)$ and $E_b:=\langle b,k_b \rangle\in \add M$.  Then $E_a(\w)\in \Dom^{+}(E_b[-1])$ implies that $k_a \ge k_b+1$ if  $a\le b$, $a+k_a\ge b+k_b+1$ otherwise. In  both cases, we have $\Hom(E_a,E_b)=0$ by Lemma \ref{Hom-domains 2}. Thus $\Hom(W, W(\ell\w))=0$ for any  $\ell >0$.				This completes the proof.					
				\end{proof}

					Our main result in this section is the following.
				
				\begin{theorem} \label{A classification theorem}
					Let $V\in \mathcal{U}$. Then $V$ is tilting if and only if $V$ satisfies exactly one of the following conditions:
					\begin{itemize} 
						\item[(a)] $V$ is a $2$-canonical tilting bundle, up to degree shift.
						\item[(b)] $V$ is a slice in the $2$-cluster tilting subcategory $\mathcal{U}$.
						\item[(c)] $V$ has the form (\ref{form of tilting bundles})  with $i\neq 0$ or $j\neq p-2$.
					\end{itemize}
				\end{theorem}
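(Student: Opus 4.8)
The plan is to run the argument according to the number and type of rank-four indecomposable summands of $V$, since every indecomposable ACM summand of $V$ is either a line bundle or a rank-four $2$-extension bundle, and in $\mathcal{U}$ the latter are precisely the shifts $E_n(\ell\w)$.

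First I would treat the ``only if'' direction. Assume $V$ is tilting. If $V$ has no rank-four summand it consists of line bundles, so Theorem \ref{ACM tilting bundles consisting of line bundles} forces $V\cong T^{\ca}$ up to degree shift, which is (a). Otherwise $V$ has at least one rank-four summand. For any two rank-four summands $E,F$ the rigidity of $V$ gives $E\in\Dom(F)=\Dom^{+}(F)\cup\Dom^{-}(F)$ (a disjoint union, and symmetric in $E,F$ by Corollary \ref{Domain corollary}(b)), so exactly one of the following holds. If all such pairs lie in $\Dom^{+}$, then Theorem \ref{main theorem} shows $V$ has the form (\ref{form of tilting bundles}) for some $0\le i\le j\le p-2$: when $(i,j)\neq(0,p-2)$ this is (c), and when $(i,j)=(0,p-2)$ Proposition \ref{special case} shows $V$ is a slice in $\mathcal{U}$, which is (b). If instead some pair lies in $\Dom^{-}$, then Proposition \ref{Dom^-} shows directly that $V$ is a slice in $\mathcal{U}$, again (b).

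For the ``if'' direction each case is tilting by a result already available: (a) is the $2$-canonical tilting bundle of \cite{HIMO} up to the degree-shift autoequivalence $(\x)$; (b) is $2$-tilting, hence tilting, by Theorem \ref{tilting-cluster tilting}; and (c) is tilting by Theorem \ref{main theorem}.

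Finally I would establish that the three alternatives are mutually exclusive by counting the rank-four summands, which I expect to be the cleanest bookkeeping step. A bundle of type (a) has none; a slice in $\mathcal{U}$ has exactly $p-1$ of them, one in each of the $p-1$ rank-four $\w$-orbits generated by $U$ (this count is precisely what Proposition \ref{Dom^-} and Proposition \ref{special case} supply, while Lemma \ref{at most one member} guarantees at most one summand per orbit); and a bundle of type (c) has $j-i+1$ of them with $1\le j-i+1\le p-2<p-1$. Since the three ranges $0$, $p-1$ and $\{1,\dots,p-2\}$ are pairwise disjoint, no bundle can satisfy two of the conditions. The one point needing genuine care --- and the main obstacle --- is to verify that the boundary form-(\ref{form of tilting bundles}) bundle with $(i,j)=(0,p-2)$ really does exhaust $\mathcal{U}$ under $\w$-shifts, so that it is correctly absorbed into (b) and legitimately excluded from (c); this uses the identity $\langle a,b\rangle(\w)=\langle p-2-a,a+b-p\rangle$ to track the phases of the rank-four summands and the fact that $\bigoplus_{n=0}^{p-2}\langle n,k_n\rangle$ meets every rank-four $\w$-orbit exactly once.
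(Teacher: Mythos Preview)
Your proposal is correct and follows essentially the same route as the paper's proof: split by whether $V$ has rank-four summands, invoke Theorem~\ref{ACM tilting bundles consisting of line bundles} for the line-bundle case, and in the other case branch on whether all rank-four pairs lie in $\Dom^{+}$ (Theorem~\ref{main theorem}, then Proposition~\ref{special case} for the boundary $(i,j)=(0,p-2)$) or some pair lies in $\Dom^{-}$ (Proposition~\ref{Dom^-}). The ``if'' direction is identical to the paper's.

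Your explicit mutual-exclusivity argument via the count of rank-four summands is a small addition: the paper's proof does not spell this out inside the theorem and instead records the trichotomy $|W|\in\{0\}\sqcup\{1,\dots,p-2\}\sqcup\{p-1\}$ separately in the corollary that follows. One minor remark: the fact that a slice in $\mathcal{U}$ has exactly $p-1$ rank-four summands is immediate from the definition of a slice (it meets each $\w$-orbit of indecomposables in $\mathcal{U}$ exactly once), so you do not need to route this through Propositions~\ref{special case} and~\ref{Dom^-}; those propositions show that the specific bundles under consideration \emph{are} slices, not that slices have this count. Likewise, your final paragraph's worry about the $(i,j)=(0,p-2)$ boundary is already fully handled by Proposition~\ref{special case}, whose proof establishes directly that such a $T$ is a slice in $\mathcal{U}$, so no further phase-tracking via $\langle a,b\rangle(\w)=\langle p-2-a,a+b-p\rangle$ is needed.
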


				\begin{proof}
					`$\Leftarrow$': This  follows directly from \cite[Theorem 6.2]{HIMO}, Theorems \ref{tilting-cluster tilting} and \ref{main theorem}. 
					
					`$\Rightarrow$': We divide the proof into the following two cases.
					
					\emph{Case $1$}: $V$ consists  of line bundles. By Theorem \ref{ACM tilting bundles consisting of line bundles},  the statement (a) holds.

					\emph{Case $2$}: $V$ contains indecomposable direct summands of rank four. Take a decomposition $V=W\oplus P$, where $P$ is a maximal projective direct summand of $V$. 
					
					Assume that for any indecomposable direct summands $E$ and $F$ of $W$, we have
					$E\in\Dom^{+}(F)$. Then by Theorem \ref{main theorem}, $U$ has the form (\ref{form of tilting bundles}). Note that  $|W|\le p-1$ holds by Lemma \ref{at most one member}. If $|W|\le p-2$, then the  statement (c) holds. Otherwise by Proposition \ref{special case} and Theorem \ref{tilting-cluster tilting}, the  statement (b) holds. 
					
					Assume that there exist indecomposable direct summands $E$ and $F$ of $W$ such that
					$E\in\Dom^{-}(F)$. Then the statement (b) holds by Proposition \ref{Dom^-}. 
				\end{proof}

				As a direct consequence of the preceding result, we have the following corollary.  
				
				\begin{corollary}Let $V\in \mathcal{U}$ be a tilting bundle on $\X$. Take a decomposition $V=W\oplus P$, where $P$ is a maximal projective direct summand of $V$. Then
					\begin{itemize} 
						\item[(a)]   $V$ is a $2$-canonical tilting bundle (up to degree shift) if and only if $|W|=0$.
						\item[(b)] $V$ has the form (\ref{form of tilting bundles})  with $i\neq 0$ or $j\neq p-2$ if and only if $1\le |W| \le p-2$. 
						\item[(c)]   $V$ is a slice in the $2$-cluster tilting subcategory $\mathcal{U}$ if and only if $|W|=p-1$.
					\end{itemize}
				\end{corollary}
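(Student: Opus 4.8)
The plan is to read the three ranges of $|W|$ directly off the three cases of Theorem~\ref{A classification theorem}, where throughout $V=W\oplus P$ with $P$ the maximal line-bundle (projective) summand and $W$ the sum of the rank-four indecomposable summands. Since $V$ is assumed tilting, the theorem places $V$ in exactly one of its cases (a)--(c); so my strategy is to compute $|W|$ under each of these, obtain the three disjoint ranges $\{0\}$, $[1,p-2]$, $\{p-1\}$, and then recover the reverse implications formally from mutual exclusivity together with the bound $|W|\le p-1$ coming from Lemma~\ref{at most one member}.

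The two easy cases I would dispatch first. If $V$ is a $2$-canonical tilting bundle up to degree shift, then all its summands are line bundles, so $W=0$ and $|W|=0$. If $V$ has the form (\ref{form of tilting bundles}) with $i\neq 0$ or $j\neq p-2$, then its rank-four summands are exactly $\bigoplus_{i\le n\le j}E_n$ with the $E_n=\langle n,k_n\rangle$ pairwise non-isomorphic, so $|W|=j-i+1$; the range $0\le i\le j\le p-2$ gives $|W|\ge 1$, while $|W|=p-1$ would force $(i,j)=(0,p-2)$, which is excluded, so $1\le |W|\le p-2$.

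The remaining case---$V$ a slice in $\mathcal U$---is the heart of the matter, and I would treat it by counting $\w$-orbits. Because degree shift by $\w$ preserves rank, the rank-four indecomposables of $\mathcal U$ and the line bundles of $\mathcal U$ fall into separate families of $\w$-orbits. The completion $T$ of $U=\bigoplus_{n=0}^{p-2}E_n$ (Proposition~\ref{special case}) is itself, by Theorem~\ref{tilting-cluster tilting}, a slice in $\mathcal U$ whose rank-four part is $U$ with $|U|=p-1$; by Definition~\ref{def.slice} the summands of $T$ meet each $\w$-orbit of $\mathcal U$ exactly once, so the $p-1$ summands of $U$ exhaust the rank-four orbits and there are precisely $p-1$ of them. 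Since any slice $V$ likewise meets each orbit exactly once, its rank-four part must satisfy $|W|=p-1$.

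It then remains to upgrade these three forward implications to equivalences, which is pure bookkeeping. For a tilting $V\in\mathcal U$ the value $|W|$ lies in $\{0,1,\dots,p-1\}$ by Lemma~\ref{at most one member}, and this set is partitioned by the disjoint ranges above; since Theorem~\ref{A classification theorem} puts $V$ in exactly one case and the forward implications send distinct cases to disjoint ranges, each range determines the case uniquely (for instance $|W|=0$ rules out the slice case, which needs $|W|=p-1\ge 1$, and any $V$ of the form (\ref{form of tilting bundles}) with $(i,j)\neq(0,p-2)$, which would give $|W|\ge 1$, leaving only the $2$-canonical case). I expect the only genuine obstacle to be the orbit-counting in the slice case; everything else is immediate once the reference slice $T$ fixes the number of rank-four $\w$-orbits at $p-1$.
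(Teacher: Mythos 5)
Your proposal is correct and matches the paper's intent: the paper presents this corollary as an immediate consequence of Theorem \ref{A classification theorem}, and your count of $|W|$ in each case ($0$ for the $2$-canonical bundle, $j-i+1\le p-2$ for the form (\ref{form of tilting bundles}) with $(i,j)\neq(0,p-2)$, and $p-1$ for a slice, the latter also appearing explicitly in Proposition \ref{Dom^-} and in the proof of the theorem) together with the mutual exclusivity asserted there is exactly the intended argument.
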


				\begin{remark} We remark that for a general $2$-cluster tilting subcategory $\MM$ of $\vect \X$, there may exist a tilting bundle on $\X$ contained in $\MM$ different from the cases in Theorem \ref{A classification theorem}. 
				 For example, let $\X$ be of weight type $(2,2,2,5)$. As in Setting \ref{setting 1}, we let $W:=\langle 3,-2 \rangle \oplus \langle 1,0 \rangle \oplus \langle 3,0 \rangle $ and $M:=W\oplus \langle 0,0\rangle$. 	
				 			 
				 It is easy to check that $M$ is a $2$-tilting object in $\underline{\ACM} \, \X$, and  it further induces the cluster tilting category $\MM:=\{M(\ell\w),\, \OO(\x)\mid \ell\in \Z,\, \x\in \L\}$ of $\vect \X$ by \cite[Theorems 4.53, 5.23]{HIMO}. Let $g:=\x_1-\x_2$ and $S:=\bigcup_{f\in G\setminus\{g\}}[-4\x_4+f,-\x_4+f]$.
				We claim that the object $$V:=W\oplus \big(\bigoplus_{\x\in S\cup\mathcal{L}_{g}(\langle 1,0 \rangle)}\OO(\x)\big)$$
				is a tilting bundle on $\X$, where $\mathcal{L}_{g}(\langle 1,0 \rangle):=[-\x_1-\x_2-\x_4,\bar{x}_{12}+\x_4]$. 
				It can be described as in Figure \ref{An remark},	where we depict the indecomposable direct summands of $V$: the $2$-extension bundles $\langle 3,-2\rangle$, $\langle 1,0\rangle$ and $\langle 3,0\rangle$ are shown in circles, and the line bundles over $\mathcal{L}_{g}(\langle 1,0 \rangle)$ (resp. $S$) are colored in red (resp. blue).		
				\begin{figure}[H] \resizebox{\textwidth}{!}{
					\begin{xy} 0;<15pt,0pt>:<0pt,16pt>::
						(14,10) *+{(-\c-\x_4)} ="51",
						(14,6) *+{\langle 2,-4 \rangle} ="32",
						(16,8) *+{  \langle 3,-4 \rangle} ="42",
						(18,10) *+{ (-\c)} ="52",
						(14,2) *+{ \langle 0,-3 \rangle} ="13",
						(16,4) *+{ \langle 1,-3 \rangle} ="23",
						(18,6) *+{  \langle 2,-3 \rangle} ="33",
						(20,8) *+{ \langle 3,-3 \rangle} ="43",
						(22,10) *+{(-4\x_4)} ="53",
						(16,0) *+{ (\w-2\x_4)} ="04",
						(18,2) *+{  \langle 0,-2 \rangle} ="14",
						(20,4) *+{\langle 1,-2 \rangle} ="24",
						(22,6) *+{\langle 2,-2 \rangle} ="34",
						(24,8) *+[Fo]{{\begin{smallmatrix} \scalebox{1}{$\langle 3,-2 \rangle$}\end{smallmatrix}}} ="44",
						(26,10) *+{(-3\x_4)} ="54",
						(20,0) *+{ (\w-\x_4)} ="05",
						(22,2) *+{\langle 0,-1 \rangle} ="15",
						(24,4) *+{\langle 1,-1 \rangle} ="25",
						(26,6) *+{\langle 2,-1 \rangle} ="35",
						(28,8) *+{ \langle 3,-1 \rangle} ="45",
						(30,10) *+{(-2\x_4)} ="55",
						(30,10) *+{} ="5500",							
						(24,0) *+{(\w)} ="06",
						(26,2) *+{\langle 0,0 \rangle} ="16",
						(28,4) *+[Fo]{{\begin{smallmatrix} \scalebox{1}{$\langle 1,0 \rangle$}\end{smallmatrix}}} ="26",
						(30,6) *+{\ \scalebox{1}{${\langle 2,0 \rangle}$}} ="36",
						(32,8) *+[Fo]{{\begin{smallmatrix} \scalebox{1}{$\langle 3,0 \rangle$}\end{smallmatrix}}} ="46",
						(34,10) *+{(-\x_4)} ="56",
						(28,0) *+{ (\w+\x_4)} ="07",
						(28,0) *+{} ="0700",
						(30,2) *+{\langle 0,1 \rangle} ="17",
						(32,4) *+{\langle 1,1 \rangle} ="27",
						(34,6) *+{\langle 2,1 \rangle} ="37",
						(36,8) *+{\langle 3,1 \rangle} ="47",
						(38,10) *+{(0)} ="57",
						(32,0) *+{(\w+2\x_4)} ="08",
						(34,2) *+{\langle 0,2 \rangle} ="18",
						(36,4) *+{\langle 1,2 \rangle} ="28",
						(38,6) *+{ \langle 2,2 \rangle} ="38",
						(40,8) *+{ \langle 3,2 \rangle} ="48",
						(42,10) *+{(\x_4)} ="58",
						(36,0) *+{(\w+3\x_4)} ="09",
						(38,2) *+{ \langle 0,3 \rangle} ="19",
						(40,4) *+{\langle 1,3 \rangle} ="29",
						(42,6) *+{\langle 2,3 \rangle} ="39",
						(44,8) *+{\langle 3,3 \rangle} ="49",
						(40,0) *+{(\w+4\x_4)} ="010",
						(42,2) *+{\langle 0,4 \rangle} ="110",
						(44,4) *+{\langle 1,4 \rangle} ="210",
						(44,0) *+{(\w+\c)} ="011",
						(14,10.8) *+{\scalebox{1}{$-\c-\x_4$}},(14,11.6) *+{\scalebox{1}{$-\x_2-\x_3-\x_4$}},(14,12.4) *+{\scalebox{1}{$-\x_1-\x_3-\x_4$}},(14,13.2) *+{\red \scalebox{1.08}{$-\x_1-\x_2-\x_4$}},
						(18,10.8) *+{\scalebox{1}{$-\c$}},(18,11.6) *+{\scalebox{1}{$-\x_2-\x_3$}},(18,12.4) *+{\scalebox{1}{$-\x_1-\x_3$}},(18,13.2) *+{\red\scalebox{1.08}{$-\x_1-\x_2$}},
						(22,10.8) *+{\blue \scalebox{1}{$-4\x_4$}},(22,11.6) *+{\blue \scalebox{1.08}{$\bar{x}_{23}-4\x_4$}},(22,12.4) *+{\blue \scalebox{1.08}{$\bar{x}_{13}-4\x_4$}},(22,13.2) *+{\red\scalebox{1.08}{$\bar{x}_{12}-4\x_4$}},
						(26,10.8) *+{\blue\scalebox{1}{$-3\x_4$}},(26,11.6) *+{\blue\scalebox{1.08}{$\bar{x}_{23}-3\x_4$}},(26,12.4) *+{\blue\scalebox{1.08}{$\bar{x}_{13}-3\x_4$}},(26,13.2) *+{\red\scalebox{1.08}{$\bar{x}_{12}-3\x_4$}},
						(30,10.8) *+{\blue\scalebox{1}{$-2\x_4$}},(30,11.6) *+{\blue\scalebox{1.08}{$\bar{x}_{23}-2\x_4$}},(30,12.4) *+{\blue\scalebox{1.08}{$\bar{x}_{13}-2\x_4$}},(30,13.2) *+{\red\scalebox{1.08}{$\bar{x}_{12}-2\x_4$}},
						(34,10.8) *+{\blue \scalebox{1}{$-\x_4$}},(34,11.6) *+{\blue\scalebox{1.08}{$\bar{x}_{23}-\x_4$}},(34,12.4) *+{\blue\scalebox{1.08}{$\bar{x}_{13}-\x_4$}},(34,13.2) *+{\red\scalebox{1.08}{$\bar{x}_{12}-\x_4$}},
						(38,10.8) *+{\scalebox{1}{$0$}},(38,11.6) *+{\scalebox{1}{$\bar{x}_{23}$}},(38,12.4) *+{\scalebox{1}{$\bar{x}_{13}$}},(38,13.2) *+{\red\scalebox{1.08}{$\bar{x}_{12}$}},
						(42,10.8) *+{\scalebox{1}{$\x_4$}},(42,11.6) *+{\scalebox{1}{$\bar{x}_{23}+\x_4$}},(42,12.4) *+{\scalebox{1}{$\bar{x}_{13}+\x_4$}},(42,13.2) *+{\red\scalebox{1.08}{$\bar{x}_{12}+\x_4$}},			
						(16,-0.8) *+{\scalebox{1}{$-\x_3-3\x_4$}},(16,-1.6) *+{\scalebox{1}{$-\x_2-3\x_4$}},(16,-2.4) *+{\scalebox{1}{$-\x_1-3\x_4$}},(16,-3.2) *+{\scalebox{1}{$\w-2\x_4$}},
						(20,-0.8) *+{\scalebox{1}{$-\x_3-2\x_4$}},(20,-1.6) *+{\scalebox{1}{$-\x_2-2\x_4$}},(20,-2.4) *+{\scalebox{1}{$-\x_1-2\x_4$}},(20,-3.2) *+{\scalebox{1}{$\w-\x_4$}},
						(24,-0.8) *+{\scalebox{1}{$-\x_3-\x_4$}},(24,-1.6) *+{\red\scalebox{1.08}{$-\x_2-\x_4$}},(24,-2.4) *+{\red\scalebox{1.08}{$-\x_1-\x_4$}},(24,-3.2) *+{\red\scalebox{1.08}{$\w$}},
						(28,-0.8) *+{\scalebox{1}{$-\x_3$}},(28,-1.6) *+{\red\scalebox{1.08}{$-\x_2$}},(28,-2.4) *+{\red\scalebox{1.08}{$-\x_1$}},(28,-3.2) *+{\red\scalebox{1.08}{$\w+\x_4$}},
						(32,-0.8) *+{\scalebox{1}{$-\x_3+\x_4$}},(32,-1.6)*+{\red\scalebox{1.08}{$-\x_2+\x_4$}},(32,-2.4) *+{\red\scalebox{1.08}{$-\x_1+\x_4$}},(32,-3.2) *+{\red\scalebox{1.08}{$\w+2\x_4$}},
						(36,-0.8) *+{\scalebox{1}{$-\x_3+2\x_4$}},(36,-1.6) *+{\scalebox{1}{$-\x_2+2\x_4$}},(36,-2.4) *+{\scalebox{1}{$-\x_1+2\x_4$}},(36.3,-3.2) *+{\scalebox{1}{$\w+3\x_4$}},
						(40,-0.8) *+{\scalebox{1}{$-\x_3+3\x_4$}},(40,-1.6) *+{\scalebox{1}{$-\x_2+3\x_4$}},(40,-2.4) *+{\scalebox{1}{$-\x_1+3\x_4$}},(40,-3.2) *+{\scalebox{1}{$\w+4\x_4$}},
						(43.8,-0.8) *+{\scalebox{1}{$-\x_3+4\x_4$}},(43.8,-1.6) *+{\scalebox{1}{$-\x_2+4\x_4$}},(43.8,-2.4) *+{\scalebox{1}{$-\x_1+4\x_4$}},(44,-3.2) *+{\scalebox{1}{$\w+\c$}},
						"51", {\ar"42"},
						"32", {\ar"23"},
						"32", {\ar"42"},
						"42", {\ar"33"},
						"42", {\ar"52"},
						"52", {\ar"43"},
						"13", {\ar"23"},
						"33", {\ar"43"},
						"13", {\ar"04"},
						"23", {\ar"14"},
						"23", {\ar"33"},
						"33", {\ar"24"},
						"43", {\ar"34"},
						"43", {\ar"53"},
						"53", {\ar"44"},
						"04", {\ar"14"},
						"14", {\ar"24"},
						"34", {\ar"44"},
						"14", {\ar"05"},
						"24", {\ar"15"},
						"24", {\ar"34"},
						"34", {\ar"25"},
						"44", {\ar"35"},
						"44", {\ar"54"},
						"54", {\ar"45"},
						"05", {\ar"15"},
						"15", {\ar"25"},
						"35", {\ar"45"},
						"35", {\ar"26"},
						"15", {\ar"06"},
						"25", {\ar"16"},
						"25", {\ar"35"},
						"45", {\ar"36"},
						"45", {\ar"55"},
						"55", {\ar"46"},
						"06", {\ar"16"},
						"16", {\ar"26"},
						"36", {\ar"46"},
						"36", {\ar"27"},
						"16", {\ar"07"},
						"26", {\ar"17"},
						"26", {\ar"36"},
						"46", {\ar"37"},
						"46", {\ar"56"},
						"56", {\ar"47"},
						"07", {\ar"17"},
						"17", {\ar"27"},
						"37", {\ar"47"},
						"37", {\ar"28"},
						"27", {\ar"37"},
						"17", {\ar"08"},
						"27", {\ar"18"},
						"47", {\ar"38"},
						"47", {\ar"57"},
						"57", {\ar"48"},
						"08", {\ar"18"},
						"18", {\ar"28"},
						"28", {\ar"38"},
						"38", {\ar"48"},
						"38", {\ar"29"},
						"18", {\ar"09"},
						"28", {\ar"19"},
						"48", {\ar"39"},
						"48", {\ar"58"},
						"58", {\ar"49"},
						"09", {\ar"19"},
						"19", {\ar"29"},
						"39", {\ar"49"},
						"39", {\ar"210"},
						"19", {\ar"010"},
						"29", {\ar"110"},
						"29", {\ar"39"},
						"010", {\ar"110"},
						"110", {\ar"210"},
						"110", {\ar"011"},
						%
					\end{xy}
				} \caption{A tilting bundle beyond the cases in Theorem \ref{A classification theorem}} \label{An remark}
			\end{figure}

			The rigidity of $V$ follows from Proposition \ref{Rigid-domains} and Corollary \ref{rigid domain among line bundles cor}. Moreover,  the exactness of the sequence $$0\to \langle 3,-2\rangle \to \big(\bigoplus_{f\in G} \OO(-3\x_4+f)\big)\oplus \langle 1,0\rangle \to \langle 2,0\rangle \to 0$$ in $\coh \X$ implies that $\langle 2,0\rangle\in \thick W$.  Since  $\Dom^{+}_{\triangle}(\langle 1,0\rangle)\subset \thick W$ holds by Proposition \ref{reback lemma}, we have that $\thick W$ contains the slice 
				$$T=\big(\bigoplus_{0\le i\le 3}\langle i,0\rangle \big)\oplus \big(\bigoplus_{\x\in S'} \OO(\x)  \big)$$				
				in $\MM$, where $S':=\bigcup_{0\le n_1\le 2}\langle -1,n_1 \rangle \cup \bigcup_{-4\le n_2\le -1}\langle p-1,n_2+1 \rangle$. Hence we have $\thick W=\DDD^{\bo}(\coh\X)$ and  the claim follows. 		
				\end{remark}

				\section{Endomorphism algebras of $T$}	 \label{sec: Endomorphism algebras of}
				In this section, we show that the endomorphism algebras of the form {(\ref{form of tilting bundles})} are  almost $2$-representation infinite algebras.  Let us start with recalling the following basic concepts in \cite{HIMO, HIO}.
				
				Let $\Lambda$ be a finite dimensional $\bf k$-algebra of finite global dimension. The \emph{$2$-shifted Nakayama functors} are defined by	
				\begin{align*}
					\nu_2&:=(D\Lambda)[-2]\Lotimes_\Lambda-: \DDD^{\bo}(\mod\Lambda)\to \DDD^{\bo}(\mod\Lambda),\\
					\nu_2^{-1}&:=\RHom_{\Lambda}(D\Lambda,-[2]):\DDD^{\bo}(\mod\Lambda)\to\DDD^{\bo}(\mod\Lambda).
				\end{align*}	
				They are mutually quasi-inverse. Moreover, for any $X,Y\in\DDD^{\bo}(\mod\Lambda)$, there exists the following functorial isomorphism
				$$\Hom_{\DDD^{\bo}(\mod\Lambda)}(X,Y)\simeq D\Hom_{\DDD^{\bo}(\mod\Lambda)}(Y,\nu_2 X[2]).$$
				
				The following classes of finite dimensional algebras are central in this section.
				\begin{definition} (See \cite[Definition 2.7]{HIO}, \cite[Definition 2.21]{HIMO}) \label{define almost d-RI} 
					Let $\Lambda$ be a finite dimensional $\bf k$-algebra of finite global dimension.					
					\begin{itemize}
					\item[(a)] The algebra 	$\Lambda$ is called \emph{$2$-representation infinite}	if 	$\nu_2^{-i}(\Lambda)\in\mod \Lambda$ for all $i\ge0$. This is equivalent to that $\nu_2^i(D\Lambda)\in\mod\Lambda$ for all
					$i\ge0$ by \cite[Proposition 2.9]{HIO}.		
					\item[(b)] The algebra $\Lambda$ is called \emph{almost $2$-representation infinite} if
					$H^j(\nu_2^{-i}(\Lambda))=0$ holds for all $i\in\Z$ and all $j\in\Z\setminus\{0,2\}$.
					\end{itemize}		
				\end{definition}
				
				Clearly any $2$-representation infinite algebra is almost $2$-representation infinite.  Moreover, the following properties are shown in \cite[Proposition 2.22]{HIMO}.
				
				\begin{proposition} \label{2-IF and 2-AIF} 
					\begin{itemize}
						\item[(a)] $2$-representation infinite algebras are precisely
						almost $2$-representation infinite algebras of global dimension $2$
						\item[(b)] An almost $2$-representation infinite algebra has global dimension $2$ or $4$.
					\end{itemize}
				\end{proposition}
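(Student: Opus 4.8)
The plan is to reduce both statements to the explicit evaluation of the complexes $\nu_2^{\pm1}(\Lambda)$ together with one standard homological identity. First I would record, straight from the definitions $\nu_2=(D\Lambda)[-2]\Lotimes_\Lambda-$ and $\nu_2^{-1}=\RHom_\Lambda(D\Lambda,-[2])$ and the isomorphism $D\Lambda\Lotimes_\Lambda\Lambda\simeq D\Lambda$, the two basic formulas $\nu_2(\Lambda)\simeq D\Lambda[-2]$ and $\nu_2^{-1}(\Lambda)\simeq\RHom_\Lambda(D\Lambda,\Lambda)[2]$. The second gives $H^{j}(\nu_2^{-1}(\Lambda))\simeq\Ext^{j+2}_\Lambda(D\Lambda,\Lambda)$. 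The load-bearing input is the fact that, for $\Lambda$ of finite global dimension, $\gl\Lambda=\max\{k\mid\Ext^{k}_\Lambda(D\Lambda,\Lambda)\neq0\}$; equivalently, $\mathrm{pd}\,D\Lambda=\gl\Lambda$ and the top group $\Ext^{\gl\Lambda}_\Lambda(D\Lambda,\Lambda)$ is nonzero. I will also use throughout that $\nu_2$ and $\nu_2^{-1}$ are mutually quasi-inverse autoequivalences, hence carry nonzero objects to nonzero objects.

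For (b), assume $\Lambda$ is almost $2$-representation infinite. The amplitude hypothesis at $i=1$ reads $H^{j}(\nu_2^{-1}(\Lambda))=0$ for $j\notin\{0,2\}$, which by the formula above is exactly $\Ext^{k}_\Lambda(D\Lambda,\Lambda)=0$ for all $k\notin\{2,4\}$. Since $\nu_2^{-1}(\Lambda)\neq0$, the groups $\Ext^{2}$ and $\Ext^{4}$ cannot both vanish, so $\max\{k\mid\Ext^{k}_\Lambda(D\Lambda,\Lambda)\neq0\}\in\{2,4\}$; by the identity this maximum equals $\gl\Lambda$, so $\gl\Lambda\in\{2,4\}$.

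For (a) I would prove both implications. If $\Lambda$ is $2$-representation infinite, then $\nu_2^{-i}(\Lambda)\in\mod\Lambda$ for $i\ge0$; using the equivalent formulation $\nu_2^{i}(D\Lambda)\in\mod\Lambda$ from \cite[Proposition 2.9]{HIO} together with $\nu_2^{m}(\Lambda)\simeq\nu_2^{m-1}(D\Lambda)[-2]$, the positive iterates are concentrated in degree $2$, so every $\nu_2^{-i}(\Lambda)$ has cohomology inside $\{0,2\}$ and $\Lambda$ is almost $2$-representation infinite; moreover $\nu_2^{-1}(\Lambda)\in\mod\Lambda$ forces $\Ext^{k}_\Lambda(D\Lambda,\Lambda)$ to be concentrated in degree $2$, whence $\gl\Lambda=2$ by the identity. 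Conversely, let $\Lambda$ be almost $2$-representation infinite with $\gl\Lambda=2$, and argue by induction that $\nu_2^{-i}(\Lambda)\in\mod\Lambda$ for all $i\ge0$. The case $i=0$ is trivial, and if $M:=\nu_2^{-i}(\Lambda)$ is a module then $\nu_2^{-(i+1)}(\Lambda)\simeq\RHom_\Lambda(D\Lambda,M)[2]$ has cohomology only in degrees $-2,-1,0$ because $\mathrm{pd}\,D\Lambda=\gl\Lambda=2$; intersecting this amplitude with the almost $2$-representation infinite range $\{0,2\}$ leaves only degree $0$, so $\nu_2^{-(i+1)}(\Lambda)$ is again a module (and nonzero, as $\nu_2^{-1}$ is an equivalence).

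The manipulation of shifts and amplitudes is routine bookkeeping; the genuinely delicate ingredient is the homological identity $\gl\Lambda=\max\{k\mid\Ext^{k}_\Lambda(D\Lambda,\Lambda)\neq0\}$, in particular the \emph{nonvanishing} of the top $\Ext$-group, which is what lets $\nu_2^{-1}(\Lambda)$ \emph{detect} rather than merely bound the global dimension. I expect establishing (or citing) this identity, and the accompanying equality $\mathrm{pd}\,D\Lambda=\gl\Lambda$, to be the main point requiring care: the nonvanishing can be obtained by transporting a simple $S$ with $\mathrm{pd}\,S=\gl\Lambda$ and a simple $T$ with $\Ext^{\gl\Lambda}_\Lambda(S,T)\neq0$ through the short exact sequences $0\to S\to D\Lambda\to Q\to0$ and $0\to\mathrm{rad}\,P(T)\to\Lambda\to T\to0$, using that the top $\Ext$ is right exact in each argument. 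Everything else then follows formally from the definitions of $\nu_2,\nu_2^{-1}$ and their being autoequivalences.
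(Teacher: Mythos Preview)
The paper does not prove this proposition: it is quoted verbatim from \cite[Proposition~2.22]{HIMO} and used as a black box, so there is no in-text argument to compare against. Your proposal, by contrast, supplies a self-contained proof, and the strategy is sound. The identification $H^{j}(\nu_2^{-1}(\Lambda))\simeq\Ext^{j+2}_\Lambda(D\Lambda,\Lambda)$ together with the identity $\gl\Lambda=\max\{k\mid\Ext^{k}_\Lambda(D\Lambda,\Lambda)\neq0\}$ is exactly the mechanism the paper itself invokes later (as \cite[Observation~2.7]{HIMO}) when applying this proposition in the proof of Theorem~\ref{endomorphism algebra of T}(b), so you have correctly located the load-bearing input. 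Both parts (a) and (b) then follow as you outline: the amplitude constraint at $i=1$ pins $\gl\Lambda$ to $\{2,4\}$, and the inductive step for the converse in (a) is clean once $\mathrm{pd}\,D\Lambda=2$ forces $\nu_2^{-1}(M)$ into degrees $\{-2,-1,0\}$.

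One small point to tidy: the short exact sequence you write as $0\to\mathrm{rad}\,P(T)\to\Lambda\to T\to 0$ is not literally exact (the kernel of a surjection $\Lambda\twoheadrightarrow T$ is larger than $\mathrm{rad}\,P(T)$). What you need is any surjection $\Lambda\twoheadrightarrow T$ with kernel $K$; then right-exactness of $\Ext^{\gl\Lambda}_\Lambda(D\Lambda,-)$ (from $\Ext^{\gl\Lambda+1}_\Lambda(D\Lambda,K)=0$) transports the nonvanishing from $T$ to $\Lambda$. With that correction the nonvanishing argument goes through, and your proof stands.
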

				
				Let $V$ be a tilting bundle on $\X$ with $\Lambda:=\End_{}(V).$ 
				According to \cite{HIMO},  there are triangle equivalences
				\[V\Lotimes_\Lambda-:\DDD^{\bo}(\mod\Lambda)\to\DDD^{\bo}(\coh\X)
				\ \mbox{ and }\ \RHom_{\X}(V,-):\DDD^{\bo}(\coh\X)\to\DDD^{\bo}(\mod\Lambda),\]
				which are mutually quasi-inverse and make the following diagram commutative.
				\begin{equation}\label{derived equivalence}
					\xymatrix{
						\DDD^{\bo}(\mod\Lambda)\ar@{-}[rr]^{\sim}\ar[d]^{\nu_2}&&\DDD^{\bo}(\coh\X)\ar[d]^{(\w)}\\
						\DDD^{\bo}(\mod\Lambda)\ar@{-}[rr]^{\sim}&&\DDD^{\bo}(\coh\X)
				}\end{equation}
				
				Recall from \cite{HIMO} that the endomorphism algebra of the $2$-canonical titling bundle $T^{\ca}:=\bigoplus_{0\le \x \le 2\c}\OO(\x)$ is an almost $2$-representation infinite algebra.
				The following result shows that the same holds for the endomorphism algebra of $T$ given in {(\ref{form of tilting bundles})}.

				\begin{theorem} \label{endomorphism algebra of T}
					Let $T:=T_{g,h}(E_i,E_j)$ be a tilting bundle on $\X$ of the form {(\ref{form of tilting bundles})} and $\Lambda:=\End_{}(T)$. Then we have the following assertions.
					\begin{itemize} 
						\item[(a)] $\Lambda$ is an almost $2$-representation infinite algebra.
						\item[(b)] $\Lambda$ is a $2$-representation infinite algebra if and only if $i=0$ and $j=p-2$.
					\end{itemize}
				\end{theorem}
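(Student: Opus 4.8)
The plan is to read off the (almost) $2$-representation infiniteness of $\Lambda=\End(T)$ from the self-extensions of $T$ under the Serre twist, using the commutative diagram (\ref{derived equivalence}). Since the equivalence $\Psi:=T\Lotimes_\Lambda-$ sends $\Lambda$ to $T$ and carries $\nu_2$ to $(\w)$, one gets $\nu_2^{-i}(\Lambda)\simeq\RHom_\X(T,T(-i\w))$ and hence $H^j(\nu_2^{-i}(\Lambda))\cong\Ext^j_\X(T,T(-i\w))$ for all $i,j$. As $\coh\X$ has global dimension $2$, the only cohomology that can occur outside degrees $0,2$ lives in degree $1$, so by Definition \ref{define almost d-RI}(b) part (a) is equivalent to the single condition $\Ext^1_\X(T,T(\ell\w))=0$ for every $\ell\in\Z$. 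Granting (a), Proposition \ref{2-IF and 2-AIF} shows $\Lambda$ is $2$-representation infinite precisely when $H^2(\nu_2^{-i}(\Lambda))=0$ for all $i\ge0$; rewriting $\Ext^2_\X(T,T(-i\w))\cong D\Hom_\X(T,T((i+1)\w))$ via Serre duality (\ref{Auslander-Reiten-Serre duality}), this becomes $\Hom_\X(T,T(m\w))=0$ for all $m\ge1$, the criterion I will test in (b).

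For (a) I would split $T=E\oplus P$ into its rank-four part $E=\bigoplus_{i\le n\le j}E_n$ and its line-bundle part $P$, and decompose $\Ext^1_\X(T,T(\ell\w))$ into four blocks. The block between line bundles vanishes by (\ref{extension spaces between line bundles}), and the two mixed blocks vanish because each summand of $E$ and of $E(\ell\w)$ is an ACM bundle, so $\Ext^1(\text{ACM},\OO(\x))=0=\Ext^1(\OO(\x),\text{ACM})$ by (\ref{ACM bundle eq.}). Thus everything reduces to $\Ext^1(E,E(\ell\w))$, and since $E$ is a direct summand of the full tilting object $\widehat E=\bigoplus_{n=0}^{p-2}E_n$ of $\underline{\ACM}\,\X$ built in the proof of Proposition \ref{gen. condition}, it suffices to prove $\Ext^1(\widehat E,\widehat E(\ell\w))=0$. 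By (\ref{Rigid-domains equ.}) this equals $\underline{\Hom}(\widehat E,\widehat E(\ell\w)[1])$, a computation in the stable category.

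Next I transport this through the tilting equivalence $\underline{\ACM}\,\X\simeq\DDD^{\bo}(\mod H)$ afforded by $\widehat E$, where $H:=\underline{\End}(\widehat E)$ is hereditary. Under it $\widehat E$ becomes the free module $H$, and because the Serre functor $(\w)[2]$ of $\underline{\ACM}\,\X$ must match $\nu_H=\tau_H[1]$, the twist $(\w)$ corresponds to $\tau_H[-1]=\nu_H[-2]$. Therefore $\Ext^1(\widehat E,\widehat E(\ell\w))\cong H^{1-2\ell}(\nu_H^{\ell}H)$. For $\ell\ge1$, hereditariness forces $\nu_H^{\ell}H$ to have cohomology only in degrees $-(\ell-1),\dots,0$ (each application of $\nu_H$ widens the amplitude by at most one degree downward, and $\nu_H H=DH$ is a module), while $1-2\ell$ lies strictly below this range; together with the trivial case $\ell=0$ and the symmetry $\Ext^1(\widehat E,\widehat E(\ell\w))\cong D\Ext^1(\widehat E,\widehat E((1-\ell)\w))$ coming from Serre duality, this yields the vanishing for all $\ell$ and proves (a).

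For (b), if $i=0$ and $j=p-2$ then $T$ is a slice by Proposition \ref{special case}, so $\Hom_\X(T,T(m\w))=0$ for all $m>0$ by Definition \ref{def.slice}(b), and the criterion from the first paragraph makes $\Lambda$ a $2$-representation infinite algebra. If instead $i\neq0$ or $j\neq p-2$, then $V_{g,h}(E_i,E_j)$ contains a roof, i.e.\ a segment $[\x,\x-2\w]$ (recall $(p+2)\x_4=-2\w$), so both $\OO(\x)$ and $\OO(\x-2\w)$ are summands of $T$; taking them as source and target gives $\Hom_\X(\OO(\x),\OO(\x-2\w)(2\w))=\Hom_\X(\OO(\x),\OO(\x))\neq0$, hence $\Hom_\X(T,T(2\w))\neq0$ and $\Lambda$ fails to be $2$-representation infinite (so has global dimension $4$ by (a) and Proposition \ref{2-IF and 2-AIF}). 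The main obstacle is the rank-four vanishing $\Ext^1(\widehat E,\widehat E(\ell\w))=0$: the reductions around it are routine, but one must pass to the stable category and pin down both the correspondence $(\w)\leftrightarrow\tau_H[-1]$ and the cohomological amplitude of $\nu_H^{\ell}H$; once these are in place the result drops out from hereditariness.
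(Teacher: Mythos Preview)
Your argument is correct, and part (b) follows essentially the same line as the paper (the paper cites \cite[Proposition~7.14]{HIMO} for the forward direction rather than Definition~\ref{def.slice}(b), but the content is the same, and the backward direction via a roof in $V_{g,h}(E_i,E_j)$ is identical).

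The genuine difference is in the core step of (a), namely showing $\Ext^1(E,E(\ell\w))=0$ for the rank-four part $E=\bigoplus_{i\le n\le j}E_n$. The paper stays inside $\ACM\X$: it uses the identity $E(\x_3)\simeq E[1]$ together with the injective-hull sequence to rewrite $\Ext^1(E,E(\ell\w))$ as $\underline{\Hom}(E,E(\x_3+\ell\w))$, then kills this directly for $\ell\ge0$ from the $\Dom^+$ condition (which gives $\Hom(E_s,E_t(-\x_4))=0$) combined with Lemma~\ref{Hom-domains}. You instead enlarge $E$ to the slice $\widehat E$ built in Proposition~\ref{gen. condition}, transport through the stable tilting equivalence $\underline{\ACM}\,\X\simeq\DDD^{\bo}(\mod H)$ with $H$ hereditary, and reduce to the amplitude estimate $\nu_H^{\ell}H\in\DDD^{[-(\ell-1),0]}$, which places $1-2\ell$ outside the support. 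Both are short; the paper's route is more hands-on and uses only facts already established in Section~\ref{sec:Indecomposable ACM bundles of rank four}, while your route is more structural and would work verbatim for any rank-four summand sitting inside a slice of $\underline{\ACM}\,\X$, at the cost of importing the standard identifications (Serre functor $\leftrightarrow\nu_H$, amplitude bound for $\nu_H^{\ell}$) from the hereditary side.
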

				\begin{proof}(a) The algebra $\Lambda$ is derived equivalent to the $2$-canonical algebra $T^{\ca}$, which has finite global dimension. Thus $\Lambda$ has finite global dimension. Moreover,  we have
					\begin{align*}
						H^j(\nu_2^{-i}(\Lambda))&\simeq 
						\Hom_{\DDD^{\bo}(\mod\Lambda)}(\Lambda,\nu_2^{-i}(\Lambda)[j]) \\
						&\simeq\Hom_{\DDD^{\bo}(\coh\X)}(T,T(-i\w)[j]) \ \ \ \ \ \   (\text{by (\ref{derived equivalence})} )\\
						&\simeq\Ext_{}^{j}(T,T(-i\w)) \ \ \ \ \ \  \ \ \ \ \ \ \ \  \ \ \  \ \ (T \in \coh \X )
					\end{align*}
					for any $i,j\in \Z$. 
					This is clearly zero for $j<0$. Since $\coh \X$ has global dimension $2$, this is zero for $j>2$. It remains to show that $H^1(\nu_2^{-i}(\Lambda))=0$ holds for any $i\in \Z$. Take a decomposition $T=E \oplus P$, where $P$ is a maximal projective direct summand of $T$. By (\ref{ACM bundle eq.}) and (\ref{extension spaces between line bundles}), we only need to prove that $\Ext^{1}(E,E(\ell\w))=0$ holds for any $\ell \in \Z$. 					
					Using Auslander-Reiten-Serre duality,  we can reduce our consideration to the case $\ell \ge 0$. 
					
					Note that $E(\x_3)\simeq E[1]$. 
					By applying $\Hom(E(-\ell\w),-)$ to the exact sequence $0\to E \to \mathfrak{I}(E) \xrightarrow{\phi} E(\x_3) \to 0$, and using a parallel argument as in the proof of Proposition \ref{Rigid-domains}, one can show that 
					\begin{align} \label{2-IF and 2-AIF ineq.}
						\Ext^{1}(E,E(\ell\w)) \simeq \underline{\Hom}(E,E(\x_3+\ell\w)).
					\end{align}		
								
					For  $\ell=0$, it follows directly from the rigidity of $E$ that
					(\ref{2-IF and 2-AIF ineq.}) equals zero. 
					
					For  $\ell>0$, we note that  for any $s,t$ with $i\le s\le t \le j$, $E_s \in \Dom^{+}(E_t)$ implies that $\Hom(E_s,E_t(-\x_4))=0$ by definition. Then by Proposition \ref{L-action on 2-extension bundles} we have $$\Hom(E,E(\x_3+\w))\simeq \Hom(E,E(-\x_4))=0.$$  It further follows from Lemma \ref{Hom-domains}(a) that $\Hom(E,E(\x_3+\ell\w))=0$ holds for $\ell>0$. Thus (\ref{2-IF and 2-AIF ineq.}) equals zero for $\ell>0$.  
					Hence the assertion follows.
					
					(b) Assume $i=0$ and $j=p-2$. By Proposition \ref{special case}, it follows that $T$ is a  $2$-tilting bundle on $\X$.  Moreover,  by \cite[Proposition 7.14]{HIMO}, we have that $\Lambda$ is a $2$-representation infinite algebra.
					
					Conversely, we assume for contradiction that $i\neq 0$ or $j\neq p-2$. In this case, we have $\mathcal{L}_{g}(E_i)\neq \emptyset$ or $\mathcal{R}_{h}(E_j)\neq \emptyset$. 
					 By Proposition \ref{2-IF and 2-AIF} and (a), we know that $\gl \Lambda\neq 2$ if and only if $\gl \Lambda= 4$. This is equivalent to that  $\Ext_{\Lambda}^{4}(D\Lambda,\Lambda)\neq 0$ holds by \cite[Observation 2.7]{HIMO}. On the other hand, we have
					\begin{align*}
						\Ext^{4}_{\Lambda}(D\Lambda,\Lambda)
						&\simeq\Hom_{\DDD^{\bo}(\mod \Lambda)}(\Lambda,\nu_2^{-1}(\Lambda)[2])\\
						&\simeq\Ext^2_{}(T,T(-\w))\ \ \ \ \ \ \ \ \ \ \ \ \  \ \ \ \  (\text{by (\ref{derived equivalence})} )\\
						&\simeq D\Hom_{}(T,T(2\w)).\ \ \ \ \ \ \ \ \ \ \ \  \ \,   (\text{by (\ref{Auslander-Reiten-Serre duality})} )
					\end{align*}
					This is nonzero since there exist $\x,\y\in \mathcal{L}_{g}(E_i)\cup \mathcal{R}_{h}(E_j)$ such that $\y-\x = -2\w$ and thus  $\gl \Lambda\neq 2$, a contradiction to our assumption. 	
				\end{proof}
					
			\begin{example} Let $\X$ be a GL projective plane of type $(2,2,2,4)$. As in Example \ref{example 2}, the object $T$ given in (\ref{Texample 2}) is  tilting bundle on $\X$ by Theorem \ref{main theorem}. 
				Here is the quiver of the endomorphism algebra $\End (T)$.	
			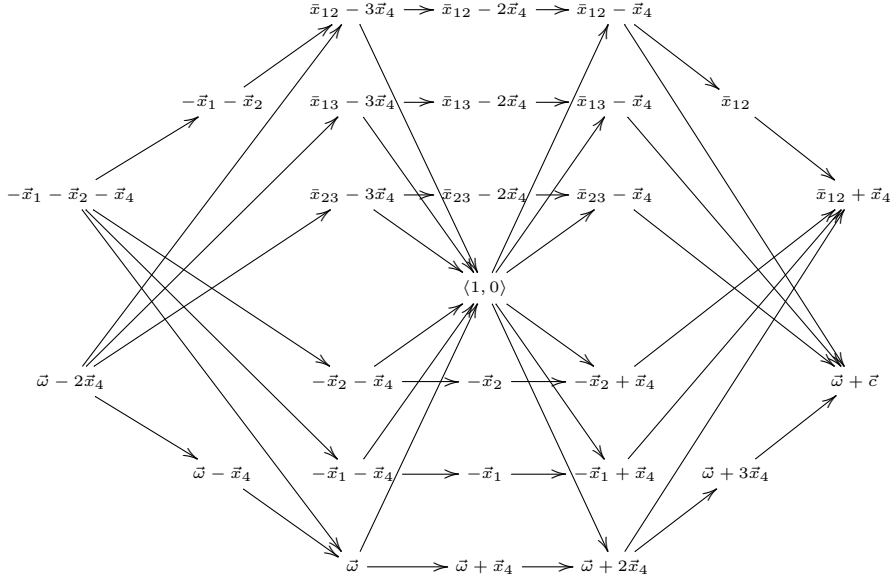
\begin{figure}[H]
				{\tiny\xymatrix@R=3.2em@C=1.5em{						
					&&\bar{x}_{12}-3\x_4 \ar[r] \ar[rddd] & \bar{x}_{12}-2\x_4 \ar[r] & \bar{x}_{12}-\x_4 \ar[rrdddd] \ar[rd] \\
					& -\x_1-\x_2 \ar[ru] & \bar{x}_{13}-3\x_4 \ar[rdd] \ar[r] & \bar{x}_{13}-2\x_4 \ar[r] & \bar{x}_{13}-\x_4 \ar[rrddd] & \bar{x}_{12}\ar[rd] \\
					-\x_1-\x_2-\x_4 \ar[rrdd] \ar[rrddd] \ar[rrdddd]\ar[ru]&& \bar{x}_{23}-3\x_4 \ar[rd] \ar[r] & \bar{x}_{23}-2\x_4 \ar[r] & \bar{x}_{23}-\x_4 \ar[rrdd] &&\bar{x}_{12}+\x_4 \\
					&&&\langle 1,0 \rangle \ar[ruuu] \ar[ruu] \ar[ru] \ar[rddd] \ar[rdd] \ar[rd] &&& \\
					\w-2\x_4 \ar[rruu] \ar[rruuu] \ar[rruuuu]\ar[rd] &&-\x_2-\x_4 \ar[ru] \ar[r] & -\x_2 \ar[r] & -\x_2+\x_4 \ar[rruu] & & \w+\c \\
					& \w-\x_4 \ar[rd] &-\x_1-\x_4 \ar[ruu]\ar[r] &-\x_1 \ar[r] &-\x_1+\x_4 \ar[rruuu] & \w+3\x_4 \ar[ru] \\
					&&\w \ar[ruuu]\ar[r] &\w+\x_4 \ar[r] &\w+2\x_4 \ar[rruuuu] \ar[ru] \\
					}} \caption{The endomorphism algebra of $T$ given in (\ref{Texample 2})} 
				\end{figure}
			\end{example} 
				
			\noindent {\bf Acknowledgements.} Jianmin Chen and Weikang Weng were partially supported by the National Natural Science Foundation of China (Nos. 12371040 and 12131018). Shiquan Ruan was partially supported by Fujian Provincial Natural Science Foundation of China (No. 2024J010006) and the National Natural Science Foundation of China (No. 12271448).

				\vskip 5pt
			\noindent {\scriptsize   \noindent Jianmin Chen, Shiquan Ruan and Weikang Weng\\
				School of Mathematical Sciences, \\
				Xiamen University, Xiamen, 361005, Fujian, PR China.\\
				E-mails: chenjianmin@xmu.edu.cn, sqruan@xmu.edu.cn,
				wkweng@stu.xmu.edu.cn\\ }
			\vskip 3pt
				
			\end{document}